\documentclass[reqno]{amsart}
\usepackage{url}
\usepackage{amssymb,amsthm,amsfonts,amstext}
\usepackage{amsmath}
\usepackage{mathptmx}       
\usepackage{helvet}         
\usepackage{courier}        
\usepackage{graphicx}
\usepackage{enumerate}
\usepackage[active]{srcltx}
\usepackage{mathrsfs}

\usepackage{tikz}
\usetikzlibrary{backgrounds}
\usetikzlibrary{patterns,fadings}
\usetikzlibrary{arrows,decorations.pathmorphing}
\usetikzlibrary{calc}
\definecolor{light-gray}{gray}{0.95}
\usepackage{graphicx}
\usepackage{epsfig}
\usetikzlibrary{shapes}


\usepackage{color}




\newcommand\ve{\varepsilon}

\newtheorem{theorem}{Theorem}[section]
\newtheorem{lemma}[theorem]{Lemma}
\newtheorem{proposition}[theorem]{Proposition}

\numberwithin{equation}{section}

\newcommand{\mc}[1]{{\mathcal #1}}

\newcommand{\bb}[1]{{\mathbb #1}}

\renewcommand{\epsilon}{\varepsilon}

\newcommand{\RR}{\mathbb R}





\newcommand{\bM}{\mathbf{M}}


\let\ve=\varepsilon

\let\ve=\varepsilon

\let\D=\Delta



\begin{document}

\author{C\'edric Bernardin}

\address{\noindent
Universit\'e de Lyon and CNRS, UMPA, UMR-CNRS 5669, ENS-Lyon,
46, all\'ee d'Italie, 69364 Lyon Cedex 07 - France.
\newline e-mail: \rm \texttt{cedric.bernardin@ens-lyon.fr}
}

\author{Patr\'icia Gon\c{c}alves}

\address{\noindent
 Departamento de Matem\'atica, PUC-RIO, Rua Marqu\^es de S\~ao Vicente, no. 225, 22453-900, Rio de Janeiro, Rj-Brazil and CMAT, Centro de Matem\'atica da Universidade do Minho, Campus de Gualtar, 4710-057 Braga, Portugal.\newline e-mail: \rm \texttt{patricia@mat.puc-rio.br and patg@math.uminho.pt}}

\author{Claudio Landim}

\address{\noindent IMPA, Estrada Dona Castorina 110, CEP 22460 Rio de
  Janeiro, Brasil and CNRS UMR 6085, Universit\'e de Rouen, Avenue de
  l'Universit\'e, BP.12, Technop\^ole du Madril\-let, F76801
  Saint-\'Etienne-du-Rouvray, France.  \newline e-mail: \rm
  \texttt{landim@impa.br} }
\title[]{Entropy of non-equilibrium stationary measures of boundary driven TASEP}

\noindent\keywords{Non-equilibrium stationary states, phase transitions, large deviations,
  quasi-potential, boundary driven asymmetric exclusion processes}

\begin{abstract}
  We examine the entropy of non-equilibrium stationary states  of boundary driven totally asymmetric simple exclusion processes. As a consequence, we obtain that the Gibbs-Shannon entropy of the non equilibrium stationary state converges to the Gibbs-Shannon entropy of the local equilibrium state. Moreover, we prove that its fluctuations are Gaussian, except when the mean displacement of particles produced by the bulk dynamics agrees  with the particle flux induced by the density reservoirs in the maximal phase regime.
\end{abstract}

\maketitle
\thispagestyle{empty}

\section{Introduction}

Nonequilibrium stationary states (NESS) maintained by systems in
contact with infinite reservoirs at the boundaries have attracted much
attention in these last years.  In analogy with the usual Boltzmann
entropy for equilibrium stationary states, we introduced in \cite{BL}
the entropy function of NESS and we computed it explicitly in the case
of the boundary driven symmetric simple exclusion process. In the
present paper we extend this work to the boundary driven totally
asymmetric simple exclusion process (TASEP) and we show that the
entropy function detects phase transitions.

The boundary driven asymmetric simple exclusion process is defined as follows. Let $p=1-q \in [0,1] \ne 1/2$ and $0< \rho_- < \rho_+ <1$. The microstates are described by the vectors $\eta=(\eta_{-N}, \ldots, \eta_{N})\in \Omega_N:=\{0,1\}^{\{-N, \ldots, N\}}$ where for $x\in\{-N,\cdots, N\}$, $\eta_x =1$ if the site $x$ is occupied and $\eta_x=0$ if the site $x$ is empty. In the bulk of the system, each particle, independently from the others, performs a nearest-neighbor asymmetric random walk, where jumps to the right (resp. left) neighboring site occur at rate $p$ (resp. rate $q$), with the convention that each time a particle attempts to jump to a site already occupied, the jump is suppressed in order to respect the exclusion constrain. At the two boundaries the dynamics is modified to mimic the coupling with reservoirs of particles: if the site $-N$ is empty (resp. occupied), a particle is injected at rate $\alpha$ (resp. removed at rate $\gamma$); similarly, if the site $N$ is empty, a particle is injected at rate $\delta$ (resp. removed at rate $\beta$). For any sites $x \ne y$, we denote by $\sigma^{x,y} \eta$ (resp. $\sigma^x (\eta)$) the configuration obtained from $ \eta \in \Omega_N$ by the exchange of the occupation variables $\eta_x$ and $\eta_y$ (resp. by the change of $\eta_x$ into $1-\eta_x$). The boundary driven (nearest neighbor) asymmetric simple exclusion process is the Markov process on $\Omega$ whose generator $L$ is given by
\begin{equation*}
L = L_{0} + L_{-} +L_{+},
\end{equation*}
where $L_{0},L_{-},L_{+}$ act on functions $f:\Omega_N \to \RR$ as follows
\begin{equation*}
\begin{split}
&(L_{0} f)(\eta) = \sum_{x=-N}^{N-1} \left\{p \eta_x (1-\eta_{x+1}) + q \eta_{x+1} (1- \eta_x) \right\}\left[ f (\sigma^{x,x+1} \eta) -f(\eta)\right],\\
&(L_{-} f)(\eta)= c_{-}(\eta_{-N}) \left[ f(\sigma^{-N} \eta) -f (\eta)\right], \quad (L_{+} f)(\eta)= c_{+}(\eta_{N}) \left[ f(\sigma^{N} \eta) -f (\eta)\right]
\end{split}
\end{equation*}
with $c_{\pm} : \Omega_N \to [0,+\infty)$ given by
$$c_{-} (\eta) = \alpha (1-\eta_{-N}) + \gamma \eta_{-N} , \quad c_+ (\eta) =\delta (1-\eta_N) + \beta \eta_N.$$

The density of the left (resp. right) reservoir is denoted by $\rho_-$ (resp. $\rho_+$) and can be explicitly computed as a function of $p,q,\alpha, \gamma$ (resp. $p,q,\beta,\delta$). For simplicity we will focus only on the totally asymmetric simple exclusion process (TASEP) which corresponds to $p=0$ or $p=1$. Furthermore, if $p=1-q=1$ we take $\alpha=\rho_-$, $\beta=1-\rho_+$, $\gamma =\delta=0$. If $p=1-q=0$, we take $\delta=\rho_+$, $\gamma= 1-\rho_-$ and $\alpha=\beta=0$. Since $\rho_- < \rho_+$ the reservoirs induce a flux of particles from the right to the left. On the other hand the bulk dynamics produces a mean displacement of the particles with a drift equal to $(p-q)$.  For $p=0$  both effects cooperate to push the particles to the left and we call the corresponding system  the {\textit{cooperative}} TASEP. If $p=1$ the two effects push the particles in opposite directions and we call the corresponding system the {\textit{competitive}} TASEP.

The unique non-equilibrium stationary state of the boundary driven TASEP  is denoted by $\mu_{ss,N}$. In the case $\rho_- =\rho_+=\rho \in (0,1)$, $\mu_{ss,N}$ is given by the Bernoulli product measure $\nu_{\rho}$ on $\Omega_N$. In the non-equilibrium situation, the steady state has a lot of non-trivial interesting properties.  The phase diagram for the average density ${\bar \rho}$ is well known and one can distinguish three phases: the high-density phase (HD) for which ${\bar \rho}=\rho_+$, the low density phase (LD) for which ${\bar \rho}=\rho_-$ and the maximal current phase (MC) where ${\bar \rho}=1/2$, see \cite{DLS3}. The transition lines between these phases are second order phase transitions except for the boundary $\rho_- + \rho_+ =1$ in the competitive case where the transition is of first order. On this line, the typical configurations are shocks between LD phase with density $\rho_-$ at the left of the shock and HD phase with density $\rho_+$ at the right of the shock. The position of the shock is uniformly distributed along the system and the average profile ${\bar \rho} (x)$ is given by ${\bar \rho}(x)=\rho_- {\bf 1}\{x \le 0\} + {\rho_+} {\bf 1}\{x \ge 0\}$. This is summarized in Figure \ref{fig:pddens}.
\begin{center}
\begin{figure}[h!]
\label{fig:pddens}
\begin{tabular}{ccc}
\begin{tikzpicture}[scale=0.3]
\draw[->,>=latex] (0,0) -- (13,0);
\draw[-,>=latex,dashed] (0,10) -- (10,10);
\draw[-,>=latex,dashed] (0,5) -- (5,5) -- (5,10);
\draw[->,>=latex] (0,0) -- (0,13);
\draw (0,12) node[left]{$\rho_+$};
\draw (12,0) node[below]{$\rho_-$};
\draw (0,0) node[below]{$0$};
\draw (0,0) node[left]{$0$};
\draw (10,0) node[below]{$1$};
\draw (0,10) node[left]{$1$};
\fill (0,0) -- (10,0) -- (10,10) -- cycle;
\draw (1.25, 3.75) node[right] {${\rho_+}$};
\draw (1.5,7.5) node[right]{$\frac{1}{2}$};
\draw (6.25, 8.75) node[right] {${\rho_-}$};
\end{tikzpicture}
&
\phantom{aaa}
&
\begin{tikzpicture}[scale=0.3]
\draw[->,>=latex] (0,0) -- (13,0);
\draw[-,>=latex,dashed] (0,10) -- (10,10);
\draw[-,>=latex,dashed] (0,10) -- (5,5);
\draw[->,>=latex] (0,0) -- (0,13);
\draw (0,12) node[left]{$\rho_+$};
\draw (12,0) node[below]{$\rho_-$};
\draw (0,0) node[below]{$0$};
\draw (0,0) node[left]{$0$};
\draw (10,0) node[below]{$1$};
\draw (0,10) node[left]{$1$};
\fill (0,0) -- (10,0) -- (10,10) -- cycle;
\draw (2.5, 5) node[left] {${\rho_-}$};
\draw (5,7.5) node[right] {${\rho_+}$};

\end{tikzpicture}
\end{tabular}
\caption{The phase diagram for the cooperative TASEP (left) and the competitive TASEP (right).}
\end{figure}
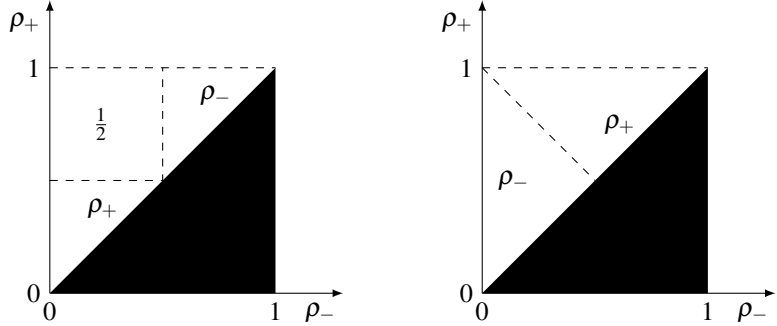
\end{center}
The entropy function of $\mu_{ss,N}$ introduced in \cite{BL} is the function $S^{p,q}_{\rho_-, \rho_+}: [0,+\infty) \to [0,\log(2)] \cup \{-\infty\}$ defined by
\begin{equation*}
S (E)= \lim_{\delta \to 0} \lim_{N \to + \infty} \frac{1}{2N +1} \log \Big(\sum_{\eta \in \Omega_N} \, {\bf 1} \Big\{\left| \frac{1}{2N +1} \log( \mu_{ss,N} (\eta)) + E\right| \le \delta \Big\}\Big)
\end{equation*}
if the limit exists.

Observe that $J_{\rho_-, \rho_+}^{p,q} (E) = E- S_{\rho_-, \rho_+}^{p,q} (E) \ge 0$ coincides with the large deviations function of the random variables
$Y_N (\eta):=-\cfrac{1}{2N +1} \log(\mu_{ss,N} (\eta))$ under the probability measure $\mu_{ss,N}$. Therefore, the (concave) Legendre transform of the entropy function $S:=
S_{\rho_-, \rho_+}^{p,q}$,
\begin{equation}
\label{eq:pressure-def}
P(\theta):=P_{\rho_-, \rho_+}^{p,q} (\theta) =\inf_{E \ge 0} \{\theta E -S (E) \},
\end{equation}
that we call the {\textit{ pressure}}, is by the Laplace-Varadhan theorem simply related to the cumulant generating function of the random variables $\{Y_N\}_N$, i.e.
\begin{equation}
\label{eq:pressure-cumul}
\begin{split}
P (\theta)&= - \lim_{N \to + \infty} \cfrac{1}{2N+1} \log \left( \int e^{(2N+1) (1-\theta) Y_N(\eta)}\mu_{ss,N} (d\eta)\right)\\
&= - \lim_{N \to + \infty} \cfrac{1}{2N+1} \log \left( \sum_{\eta \in \Omega_N} (\mu_{ss,N} (\eta) )^{\theta}\right).
\end{split}
\end{equation}

In the equilibrium case $\rho_- =\rho_+=\rho \in (0,1)$, denoting by
\begin{equation}\label{chemical potential}
\varphi = \log(\frac{\rho}{1-\rho}) \in \RR
\end{equation}
 the corresponding chemical potential, it is easy to show that the entropy function is given by
\begin{equation}
\label{eq:entropysimple}
S_{\rho,\rho} (E) = -s \left( \frac{-E +\log (1+e^{\varphi})}{\varphi}\right)
\end{equation}
where $s(\theta)= \theta \log (\theta) + (1-\theta) \log (1-\theta)$. The pressure $P(\varphi, \theta)$ is then given by
\begin{equation}
 \label{Pfunction}
P(\varphi,\theta):=\theta \log (1 +e^{\varphi}) - \log (1+e^{\varphi \theta}).
\end{equation}

In the non-equilibrium case $\rho_- \ne \rho_+$, since $\mu_{ss,N}$ has not a simple form, the computation of the entropy function is much more difficult.  It has
been proved in \cite{BL} that if a strong form of local equilibrium holds (see Section \ref{sec:ls} for a precise definition), then
the entropy function $S:=S^{p,q}_{\rho_-, \rho_+}$ can be expressed in a variational form involving the non-equilibrium free energy $V:=V_{\rho_-, \rho_+}^{p,q}$ and
 the Gibbs-Shannon entropy ${\bb S}$:
\begin{equation}
\label{eq:1.9}
S(E)= \sup_{\rho \in {\mc M}} \left\{ {\bb S} (\rho) \; ; \; V (\rho) +{\bb S} (\rho) =E \right\},
\end{equation}
where the set of density profiles $\mathcal{M}$ is defined in \eqref{set: profiles} and the Gibbs-Shannon entropy of the profile $\rho \in {\mc M}$ is defined by
\begin{equation} \label{eq: S}
{\bb S} (\rho) = - \frac{1}{2} \int_{-1}^1 s(\rho(x)) dx.
\end{equation}

The interval composed of the $E \in [0,+\infty)$ such that $S(E) \ne -\infty$ is called the \textit{energy band}. The bottom and the top of the energy band are defined respectively by
\begin{equation}\label{energy band}
E^-:=\inf_{\rho\in\mathcal{M}}\{\mathbb{S}(\rho)+V(\rho)\} \quad \textrm{and}\quad E^+:=\sup_{\rho\in\mathcal{M}}\{\mathbb{S}(\rho)+V(\rho)\}
\end{equation}

The non-equilibrium free energy is the large deviation function of the empirical density under $\mu_{ss,N}$. Its value does not depend on $p$ nor $q$ but only on the sign of $p-q$, and
 we denote it by $V^+$ if $p-q>0$ and by $V^-$ if $p-q<0$. The explicit computation of this functional has been obtained first in \cite{DLS3} and generalized to other systems in \cite{B2}.   Similarly, the entropy (resp. pressure) of the competitive TASEP is denoted by $S^+$ (resp. $P^+$) and the entropy (resp. pressure) of the cooperative TASEP by $S^-$ (resp. $P^-$). It follows easily from (\ref{eq:pressure-def}) and (\ref{eq:1.9}) that
\begin{equation*}
P(\theta)= \inf_{\rho \in {\mc M}} \left\{ \theta (V (\rho) +{\bb S} (\rho) ) -{\bb S} (\rho) \right\}.
\end{equation*}
This formula can also be obtained starting from (\ref{eq:pressure-cumul}) and using the local equilibrium statement as it is done in \cite{BL} for the entropy function.

In this paper we compute explicitly  $S^+$ and $S^-$ (resp. Theorem \ref{th:ent1} and Theorem \ref{th:ent2}) and  $P^+$ and $P_-$ (resp. Theorem \ref{th:pressure-competitive} and Theorem  \ref{th:pressure-cooperative}). From those results we deduce several interesting consequences (see Theorem \ref{th:cons1} and Theorem \ref{th:cons2}):
\begin{itemize}
\item We recover some results of \cite{B} for the TASEP, showing that the Gibbs-Shannon entropy of the non-equilibrium stationary state of the TASEP is the same, in the thermodynamic limit, as the Gibbs-Shannon  of the local Gibbs equilibrium measure, see Theorems \ref{th:cons1} and \ref{th:cons2}. In this case, the local Gibbs equilibrium measure is $\nu_{\bar\rho}$, namely, the Bernoulli product measure $\otimes_{x=-N}^N {\mc B}({\bar \rho}(x))$  where ${\mc B} ( r )$ is the one-site Bernoulli measure on $\{0,1\}$ with density $r$ and $\bar{\rho}$ is the stationary profile.
\item For the competitive TASEP, contrarily to what happens for the boundary driven symmetric simple exclusion  process (\cite{BL,DLS}),  the fluctuations are Gaussian with the same variance as the one given by the local equilibrium  state.
\item For the cooperative TASEP, the same occurs if $\rho_-,\rho_+ \le 1/2$ or if $\rho_-,\rho_+ \ge 1/2$. But in the MC phase $\rho_- \le 1/2\le \rho_+$, the fluctuations are not Gaussian. This is reminiscent of \cite{DEL, DLS3} where it is shown that the fluctuations of the density are non-Gaussian {\footnote{The non-Gaussian part of the fluctuations can be described in terms of the statistical properties of a Brownian excursion (\cite{DEL}).}}. 
\end{itemize}
Our last results concern the presence of phase transitions {\footnote{We refer the interested reader to \cite{T} for more informations about the implications of these facts from a physical viewpoint.}} for the competitive and the cooperative TASEP.  For the cooperative TASEP the function $S^-$ is a continuously differentiable concave function on its energy band but has linear parts. As a consequence the pressure function $P^-$ is a concave function with a discontinuous derivative. The function $P^-$ may also have a linear part due to the fact that the entropy $S^-$ does not necessarily vanish at the boundaries of the energy band.
If $\rho_- \le 1/2 \le \rho_+$, then the function $S^+$ is a smooth concave function on its energy band, but does not vanish at the top of the energy band. Consequently the pressure function $P^+$ is concave with a linear part on an infinite interval. If $\rho_-, \rho_+ \le 1/2$ or $\rho_-,\rho_+ \ge 1/2$, the entropy function $S^+$ has a discontinuity of its derivative at some point in the interior of the energy band but vanishes at the boundaries of the energy band. Then, the pressure function $P^+$ has a linear part on a finite interval.

It would be interesting to see how these results extend to other asymmetric systems for which the quasi-potential has been explicitly computed (\cite{B2}). The form of the entropy function obtained for the TASEP is relatively simple but follows from long computations. We did not succeed in giving a simple intuitive explanation to the final formulas obtained. We also notice that extending these results to a larger class of systems would require to prove the strong form of local equilibrium for them in order to get (\ref{eq:1.9}). This seems to be a difficult task.

The paper is organized as follows. In Section \ref{sec:compT} we obtain the entropy and the pressure functions for the competitive TASEP and deduce some consequences of these computations. In Section \ref{sec:coopT} we obtain similar results for the cooperative TASEP. The local equilibrium statement is proved in Section \ref{sec:ls}. Technical parts are postponed to the Appendix.

\section{Competitive TASEP}
\label{sec:compT}
In this section we derive the variational formula for the entropy function (\ref{eq:1.9}) for the competitive TASEP. Denote by $\chi(\rho)$, the mobility of the system,
that is $\chi:[0,1] \to [0,1]$ is defined by $\chi (\rho) =\rho (1-\rho)$. The chemical potential corresponding to $\rho_\pm$ is denoted by $\varphi_{\pm}$ and satisfies $\rho_{\pm} = e^{\varphi_{\pm}}/ (1+e^{\varphi_{\pm}})$, see \eqref{chemical potential}.

We consider the set ${\bb L}^{\infty} ([-1,1])$ equipped with the weak{$\star$} topology and ${\mc M}$ as the set
\begin{equation}\label{set: profiles}
{\mc M} = \left\{ \rho \in {\bb L}^{\infty} ([-1,1])\; : \; 0 \le \rho \le 1\right\}
\end{equation}
which is equipped with the relative topology. Denote by $\bar\rho$ the stationary density profile. We recall that $\bar \rho = \rho_-$ for $\rho_+ < 1-\rho_-$, i.e. $\varphi_+ < - \varphi_-$, $\bar \rho = \rho_+$ for $\rho_+ >1-\rho_-$, i.e. $\varphi_+ > - \varphi_-$ and
 ${\bar \rho} (x)= \rho_- {\bf 1}\{x \le 0\} + \rho_+ {\bf 1}\{x \ge 0\}$ if $\rho_- + \rho_+ =1$. Let $\bar \varphi= \sup(\varphi_+, - \varphi_-)$ so that
 $\bar\rho = e^{\bar\varphi}/({1+e^{\bar\varphi}})$ if $\rho_-+\rho_+ \ne 1$. Let
\[
{\Phi}=\Big\{ \varphi:=\varphi_y : x \in [-1,1] \to \varphi_{-} {\bf 1}\{-1\leq x<y\} + {\varphi_+} {\bf 1}\{y\leq x\leq 1\} \; ; \; y \in [-1,1]  \Big\}.
 \]
For $(\rho, \varphi) \in {\mc M} \times \Phi$ we define the functional
\begin{equation}\label{H function}
{\mc H} (\rho, \varphi) = \frac{1}{2}\int_{-1}^{1} \left[ (1-\rho(x)) \varphi(x) - \log (1+e^{\varphi(x)}) \right]dx.
\end{equation}
Then the quasi-potential of the competitive TASEP is given (\cite{DLS3}, \cite{B2}) by
\begin{equation*}
V^+ (\rho) = -{\bb S} (\rho) + \inf_{\varphi \in \Phi}{\mc H} (\rho, \varphi) - {\bar V}^+
\end{equation*}
where
\begin{equation*}
{\bar V}^+  =  -{\bb S} ({\bar \rho}) + \inf_{\varphi \in \Phi}  {\mc H} ({\bar \rho}, \varphi)
=\log\Big(\min_{\rho\in[\rho_{-},\rho_{+}]}\chi( \rho)\Big).
\end{equation*}

 Let us also introduce $\varphi_0 = \sup \{ |\varphi_-|, |\varphi_+|\}$ and $\rho_0 =e^{\varphi_0}/(1+e^{\varphi_0})$.\\

For each $E\ge 0$, $m \in [0,2]$ and $\varphi_-,\varphi_+$ we define
\begin{equation}\label{chi0}
\xi_0 := \cfrac{\log (1+e^{\varphi_+}) - \log (1+e^{\varphi_-}) }{\varphi_+ - \varphi_-} \in (0,1), \hspace{0.3cm} {\hat \xi}_0 :=\cfrac{\log (1+e^{\varphi_+}) + \log (1+e^{\varphi_-}) }{\varphi_+ - \varphi_-}.
\end{equation}

\subsection{Energy bands}

In this section we determine the energy band of the competitive TASEP. This is summarized in Figure \ref{fig:ebcomp}.

\begin{proposition} \label{EBcompetitive}
The bottom  of the energy band is given by
\begin{equation*}
E_{+\infty}^- = -{\bar V}^+ -\log (1+e^{\varphi_0}),
\end{equation*}
where $\varphi_0 = \sup \{ |\varphi_-|, |\varphi_+|\}$ and the top  of the energy band is given by
\begin{equation*}
E_{+\infty}^+ = -{\bar V}^+ +
\begin{cases}
\vspace{0.2cm}
\cfrac{\varphi_- \log (1+e^{\varphi_+}) - {\varphi_+} \log (1+e^{\varphi_-})}{\varphi_+ -\varphi_-}, \quad {\rho_- \le \frac{1}{2} \le \rho_+},\\
\vspace{0.2cm}
 - \log (1+e^{\varphi_+}), \quad \rho_- < \rho_+ \le \frac{1}{2},\\
\vspace{0.2cm}
 -\log (1+e^{- \varphi_-}), \quad \frac{1}{2} \le \rho_- < \rho_+.\\
\end{cases}
\end{equation*}
\end{proposition}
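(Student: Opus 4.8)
The plan is to express the two quantities of interest in terms of the functional $\mc H$ alone. Since the quasi-potential of the competitive TASEP satisfies $V^+(\rho)=-{\bb S}(\rho)+\inf_{\varphi\in\Phi}\mc H(\rho,\varphi)-{\bar V}^+$, the quantity optimized in \eqref{energy band} is ${\bb S}(\rho)+V^+(\rho)=\inf_{\varphi\in\Phi}\mc H(\rho,\varphi)-{\bar V}^+$, so that
\[
E_{+\infty}^-=-{\bar V}^+ +\inf_{\rho\in\mc M}\ \inf_{\varphi\in\Phi}\mc H(\rho,\varphi),\qquad
E_{+\infty}^+=-{\bar V}^+ +\sup_{\rho\in\mc M}\ \inf_{\varphi\in\Phi}\mc H(\rho,\varphi) .
\]
Everything then reduces to a double infimum and a sup--inf of $\mc H$ over $\mc M\times\Phi$.

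For the bottom of the band I would interchange the two infima and first minimize $\mc H(\rho,\varphi)$ over $\rho$ with $\varphi$ fixed. As $\mc H$ is affine in $\rho$, this is done pointwise; using $\min_{r\in[0,1]}\{(1-r)a-\log(1+e^a)\}=-\log(1+e^{|a|})$ gives $\inf_{\rho\in\mc M}\mc H(\rho,\varphi)=-\tfrac12\int_{-1}^1\log(1+e^{|\varphi(x)|})\,dx$. For $\varphi=\varphi_y\in\Phi$ this is affine in $y\in[-1,1]$, hence minimized at $y=-1$ or $y=1$; the two endpoint values are $-\log(1+e^{|\varphi_+|})$ and $-\log(1+e^{|\varphi_-|})$, and the smaller one equals $-\log(1+e^{\varphi_0})$ with $\varphi_0=\sup\{|\varphi_-|,|\varphi_+|\}$. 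This yields the stated formula for $E_{+\infty}^-$.

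The top of the band is a sup--inf, for which there is no automatic duality, and this is the main obstacle. For the upper bound I would test the inner infimum against two--point averages of the two extreme elements $\varphi_{-1},\varphi_1$ of $\Phi$ (which are, up to a null set, the constants $\varphi_+$ and $\varphi_-$): for every $g\in[0,1]$ and every $\rho\in\mc M$,
\[
\inf_{\varphi\in\Phi}\mc H(\rho,\varphi)\ \le\ g\,\mc H(\rho,\varphi_{-1})+(1-g)\,\mc H(\rho,\varphi_1)\ \le\ \sup_{\rho\in\mc M}\big[\,g\,\mc H(\rho,\varphi_{-1})+(1-g)\,\mc H(\rho,\varphi_1)\,\big]=:h(g) .
\]
Since $\mc H$ is affine in $\rho$ and $\varphi_{-1},\varphi_1$ are (essentially) constant, the last supremum is explicit:
\[
h(g)=\big(\varphi_-+g(\varphi_+-\varphi_-)\big)^+-\log(1+e^{\varphi_-})-g\,\big(\log(1+e^{\varphi_+})-\log(1+e^{\varphi_-})\big) ,
\]
so $E_{+\infty}^+\le-{\bar V}^+ +\min_{g\in[0,1]}h(g)$. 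A short case analysis on the signs of $\varphi_\pm$, i.e. on the positions of $\rho_\pm$ relative to $1/2$, locates the minimizer of $h$ on $[0,1]$: it is $g=0$ when $\tfrac12\le\rho_-$, $g=1$ when $\rho_+\le\tfrac12$, and $g=-\varphi_-/(\varphi_+-\varphi_-)$ when $\rho_-\le\tfrac12\le\rho_+$, with corresponding values $-\log(1+e^{-\varphi_-})$, $-\log(1+e^{\varphi_+})$, and $\big(\varphi_-\log(1+e^{\varphi_+})-\varphi_+\log(1+e^{\varphi_-})\big)/(\varphi_+-\varphi_-)$.

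For the matching lower bound I would exhibit in each case a constant profile attaining $\min_g h(g)$: $\rho\equiv 0$ when $\tfrac12\le\rho_-<\rho_+$, $\rho\equiv 1$ when $\rho_-<\rho_+\le\tfrac12$, and $\rho\equiv 1-\xi_0$ when $\rho_-\le\tfrac12\le\rho_+$ (this last profile belongs to $\mc M$ because $\xi_0\in(0,1)$ by \eqref{chi0}). For a constant profile the map $y\mapsto\mc H(\rho,\varphi_y)$ is affine; its $y$--coefficient vanishes identically for $\rho\equiv 1-\xi_0$ because $\xi_0(\varphi_+-\varphi_-)=\log(1+e^{\varphi_+})-\log(1+e^{\varphi_-})$, so $\mc H(1-\xi_0,\varphi_y)$ is constant and a direct computation gives the third value, while for $\rho\equiv 0$ and $\rho\equiv 1$ the infimum over $y\in[-1,1]$ is attained at $y=\pm1$ and reproduces the first two values. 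Combined with the upper bound this proves the formula for $E_{+\infty}^+$. The delicate point is indeed that upper bound: the two--point test is sharp precisely because $a\mapsto\log(1+e^a)$ is convex, so over-optimizing in $\rho$ after averaging over $y$ loses nothing.
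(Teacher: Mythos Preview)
Your argument is correct. For the bottom of the band you do exactly what the paper does: exchange the two infima, minimize pointwise in $\rho$, and observe that the resulting function of $y$ is affine. The paper writes the pointwise minimum as $\varphi_\pm\wedge 0-\log(1+e^{\varphi_\pm})$, which is the same as your $-\log(1+e^{|\varphi_\pm|})$.

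For the top of the band your route is genuinely different from the paper's. The paper introduces $H_\rho(x)=\int_{-1}^x(1-\rho)$ and the mass $m=H_\rho(1)$, rewrites $\inf_{\varphi\in\Phi}\mc H(\rho,\varphi)$ as an affine function of $m$ plus $\inf_{y}\{y\xi_0-H_\rho(y)\}$, computes the inner sup--inf ${\bb U}(m)=\sup_{H_\rho(1)=m}\inf_y\{y\xi_0-H_\rho(y)\}=\inf\{-\xi_0,\xi_0-m\}$ by exhibiting piecewise linear candidates for $H_\rho$, and then maximizes a piecewise linear function of $m$. You instead use a two--point minimax: since $\varphi_{-1},\varphi_1\in\Phi$, the inner infimum is bounded above by any convex combination of $\mc H(\rho,\varphi_{-1})$ and $\mc H(\rho,\varphi_1)$; taking $\sup_\rho$ and then $\min_g$ gives the explicit piecewise linear function $h(g)$; the matching lower bound comes from constant profiles, for which $y\mapsto\mc H(\rho,\varphi_y)$ is genuinely affine and hence its infimum over $\Phi$ equals the minimum at the endpoints. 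Your approach is shorter and more transparent, and cleanly isolates why constant profiles are optimal. The paper's approach, while longer here, has the advantage that the objects $H_\rho$, $m_\rho$, $y_\rho$ and the reduction to a sup--inf over $y$ are exactly what is needed later in the proof of Proposition~\ref{prop:ent1}, so they are not throwaway computations. One small comment: your closing sentence attributing the sharpness of the two--point test to the convexity of $a\mapsto\log(1+e^a)$ is not quite the reason; the sharpness comes from the affineness of $y\mapsto\mc H(c,\varphi_y)$ for constant $c$, which makes the inf over $\Phi$ land at an endpoint and hence saturate your upper bound.
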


\begin{center}
\begin{figure}[h!]
\label{fig:eb-competitive}
\begin{tabular}{ccc}
\begin{tikzpicture}[scale=0.3]
\draw[->,>=latex] (0,0) -- (13,0);
\draw[-,>=latex,dashed] (0,10) -- (10,10);
\draw[-,>=latex,dashed] (0,10) -- (5,5);
\draw[->,>=latex] (0,0) -- (0,13);
\draw (0,12) node[left]{$\rho_+$};
\draw (12,0) node[below]{$\rho_-$};
\draw (0,0) node[below]{$0$};
\draw (0,0) node[left]{$0$};
\draw (10,0) node[below]{$1$};
\draw (0,10) node[left]{$1$};
\fill (0,0) -- (10,0) -- (10,10) -- cycle;
\draw (1.25, 5) node[right] {${\underline A}$};
\draw (6.25, 7.5) node[left] {${\underline B}$};
\draw (14, 12) node[left] {${ E^-_{+\infty}}$};
\end{tikzpicture}

&
\phantom{aaa}
&
\begin{tikzpicture}[scale=0.3]
\draw[->,>=latex] (0,0) -- (13,0);
\draw[-,>=latex,dashed] (0,10) -- (10,10);
\draw[-,>=latex,dashed] (0,5) -- (5,5) -- (5,10);
\draw[-,>=latex,dashed] (0,10) -- (5,5);
\draw[->,>=latex] (0,0) -- (0,13);
\draw (0,12) node[left]{$\rho_+$};
\draw (12,0) node[below]{$\rho_-$};
\draw (0,0) node[below]{$0$};
\draw (0,0) node[left]{$0$};
\draw (10,0) node[below]{$1$};
\draw (0,10) node[left]{$1$};
\fill (0,0) -- (10,0) -- (10,10) -- cycle;
\draw (1.25, 3.75) node[right] {${\bar A}$};
\draw (1.25, 6.25) node[right] {${\bar B}$};
\draw (2.5, 8.75) node[right] {${\bar C}$};
\draw (6.25, 8.75) node[right] {${\bar D}$};
\draw (11, 12) node[right] {${E^+_{+\infty}}$};
\end{tikzpicture}

\end{tabular}
\caption{The phase diagram for the bottom of the energy band (left) and the top of the energy band (right) for the competitive TASEP. We have ${\underline A}=-\log (1-\rho_-)$, $\underline B= -\log (\rho_+)$ and $\bar A = -\log (\chi(\rho_-)) +\log (1-\rho_+)$, $\bar B= \frac{\varphi_-}{\varphi_+ -\varphi_-} \log \left( \frac{1-\rho_-}{1-\rho_+}\right) -\log \rho_-$, $\bar C=  \frac{\varphi_+}{\varphi_+ -\varphi_-} \log \left( \frac{1-\rho_-}{1-\rho_+}\right) -\log \rho_+$, $\bar D= -\log (\chi(\rho_+)) +\log (\rho_-)$. All transitions are of first order.}
\label{fig:ebcomp}
\end{figure}
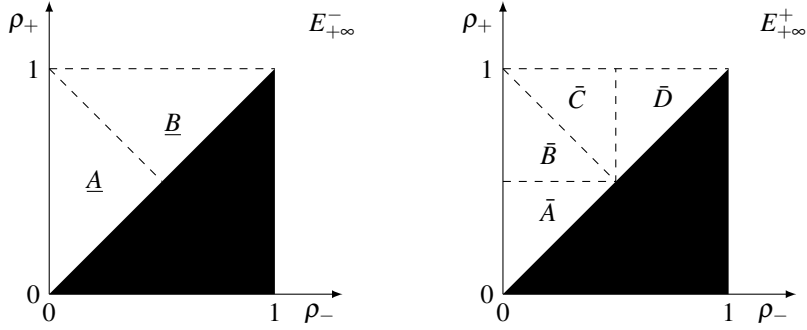
\end{center}

\subsection{Entropy}

Now we compute the entropy function. We introduce
\begin{equation}\label{W function dep rho}
W(\rho_-, \rho_+) =\cfrac{\log(\rho_+)\log(1-\rho_-)-\log(\rho_-)\log(1-\rho_+) }{\log\Big(\frac {\rho_+(1-\rho_-)}{\rho_-(1-\rho_+)}\Big)}
\end{equation}
which corresponds to
\begin{equation} \label{W function}
W(\varphi_-, \varphi_+) =\cfrac{\varphi_- \log (1+e^{\varphi_+}) -\varphi_+ \log (1+e^{\varphi_-}) }{\varphi_+ - \varphi_-}
\end{equation}
and coincides with the first coordinate of one of the (possible) two intersection points of the curves $x \to S_{\rho_+,\rho_+} (-x)$ and $x \to S_{\rho_-,\rho_-} (-x)$, where $S_{\rho,\rho}(\cdot)$ is defined in \eqref{eq:entropysimple}.\\

\begin{theorem}
\label{th:ent1}
The restriction of the entropy function $S^+$ on the energy band $[E_{+\infty}^- \, ; \, E_{+\infty}^+]$ is given by
\begin{equation*}
S^+ (E) =
\begin{cases}
&S_{\rho_0, \rho_0} (-(E+{\bar V}^+)), \quad   \rho_-\leq{\frac{1}{2}}\leq \rho_+,\\
&\\
&S_{\rho_-, \rho_-} (-(E+{\bar V}^+)) \, {\bf 1}\{(E+{\bar V}^+) \le W(\rho_-, \rho_+)\} \\
&+ S_{\rho_+, \rho_+} (-(E+{\bar V}^+)) \,  {\bf 1}\{(E+{\bar V}^+) > W(\rho_-, \rho_+)\}, \quad \rho_- < \rho_+ \leq \frac{1}{2},\\
&\\
&S_{\rho_-, \rho_-} (-(E+{\bar V}^+)) \, {\bf 1}\{(E+{\bar V}^+) \ge W(\rho_-, \rho_+)\}  \\
 &+ S_{\rho_+, \rho_+} (-(E+{\bar V}^+)) \,  {\bf 1}\{(E+{\bar V}^+) < W(\rho_-, \rho_+)\}, \quad \frac{1}{2}\leq  \rho_- < \rho_+,\\
\end{cases}
\end{equation*}
and is a concave function.  Therefore, when $\rho_-\leq{\frac{1}{2}}\leq \rho_+$, its derivative $(S^+)'$ is continuous on the energy band, but in the remaining cases $(S^+)'$ is  continuous except where $E+{\bar V}^+= W(\rho_-, \rho_+)$.

The supremum in the definition of $S^+(E)$, see \eqref{eq:1.9}, for $E\in [E_{+\infty}^- \, ; \, E_{+\infty}^+]$ is realized for a unique profile whose value is given by
\begin{equation*}
\begin{cases}
&u_{\bar{\rho}}, \quad   \rho_-\leq{\frac{1}{2}}\leq \rho_+,\\
&\\
&u_{\rho_-} \, {\bf 1}\{(E+{\bar V}^+) \le W(\rho_-, \rho_+)\} \\
&+ u_{\rho_+} \,  {\bf 1}\{(E+{\bar V}^+) > W(\rho_-, \rho_+)\}, \quad \rho_- < \rho_+ \leq \frac{1}{2},\\
&\\
&u_{\rho_-} \, {\bf 1}\{(E+{\bar V}^+) \ge W(\rho_-, \rho_+)\}  \\
 &+ u_{\rho_+}  {\bf 1}\{(E+{\bar V}^+) < W(\rho_-, \rho_+)\}, \quad \frac{1}{2}\leq  \rho_- < \rho_+,\\
\end{cases}
\end{equation*}
where for any $\rho$, the profile $u_\rho$ is the constant profile equal  to $\frac{\log(\rho)-(E+\bar {V}^+)}{\log(\rho)-\log(1-\rho)}$.
\end{theorem}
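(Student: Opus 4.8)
The plan is to reduce the variational formula \eqref{eq:1.9} to a constrained maximisation of the Gibbs--Shannon entropy alone, to strip the constraint down to a convex one, to solve the resulting problem through its Euler--Lagrange equations, and to read off the three regimes and the concavity from the shape of the extremal profiles. First, since $V^+(\rho)+\bb S(\rho)=\inf_{\varphi\in\Phi}\mc H(\rho,\varphi)-{\bar V}^+$, the constraint $V^+(\rho)+\bb S(\rho)=E$ becomes $\inf_{\varphi\in\Phi}\mc H(\rho,\varphi)=E+{\bar V}^+=:c$, so that
\[
S^+(E)=\sup\Big\{\,\bb S(\rho)\ :\ \rho\in\mc M,\ g(\rho)=c\,\Big\},\qquad g(\rho):=\inf_{\varphi\in\Phi}\mc H(\rho,\varphi).
\]
For $\varphi=\varphi_y\in\Phi$ the map $\rho\mapsto\mc H(\rho,\varphi_y)$ is affine and depends on $\rho$ only through $\int_{-1}^{y}\rho$ and $\int_y^1\rho$; hence $g$ is concave and weak$\star$ upper semicontinuous, while $\bb S$ is strictly concave with unique maximiser $\rho\equiv\tfrac12$. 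For a constant profile $\rho\equiv\theta$ one computes $g(\theta)=\min\{(1-\theta)\varphi_--\log(1+e^{\varphi_-}),\,(1-\theta)\varphi_+-\log(1+e^{\varphi_+})\}$, the two affine branches crossing at $\theta=1-\xi_0$ where $g$ is maximal over constants, with value $W(\varphi_-,\varphi_+)$; and $\partial_y\mc H(\rho,\varphi_y)$ vanishes exactly where $\rho(y)=1-\xi_0$.

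Next I would remove the infimum from the constraint. Put $c_\star:=g(\tfrac12)=\tfrac12\log\min\{\chi(\rho_-),\chi(\rho_+)\}$. If $c=c_\star$ then $\rho\equiv\tfrac12$ is admissible and $S^+(E)=\log 2$, matching the stated formula there. If $c>c_\star$, a line-segment argument gives $S^+(E)=\sup\{\bb S(\rho):g(\rho)\ge c\}$: for $\rho$ with $g(\rho)>c$, along $[\tfrac12,\rho]$ the concave function $g$ reaches the level $c$ at a profile $\rho'$ that, by concavity, still satisfies $g\ge c$ beyond it and has $\bb S(\rho')\ge\bb S(\rho)$ as it is closer to $\tfrac12$. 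Dually, if $c<c_\star$ then $S^+(E)=\sup\{\bb S(\rho):g(\rho)\le c\}=\sup_{y\in[-1,1]}\sup\{\bb S(\rho):\mc H(\rho,\varphi_y)\le c\}$, a supremum of concave maximisations over half-spaces. In either case the feasible set has become convex, or a finite union of half-spaces.

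Then I would write the Euler--Lagrange equations. On $\{g\ge c\}$ (which for $c>c_\star$ excludes $\tfrac12$) the unique maximiser $\rho^\star$ lies on $\{g=c\}$, and the KKT conditions provide a nonnegative measure $\lambda$ carried by the active set $A:=\{y:\mc H(\rho^\star,\varphi_y)=c\}$ with $s'(\rho^\star(x))=\int\varphi_y(x)\,\lambda(dy)$ a.e. Since $\varphi_y(x)=\varphi_-\mathbf 1\{x<y\}+\varphi_+\mathbf 1\{x\ge y\}$ with $\varphi_+>\varphi_-$, the right-hand side is nondecreasing in $x$, so $\rho^\star$ is nondecreasing, and because $\partial_y\mc H(\rho^\star,\varphi_y)$ vanishes only where $\rho^\star(y)=1-\xi_0$, the set $A$ can only be a single endpoint of $[-1,1]$ — then $\rho^\star$ is constant, $\rho^\star\equiv e^{\lambda\varphi_\pm}/(1+e^{\lambda\varphi_\pm})$ — or a single interior point $y_0$ — then $\rho^\star=\theta_-\mathbf 1_{[-1,y_0]}+\theta_+\mathbf 1_{[y_0,1]}$ with $\theta_\pm=e^{\lambda\varphi_\pm}/(1+e^{\lambda\varphi_\pm})$ and $\theta_-<1-\xi_0<\theta_+$. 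The same computation treats each half-space problem, so it remains to maximise $-a\,s(\theta_-)-(1-a)\,s(\theta_+)$ over $a\in[0,1]$ and $\lambda\in\RR$ under $a\big[(1-\theta_-)\varphi_--\log(1+e^{\varphi_-})\big]+(1-a)\big[(1-\theta_+)\varphi_+-\log(1+e^{\varphi_+})\big]=c$. Using $s'(\theta_\pm)=\lambda\varphi_\pm$, for fixed $\lambda$ both objective and constraint are affine in $a$, so the optimum is attained at $\lambda=1$ and $a\in\{0,1\}$, i.e.\ at the constant profile $\equiv u_{\rho_-}$ or $\equiv u_{\rho_+}$ (the interior-jump profiles being dominated). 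One checks that $g(u_{\rho_-})=c$ exactly when $u_{\rho_-}\le1-\xi_0$, i.e.\ $c\le W(\varphi_-,\varphi_+)$, and $g(u_{\rho_+})=c$ exactly when $c\ge W(\varphi_-,\varphi_+)$; deciding which of $u_{\rho_-},u_{\rho_+}$ is admissible and closer to $\tfrac12$ (when $\rho_-\le\tfrac12\le\rho_+$ exactly one lies in $[0,1]$ and equals $u_{\bar\rho}$, while if $\rho_\pm\le\tfrac12$ or $\rho_\pm\ge\tfrac12$ the closer one flips at $c=W$) gives the case split. Since $\bb S$ of $\equiv u_\rho$ equals $-s(u_\rho)=S_{\rho,\rho}(-c)$ and $u_{\rho_-}=1-u_{\rho_0}$ when $\rho_-\le\tfrac12\le\rho_+$, this yields the asserted formula and optimal profile; concavity of $S^+$ and the discontinuity of $(S^+)'$ at $c=W$ follow from concavity of each branch $c\mapsto S_{\rho,\rho}(-c)$ and the correct ordering of the one-sided derivatives there, which uses $\xi_0<\tfrac12$ for $\rho_\pm\le\tfrac12$ and $\xi_0>\tfrac12$ for $\rho_\pm\ge\tfrac12$.

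I expect the main obstacle to be the Euler--Lagrange step — showing the active set $A$ cannot be a nondegenerate interval, so that the extremal profile is always (piecewise) constant of the exponential form above — together with the sign bookkeeping in the finite-dimensional problem (the signs of $\varphi_\pm$, and whether the candidate density remains in $[0,1]$), which is exactly what produces the three regimes and locates the kink of $S^+$.
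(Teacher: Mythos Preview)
Your strategy is genuinely different from the paper's and, at the structural level, sound. The paper never invokes KKT or Lagrange multipliers: it parametrises admissible profiles directly by the pair $(y,m)$, where $y=y_\rho$ is the point at which $\inf_{z}\{z\xi_0-H_\rho(z)\}$ is achieved and $m=H_\rho(1)$, and then uses Jensen's inequality to show that, among profiles with prescribed $(y_\rho,m_\rho)$, the one maximising $\bb S$ has $H_\rho$ piecewise linear with a single kink at $y_\rho$. This gives at once the two-dimensional problem $\sup_{(y,m)\in D}F(y,m)$ over an explicit convex polygon $D$. The paper then proves that $f(m):=\sup_{y}F(y,m)$ is strictly concave on $[m_-,m_+]$, checks that the interior critical-point system $\partial_yF=\partial_mF=0$ has no admissible solution, and concludes that the supremum sits at a boundary value $m\in\{\alpha,\beta,\gamma\}$, where the formula is read off. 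Your route --- relaxing the equality constraint to an inequality by a segment-to-$\tfrac12$ argument, then extracting the piecewise-constant shape of the optimiser from a multiplier supported on the active set --- is more conceptual and explains \emph{why} the extremal profiles are (piecewise) constant, whereas the paper's Jensen reduction is more elementary and sidesteps any infinite-dimensional Lagrangian machinery.

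Where your proposal has a genuine gap is the finite-dimensional endgame. The sentence ``for fixed $\lambda$ both objective and constraint are affine in $a$, so the optimum is attained at $\lambda=1$ and $a\in\{0,1\}$'' does not follow: for fixed $\lambda$ the constraint pins down $a$ uniquely, so there is no segment in $a$ on which to argue by affinity, and nothing in what you wrote forces $\lambda=1$. What is actually needed is exactly the content of the paper's Appendix analysis: once you have reduced to the two-parameter family (your $(a,\lambda)$, the paper's $(y,m)$), you must show that the constrained maximum cannot be interior. The paper does this by computing the Hessian of $F$, showing $f''<0$, and then verifying that the interior stationarity system $\partial_yF=\partial_mF=0$ forces either $m=1$ or the pair $(\rho_-,\rho_+)$ to violate $\rho_-<\rho_+$, so the maximum lands on the boundary, hence on a constant profile. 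Your proposal should either reproduce this argument or give an independent reason why the two-piece profiles are always dominated by the constant ones; without it the identification of the optimiser as $u_{\rho_\pm}$, and hence the whole case split at $W(\rho_-,\rho_+)$, is unsupported. The secondary obstacle you flag --- that the active set $A$ might be a nondegenerate interval --- is real but less serious: in the paper's parametrisation it corresponds to the optimal $y$ not being unique, which is handled by taking $y_\rho$ to be the infimum of the minimising set and does not affect the value of $F$.
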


\begin{center}
\begin{figure}[h!]
\begin{tabular}{cc}
 {\includegraphics[scale=0.4]{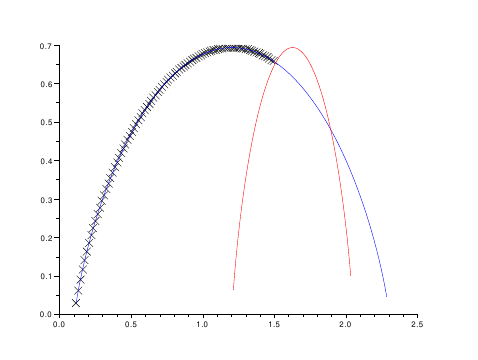}}
&
{ \includegraphics[scale=0.4]{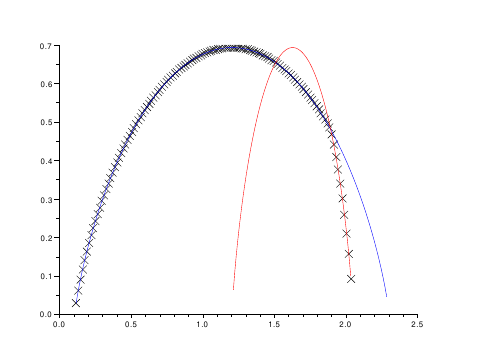}}
\end{tabular}
\caption{Graph of the function $S^+$ (crosses) and graphs of the functions $E \to S_{\rho_\pm, \rho_\pm} (-(E+{\bar V}^+))$ (red and blue) for $\rho_-=0.1$, $\rho_+=0.7$ (left) and for $\rho_-=0.1$, $\rho_+=0.3$ (right). The graph of the  function $S^+$ for $1/2\leq \rho_-<\rho_+$ is similar to the one at the right hand side of the previous figure, since the entropy function in those cases has the same expression when exchanging $\rho_-$ with $\rho_+$.}
\end{figure}
\end{center}

\subsection{Pressure}

We recall that the pressure function $P^+$ is defined as the Legendre transform of the entropy function $S^+$:
\begin{equation*}
P^+ (\theta) = \inf_{E \ge 0} \left\{ \theta E -S^+ (E) \right\}.
\end{equation*}

We introduce the two parameters $\theta_0^\pm:= -\frac{1}{\varphi_\pm} s' (\xi_0)$, where $\xi_0$ is defined in \eqref{chi0}.

\begin{theorem}
\label{th:pressure-competitive}
The pressure function $P^+$ is given by:
\begin{itemize}

\item If $\rho_+ < 1-\rho_-\le \frac{1}{2}$ then
\begin{equation*}
P^+ (\theta) =
\begin{cases}
\vspace{0.2cm}
P(\varphi_-, -\theta) -\theta {\bar V}^+, \quad \theta \ge \theta_0^-,\\
\vspace{0.2cm}
P^+ (\theta_0) + E_{+\infty}^+ (\theta- \theta_0^-), \quad \theta <\theta_0^-.
\end{cases}
\end{equation*}

\item If $\frac{1}{2} \le \rho_+ < 1-\rho_-$ then
\begin{equation*}
P^+ (\theta) =
\begin{cases}
\vspace{0.2cm}
P(\varphi_+, -\theta) -\theta {\bar V}^+, \quad \theta \ge \theta_0^+,\\
\vspace{0.2cm}
 P^+ (\theta_0) + E_{+\infty}^+ (\theta- \theta_0), \quad \theta <\theta_0^+.
\end{cases}
\end{equation*}

\item If $\rho_- < \rho_+ \le \frac{1}{2}$ then
\begin{equation*}
P^+ (\theta) =
\begin{cases}
\vspace{0.2cm}
P(\varphi_-, -\theta) -\theta {\bar V}^+, \quad \theta \ge \theta_0^-,\\
\vspace{0.2cm}
 P(\varphi_+, -\theta) -\theta {\bar V}^+,  \quad \theta \le \theta_0^+,\\
\vspace{0.2cm}
P^+ (\theta_0^+) + \cfrac{P^+(\theta_0^-) - P^+ (\theta_0^+)}{\theta_0^- - \theta_0^+}(\theta-\theta_0^+), \quad \theta \in (\theta_0^+, \theta_0^-).
\end{cases}
\end{equation*}
\item If $\frac{1}{2}\le \rho_- < \rho_+$ then
\begin{equation*}
P^+ (\theta) =
\begin{cases}
\vspace{0.2cm}
P(\varphi_+, -\theta) -\theta {\bar V}^+, \quad \theta \ge \theta_0^+,\\
 \vspace{0.2cm}
 P(\varphi_-, -\theta) -\theta {\bar V}^+,  \quad \theta \le \theta_0^-,\\
\vspace{0.2cm}
P^+ (\theta_0^-) + \cfrac{P^+(\theta_0^+) - P^+ (\theta_0^-)}{\theta_0^+ - \theta_0^-}(\theta-\theta_0^-), \quad \theta \in (\theta_0^-, \theta_0^+),
\end{cases}
\end{equation*}
\end{itemize}
where $P(\rho,\varphi)$ is given by \eqref{Pfunction}.

It follows that the function $P^+$ is a concave continuously differentiable function with some linear parts.
\end{theorem}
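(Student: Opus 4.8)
The plan is to compute $P^{+}$ as the concave conjugate of $S^{+}$, using the closed form of $S^{+}$ from Theorem~\ref{th:ent1} and the band endpoints $E_{+\infty}^{\pm}$ from Proposition~\ref{EBcompetitive}. Since the energy band lies in $[0,+\infty)$ by definition, the constraint $E\ge 0$ is inactive, so $P^{+}(\theta)=-\sup_{E\in[E_{+\infty}^{-},E_{+\infty}^{+}]}\{S^{+}(E)-\theta E\}$. The single computation used repeatedly is the Legendre transform of one ``equilibrium'' piece: if $\rho$ has chemical potential $\varphi$, then $E\mapsto \theta E-S_{\rho,\rho}(-(E+{\bar V}^{+}))$ is strictly convex with a unique critical point, found by solving $s'(u)=-\theta\varphi$ with $u=\frac{E+{\bar V}^{+}+\log(1+e^{\varphi})}{\varphi}$ and $s'(u)=\log\frac{u}{1-u}$, namely $E^{\star}(\theta)=\frac{\varphi}{1+e^{\theta\varphi}}-\log(1+e^{\varphi})-{\bar V}^{+}$; plugging back gives the value $P(\varphi,-\theta)-\theta{\bar V}^{+}$, with $P$ as in \eqref{Pfunction}, where one also uses the symmetry $P(-\varphi,-\theta)=P(\varphi,-\theta)$. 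Note that $\theta\mapsto E^{\star}(\theta)$ is strictly decreasing and, for $\varphi>0$, tends to $-\log(1+e^{\varphi})-{\bar V}^{+}$ as $\theta\to+\infty$.

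For $\rho_{-}\le\frac12\le\rho_{+}$, Theorem~\ref{th:ent1} makes $S^{+}(E)=S_{\rho_{0},\rho_{0}}(-(E+{\bar V}^{+}))$ a single smooth, strictly concave piece on the band, with $\varphi_{0}=\sup\{|\varphi_{-}|,|\varphi_{+}|\}$ equal to $\varphi_{+}$ if ${\bar\rho}=\rho_{+}$ (i.e. $\rho_{+}>1-\rho_{-}$) and to $-\varphi_{-}$ if ${\bar\rho}=\rho_{-}$. From $E_{+\infty}^{-}+{\bar V}^{+}=-\log(1+e^{\varphi_{0}})$ one sees $S^{+}(E_{+\infty}^{-})=0$ with infinite slope, so the conjugate is never pinned at the bottom; from $E_{+\infty}^{+}+{\bar V}^{+}=W(\varphi_{-},\varphi_{+})$ together with the identity $W(\varphi_{-},\varphi_{+})+\log(1+e^{\varphi_{\pm}})=\varphi_{\pm}\xi_{0}$, which is immediate from \eqref{W function} and \eqref{chi0}, one gets that at the top of the band the argument of $s$ equals $\xi_{0}$ in the case $\varphi_{0}=\varphi_{+}$ and $1-\xi_{0}$ in the case $\varphi_{0}=-\varphi_{-}$; hence $S^{+}(E_{+\infty}^{+})=-s(\xi_{0})\ne 0$ and $(S^{+})'(E_{+\infty}^{+})$ equals $\theta_{0}^{+}$, respectively $\theta_{0}^{-}$, which is finite. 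Consequently the maximiser of $S^{+}(E)-\theta E$ is the interior point $E^{\star}(\theta)$ exactly when $\theta\ge\theta_{0}^{+}$ (resp. $\theta\ge\theta_{0}^{-}$), giving $P(\varphi_{+},-\theta)-\theta{\bar V}^{+}$ (resp. $P(\varphi_{-},-\theta)-\theta{\bar V}^{+}$, using the symmetry above), while for smaller $\theta$ it is pinned at $E_{+\infty}^{+}$, giving $\theta E_{+\infty}^{+}-S^{+}(E_{+\infty}^{+})=P^{+}(\theta_{0}^{\pm})+E_{+\infty}^{+}(\theta-\theta_{0}^{\pm})$, an affine part on the infinite interval. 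This is the content of the first two cases.

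For $\rho_{-}<\rho_{+}\le\frac12$ or $\frac12\le\rho_{-}<\rho_{+}$, Theorem~\ref{th:ent1} makes $S^{+}$ the concatenation of $S_{\rho_{-},\rho_{-}}(-(E+{\bar V}^{+}))$ and $S_{\rho_{+},\rho_{+}}(-(E+{\bar V}^{+}))$, joined at $E_{c}=W(\rho_{-},\rho_{+})-{\bar V}^{+}$, where $(S^{+})'$ jumps downward. The identity $W+\log(1+e^{\varphi_{\pm}})=\varphi_{\pm}\xi_{0}$ shows the one-sided derivatives of $S^{+}$ at $E_{c}$ are $\theta_{0}^{-}$ and $\theta_{0}^{+}$, and computing $E_{+\infty}^{\pm}+{\bar V}^{+}$ from Proposition~\ref{EBcompetitive} shows $S^{+}$ vanishes, with infinite one-sided slope, at both endpoints, so the endpoints contribute no affine part. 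Hence the maximiser of $S^{+}(E)-\theta E$ lies in the $\rho_{-}$-piece for $\theta$ on one side of the interval bounded by $\theta_{0}^{-}$ and $\theta_{0}^{+}$ (giving $P(\varphi_{-},-\theta)-\theta{\bar V}^{+}$), in the $\rho_{+}$-piece on the other side (giving $P(\varphi_{+},-\theta)-\theta{\bar V}^{+}$), and is pinned at $E_{c}$ strictly between them, giving $\theta E_{c}-S^{+}(E_{c})$, i.e. the affine interpolation between $P^{+}(\theta_{0}^{-})$ and $P^{+}(\theta_{0}^{+})$. The order of $\theta_{0}^{-}$ and $\theta_{0}^{+}$---which of the two cases one is in---follows from the sign of $\xi_{0}-\frac12$, equivalently, via the mean-value form $\xi_{0}=e^{\varphi^{\star}}/(1+e^{\varphi^{\star}})$ with $\varphi^{\star}\in(\varphi_{-},\varphi_{+})$, from the common sign of $\varphi_{-},\varphi_{+}$.

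Finally, $P^{+}$ is concave as an infimum of affine functions of $\theta$, is finite on all of $\RR$ because $S^{+}\le\log 2$ and the band is compact, and is differentiable everywhere because $S^{+}$ is strictly concave on each of its $S_{\rho,\rho}$-pieces with only a downward corner at $E_{c}$, so $S^{+}(E)-\theta E$ has a unique maximiser for each $\theta$; a finite, differentiable concave function on $\RR$ is of class $C^{1}$, and its linear parts are precisely the ones produced above (an infinite half-line when $\rho_{-}\le\frac12\le\rho_{+}$, a compact interval otherwise). The conceptual input is modest; the hard part is the case-by-case bookkeeping---pinning down, in each of the four regimes, the threshold $\theta$ at which the unconstrained maximiser leaves the active sub-interval or meets $E_{c}$, verifying the order of $\theta_{0}^{-}$ and $\theta_{0}^{+}$, and tracking the signs of $\varphi_{-},\varphi_{+},\varphi_{0}$---all of which reduces to the identities $s'(u)=\log\frac{u}{1-u}$, $s(u)=s(1-u)$, $P(-\varphi,-\theta)=P(\varphi,-\theta)$ and $W+\log(1+e^{\varphi_{\pm}})=\varphi_{\pm}\xi_{0}$.
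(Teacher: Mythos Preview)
Your proposal is correct and follows essentially the same route as the paper's proof in Appendix~\ref{sec:A3}: compute the Legendre transform of the explicit $S^{+}$ from Theorem~\ref{th:ent1} case by case, reading off the thresholds $\theta_{0}^{\pm}$ from the one-sided derivatives of $S^{+}$ at the band endpoints (in the single-piece regime $\rho_{-}\le\tfrac12\le\rho_{+}$) or at the corner $E_{c}=W(\varphi_{-},\varphi_{+})-\bar V^{+}$ (in the two-piece regimes). Your write-up is in fact a bit more explicit than the paper's---you record the interior critical point $E^{\star}(\theta)$, invoke the symmetry $P(\varphi,-\theta)=P(-\varphi,-\theta)$, and justify the ordering of $\theta_{0}^{-},\theta_{0}^{+}$ via the mean-value representation $\xi_{0}=e^{\varphi^{\star}}/(1+e^{\varphi^{\star}})$---but the strategy is identical.
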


The proof of this theorem is postponed to Appendix \ref{sec:A3}.

\begin{center}
\begin{figure}[h!]
\begin{tabular}{cc}
 {\includegraphics[scale=0.4]{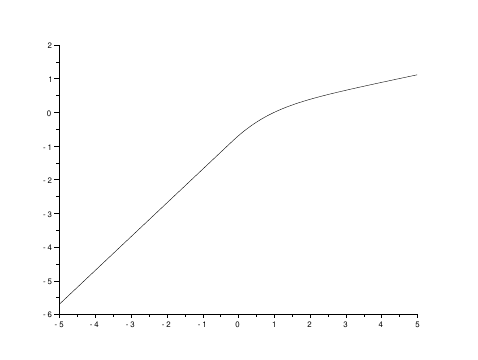}}
&
{ \includegraphics[scale=0.4]{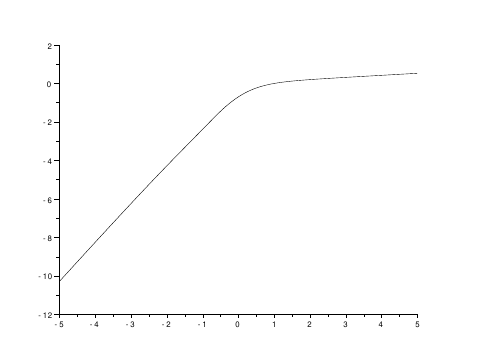}}
\end{tabular}
\caption{Graph of the function $P^+$ for $\rho_-=0.3$, $\rho_+=0.8$ (left) and for $\rho_-=0.1$, $\rho_+=0.3$ (right).}
\end{figure}
\end{center}

\subsection{Consequences}

Let $\rho \in (0,1)$ and $\varphi$ be the associated chemical potential, see \eqref{chemical potential}. Let us first observe that the equation of the tangent to the curve of $E \to S_{\rho,\rho} (E)$ at $E=E_0 (\rho):= -\rho \log (1-\rho) - (1-\rho) \log (\rho)$ is given by
\begin{equation*}
Y=-E -\log (\chi (\rho)).
\end{equation*}
This is the unique point where the tangent has a slope equal to $-1$. Since $S_{\rho, \rho}$ is a concave function, the curve of $S_{\rho,\rho}$ is strictly below the tangent apart from the point $(E_{0}, S_{\rho,\rho} (E_0))=(E_0, (-s) (\rho))$. Moreover, $E_0 (\rho) \ge -\frac{\varphi}{2} +\log(1+e^{\varphi})$, i.e. $E_0$ is to the right of the point where the function $S_{\rho,\rho}$ has its maximum.

This permits to show that $J^+ (E)= E-S^+ (E)$ is a non negative convex function which vanishes for a unique value of $E$ equal to  ${\bb S} ({\bar \rho})$. We recall that $J^+$ is the large deviations function of the random variables $\{Y_N \}_N$ under $\mu_{ss,N}$.

From this we recover the result of Bahadoran (\cite{B}) in the case of the TASEP. We also extend some of the results of \cite{DLS} to the asymmetric simple exclusion process.

\begin{theorem}
\label{th:cons1}
In the thermodynamic limit, the Gibbs-Shanonn entropy of the non-equilibrium stationary state defined by
\begin{equation*}
{\mc S} (\mu_{ss,N}) =  \sum_{\eta \in \Omega_N} \left[ - \mu_{ss, N} (\eta) \log (\mu_{ss,N} (\eta)) \right]
\end{equation*}
is equal to the Gibbs-Shanonn entropy of the local equilibrium state, i.e.
\begin{equation*}
\lim_{N \to +\infty} \cfrac{{\mc S} (\mu_{ss,N})}{2N+1} = {\bb S} ({\bar \rho}).
\end{equation*}
Moreover, the corresponding fluctuations are Gaussian with a variance $\sigma$ equal to the one provided by a local equilibrium statement, i.e. $$\sigma=S_{{\bar \rho},{\bar \rho}}^{''} ({\bb S} ({\bar\rho})) = \frac{1}{2} \int_{-1}^1 \, \chi ({\bar \rho}) \left[ (-s)' ({\bar \rho}(u)) \right]^2\, du.$$
\end{theorem}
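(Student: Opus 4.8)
The plan is to read both assertions off the explicit formula for $S^+$ in Theorem \ref{th:ent1} together with the large deviation interpretation of $J^+(E)=E-S^+(E)$. I would first record the identity ${\mc S}(\mu_{ss,N})/(2N+1)=\mathbb{E}_{\mu_{ss,N}}[Y_N]$, where $Y_N(\eta)=-(2N+1)^{-1}\log\mu_{ss,N}(\eta)$, and, as announced in the paragraph preceding the statement, deduce from Theorem \ref{th:ent1} and the tangent‑line computation that $J^+$ is a nonnegative convex function vanishing only at $E^\star:={\bb S}(\bar\rho)$, at which $(S^+)'(E^\star)=1$ and near which $S^+$ is smooth and strictly concave, coinciding there with $E\mapsto S_{\rho_\star,\rho_\star}(-(E+{\bar V}^+))$ for the relevant single density $\rho_\star$ (equal to $\bar\rho$ in the low‑ and high‑density phases, and to $\rho_0$ on the line $\rho_-+\rho_+=1$). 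Equivalently, $P^+(1)=0$, the infimum defining $P^+(1)$ is attained only at $E^\star$, and $P^+$ is differentiable at $1$ with $(P^+)'(1)=E^\star$.

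For the convergence of the entropy, put $Q_N(\theta):=(2N+1)^{-1}\log\sum_{\eta\in\Omega_N}\mu_{ss,N}(\eta)^\theta$. Each $Q_N$ is finite, real‑analytic and convex in $\theta$, with $Q_N(1)=0$ and $Q_N'(1)=-{\mc S}(\mu_{ss,N})/(2N+1)$. By \eqref{eq:pressure-cumul} and Theorem \ref{th:pressure-competitive}, $Q_N\to-P^+$ pointwise in a neighbourhood of $\theta=1$, and $-P^+$ is continuously differentiable there; since the derivatives of a pointwise‑convergent sequence of convex functions converge at every point of differentiability of the limit, $Q_N'(1)\to-(P^+)'(1)=-E^\star$, whence $\lim_N{\mc S}(\mu_{ss,N})/(2N+1)={\bb S}(\bar\rho)$. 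This is the same as saying that the relative entropy of $\mu_{ss,N}$ with respect to the local Gibbs state $\nu_{\bar\rho}$ is $o(N)$, which is the content of \cite{B}; alternatively the limit follows directly from the fact that the (essentially bounded) variable $Y_N$ obeys the large deviation principle with rate $J^+$, whose unique zero is $E^\star$.

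For the Gaussian fluctuations I would invoke the strong local equilibrium statement proved in Section \ref{sec:ls}. In the low‑ and high‑density phases ($\rho_-+\rho_+\ne1$), $\bar\rho$ is the constant profile equal to $\rho_-$ or $\rho_+$, and the local equilibrium estimate says that $\log\mu_{ss,N}(\eta)-\log\nu_{\bar\rho}(\eta)=o(\sqrt N)$ in $\mu_{ss,N}$‑probability, where $\nu_{\bar\rho}=\otimes_x{\mc B}(\bar\rho(x))$. Since
$$
-\log\nu_{\bar\rho}(\eta)=-\sum_{x=-N}^{N}s(\bar\rho(x))-\sum_{x=-N}^{N}\big(\eta_x-\bar\rho(x)\big)\,s'(\bar\rho(x))
$$
is an affine functional of independent, uniformly bounded Bernoulli variables, the Lindeberg central limit theorem gives that $(2N+1)^{-1/2}\big(-\log\nu_{\bar\rho}(\eta)-\mathbb{E}_{\nu_{\bar\rho}}[-\log\nu_{\bar\rho}(\eta)]\big)$ converges to a centred Gaussian of variance $\sigma=\lim_N(2N+1)^{-1}\sum_x\chi(\bar\rho(x))\,s'(\bar\rho(x))^2=\frac12\int_{-1}^1\chi(\bar\rho(u))[(-s)'(\bar\rho(u))]^2\,du$, which together with the first step yields $\big(-\log\mu_{ss,N}(\eta)-{\mc S}(\mu_{ss,N})\big)/\sqrt{2N+1}\Rightarrow{\mc N}(0,\sigma)$. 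On the first‑order line $\rho_-+\rho_+=1$ one argues similarly after using local equilibrium to write $\mu_{ss,N}(\eta)\simeq(2N+1)^{-1}\nu^{(\hat k)}(\eta)$, where $\nu^{(k)}$ is the product measure with density $\rho_-$ to the left of $k$ and $\rho_+$ to the right and $\hat k=\hat k(\eta)$ is the empirical shock location, and then conditioning on $\hat k$; the limiting variance is again $\sigma$ because on this line $\chi(\rho_-)=\chi(\rho_+)$ and $\varphi_-^2=\varphi_+^2$, so it does not depend on the shock position. Finally, the identification of $\sigma$ with the expression in the statement is a short computation using the near‑$E^\star$ form of $S^+$ and the symmetry $\chi(r)=\chi(1-r)$: concretely $\sigma$ is the reciprocal of minus the curvature of the entropy at its equilibrium value, $\sigma=-[(S^+)''(E^\star)]^{-1}=[(J^+)''(E^\star)]^{-1}$, with $(S^+)''(E^\star)=S''_{\bar\rho,\bar\rho}({\bb S}(\bar\rho))$.

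The convergence of the entropy is routine once Theorems \ref{th:ent1} and \ref{th:pressure-competitive} are available, and the central limit theorems for the product measures are classical. I expect the main obstacle to be the transfer step for the fluctuations: one must know that $\log\mu_{ss,N}(\eta)$ agrees with $\log\nu_{\bar\rho}(\eta)$ (resp. with $\log\nu^{(\hat k)}(\eta)$ on the first‑order line) up to $o(\sqrt N)$ on a set of $\mu_{ss,N}$‑probability tending to one — much sharper than the $o(N)$ bound implied by the entropy limit and Markov's inequality. This is precisely what the quantitative local equilibrium estimates of Section \ref{sec:ls} are meant to supply, and it is most demanding on the line $\rho_-+\rho_+=1$, where $\bar\rho$ is the shock profile and $\mu_{ss,N}$ is a superposition over the macroscopic shock location.
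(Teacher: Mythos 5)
Your treatment of the first assertion is correct and close in spirit to the paper's: the paper also reduces everything to the fact that $J^+(E)=E-S^+(E)$ is a non-negative convex function vanishing only at ${\bb S}(\bar\rho)$, and your convex-analysis route through $Q_N$ and $(P^+)'(1)=E^\star$ is if anything a cleaner way to pass from the LDP to convergence of the mean. One caveat: you take for granted that the tangency point $-E_0(\bar\rho)-\bar V^+$ actually lies inside the energy band, so that $J^+$ really does vanish somewhere. This is not automatic and is where the paper spends most of its effort (the verification of inequality \eqref{eq:e0000} via the auxiliary function $g$ and the monotonicity argument $g'\ge 0$); your proposal should include that check rather than cite it as already done.

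The fluctuation part is where your route genuinely diverges and where it breaks down. The paper's entire proof of Gaussianity is the second-order Taylor expansion of $J^+$ around its unique zero: since near ${\bb S}(\bar\rho)$ one has $S^+(E)=S_{\bar\rho,\bar\rho}(-(E+\bar V^+))$ smooth and strictly concave, $J^+$ has a non-degenerate quadratic minimum, and this quadratic behaviour of the rate function \emph{is} what the authors mean by "Gaussian fluctuations with variance $\sigma$" (compare Theorem \ref{th:cons2}, where "not Gaussian" means precisely that $J^-$ is linear, not quadratic, at its minimum). The curvature computation you relegate to a final "identification of $\sigma$" is therefore the whole proof. Your primary route — strong local equilibrium plus a CLT for $\nu_{\bar\rho}$ plus a transfer — has two concrete gaps. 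First, hypothesis ({\bf H}) of Section \ref{sec:ls} controls only the conditional law $\mu_{ss,N}(\cdot\,|\,\bM)$ given the box counts; it says nothing about $\log\mu_{ss,N}(\bM)$ itself at precision $o(\sqrt N)$, so the claimed estimate $\log\mu_{ss,N}(\eta)-\log\nu_{\bar\rho}(\eta)=o(\sqrt N)$ would require a moderate-deviation control of the density field that is proved nowhere in the paper. Second, even granting that estimate, it only gives $d\mu_{ss,N}/d\nu_{\bar\rho}=e^{o(\sqrt N)}$, and a Radon--Nikodym density of that size does not transport a central limit theorem from $\nu_{\bar\rho}$ to $\mu_{ss,N}$ (one would need total-variation closeness or contiguity, which is far stronger). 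So the CLT you establish under $\nu_{\bar\rho}$ does not yield the statement under $\mu_{ss,N}$ by this argument. The remedy is simply to adopt the paper's (weaker but intended) interpretation and replace the whole transfer scheme by the Taylor expansion of $J^+$ at ${\bb S}(\bar\rho)$.
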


\section{Cooperative TASEP}
\label{sec:coopT}

In this section we present the main results of the article in the case of the cooperative TASEP. We start by deriving the variational formula for the entropy function \eqref{eq:1.9} for the cooperative TASEP.
Let ${\mc F}$ be the set
\begin{equation*}
{\mc F} := \left\{ \varphi \in C^{1} ([-1,1]) \; : \; \varphi (\pm1) =\varphi_{\pm}\, \,, \; \varphi' > 0 \right\}.
\end{equation*}
and recall that $\chi:[0,1] \to [0,1]$ represents the mobility and  is given by $\chi(\rho)=\rho (1-\rho)$.

The quasi-potential $V^{-}$ of the cooperative TASEP (\cite{DLS3}, \cite{B2}) is defined by
\begin{equation}
\label{eq:Vminus}
V^{-} (\rho)=  - {\bb S} (\rho) + \sup_{\varphi \in {\mc F}}  \mc H (\rho, \varphi) -{\bar V}^-
\end{equation}
where $\mathbb{S}(\cdot)$ is defined in (\ref{eq: S}), $\mc H$ is defined in (\ref{H function}) and
\begin{equation*}
{\bar V}^- =\log\Big(\max_{ \rho \in [\rho_-, \rho_+]} \chi (\rho)\Big).
\end{equation*}

\subsection{Energy bands}

In this section we determine the energy band of the cooperative TASEP. This is summarized in Figure \ref{fig:ebcoop}.

\begin{proposition}
\label{lem:3123}
The bottom  of the energy band is given by
\begin{equation*}
\begin{split}
&E^{-}_{-\infty}=-{\bar V}^-
+\begin{cases}
\vspace{2mm}
&-\log (2), \quad \rho_- \leq \frac{1}{2} \leq \rho_+,\\
\vspace{2mm}
&- \log (1+e^{-\varphi_+}), \quad \rho_- < \rho_+ \leq \frac{1}{2},\\
\vspace{2mm}
&-\log (1+e^{\varphi_-}), \quad \frac{1}{2}\leq \rho_- < \rho_+,
\end{cases}
\end{split}
\end{equation*}
and the top  of the energy band by is given by
\begin{equation*}
E^{+}_{-\infty}=-{\bar V}^- -\log (1+e^{-\varphi_0})
\end{equation*}
where $\varphi_0=\sup\{|\varphi_-|,|\varphi_+|\}$.

\end{proposition}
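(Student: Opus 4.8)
The plan is to recast both endpoints as a single variational problem for the functional $\mc H$ of \eqref{H function} and then to solve it by elementary convexity. By \eqref{eq:Vminus}, for every $\rho\in\mc M$,
\[
\bb S(\rho)+V^{-}(\rho)+\bar V^{-}\;=\;\sup_{\varphi\in\mc F}\mc H(\rho,\varphi),
\]
so, by the definition \eqref{energy band} of the energy band,
\[
E^{-}_{-\infty}+\bar V^{-}=\inf_{\rho\in\mc M}\ \sup_{\varphi\in\mc F}\ \mc H(\rho,\varphi),
\qquad
E^{+}_{-\infty}+\bar V^{-}=\sup_{\rho\in\mc M}\ \sup_{\varphi\in\mc F}\ \mc H(\rho,\varphi).
\]
For a fixed $\varphi$ the integrand defining $\mc H(\rho,\varphi)$ is affine in $\rho(x)$ with slope $-\tfrac12\varphi(x)$; optimizing pointwise over $0\le\rho(x)\le1$ (the resulting densities still belong to $\mc M$) I would record the two identities
\[
\inf_{\rho\in\mc M}\mc H(\rho,\varphi)=-\tfrac12\int_{-1}^{1}\log\big(1+e^{|\varphi(x)|}\big)\,dx,
\qquad
\sup_{\rho\in\mc M}\mc H(\rho,\varphi)=-\tfrac12\int_{-1}^{1}\log\big(1+e^{-|\varphi(x)|}\big)\,dx,
\]
the optima being $\rho(x)=\mathbf 1\{\varphi(x)>0\}$ and $\rho(x)=\mathbf 1\{\varphi(x)<0\}$ respectively.

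For the \emph{top} of the band I would use $\sup_\rho\sup_\varphi=\sup_\varphi\sup_\rho$, so it remains to minimize $\int_{-1}^{1}\log(1+e^{-|\varphi(x)|})\,dx$ over $\varphi\in\mc F$. Since every such $\varphi$ is continuous and increasing with $\varphi(\pm1)=\varphi_\pm$, it takes values in $[\varphi_-,\varphi_+]$, so $|\varphi(x)|\le\varphi_0$ and the integrand is $\ge\log(1+e^{-\varphi_0})$, giving the bound $2\log(1+e^{-\varphi_0})$. This is sharp: one builds a $C^{1}$, strictly increasing family $\varphi_\delta\in\mc F$ which stays within $\delta$ of whichever of $\varphi_\pm$ has larger modulus on all of $[-1,1]$ except a boundary interval of length $\delta$, and checks that the integrals converge to $2\log(1+e^{-\varphi_0})$. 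Hence $E^{+}_{-\infty}=-\bar V^{-}-\log(1+e^{-\varphi_0})$, uniformly in the three positions of $\rho_\pm$ relative to $\tfrac12$.

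For the \emph{bottom} write $m$ for the case-dependent bracketed constant in the statement and note that $m=-\log(1+e^{|c|})$, where $c$ is the point of $[\varphi_-,\varphi_+]$ closest to $0$ (so $c=0$, $c=\varphi_+$, $c=\varphi_-$ in the three cases). For $\inf_\rho\sup_\varphi\mc H\le m$ I would evaluate $\mc H$ at the constant profile $\rho^{\star}\equiv\tfrac12$, $\equiv0$, $\equiv1$ respectively, using the pointwise bounds $\tfrac12 t-\log(1+e^{t})\le-\log2$, $\ t-\log(1+e^{t})=-\log(1+e^{-t})\le-\log(1+e^{-\varphi_+})$ for $t\le\varphi_+\le0$, and $-\log(1+e^{t})\le-\log(1+e^{\varphi_-})$ for $t\ge\varphi_-\ge0$, together with $\varphi(x)\le\varphi_+$ (resp.\ $\varphi(x)\ge\varphi_-$) for $\varphi\in\mc F$; this yields $\mc H(\rho^{\star},\varphi)\le m$ for all $\varphi\in\mc F$. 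For the reverse inequality I would use the trivial min--max bound and the first identity above,
\[
\inf_{\rho\in\mc M}\sup_{\varphi\in\mc F}\mc H(\rho,\varphi)\;\ge\;\sup_{\varphi\in\mc F}\inf_{\rho\in\mc M}\mc H(\rho,\varphi)\;=\;-\tfrac12\inf_{\varphi\in\mc F}\int_{-1}^{1}\log\big(1+e^{|\varphi(x)|}\big)\,dx,
\]
and observe that $t\mapsto\log(1+e^{|t|})$ is even, convex and nondecreasing in $|t|$, hence minimized over $[\varphi_-,\varphi_+]$ at $c$; since every $\varphi\in\mc F$ ranges in $[\varphi_-,\varphi_+]$, the last integral is $\ge2\log(1+e^{|c|})$, and a $C^{1}$ strictly increasing family $\varphi_\delta$ staying within $\delta$ of $c$ off a boundary interval of vanishing length shows the infimum equals $2\log(1+e^{|c|})$. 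Thus the right-hand side equals $-\log(1+e^{|c|})=m$, so $\inf_\rho\sup_\varphi\mc H=m$ and $E^{-}_{-\infty}=-\bar V^{-}+m$.

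The one genuinely non-algebraic point, and the step I expect to be the main obstacle, is exactly this approximation argument: because $\mc F$ requires $\varphi\in C^{1}$ with $\varphi'>0$ \emph{strictly}, the $\varphi$'s that would exactly realize the relevant infima are piecewise constant and do not lie in $\mc F$, so one must produce honest elements of $\mc F$ along which the integrals converge to the stated values, and justify passing to the limit via the continuity of $\varphi\mapsto\int_{-1}^{1}\log(1+e^{\pm|\varphi(x)|})\,dx$ under uniform convergence. Everything else reduces to the short pointwise computations above.
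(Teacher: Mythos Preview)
Your argument is correct. For the top of the band it coincides with the paper's: both swap the two suprema, optimize in $\rho$ pointwise, and then push $\varphi$ towards a step function by an approximation in $\mc F$; your packaging via the identity $\sup_{\rho}\mc H(\rho,\varphi)=-\tfrac12\int\log(1+e^{-|\varphi|})$ is just a cleaner way of writing the same computation.

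For the bottom of the band your route is genuinely different and more elementary. The paper does not use the min--max inequality: it invokes the Derrida--Lebowitz--Speer characterization of $\sup_{\varphi}\mc H(\rho,\varphi)$ via the convex envelope $G_\rho$ of $H_\rho$ and the associated optimal field $\varphi_{G_\rho}$, reduces the problem to an infimum over non-decreasing functions $g:[-1,1]\to[0,1]$, and then parametrizes by $(x_-,x_+,y_-,y_+)$ to carry out an explicit finite-dimensional minimization. Your approach sidesteps all of this by sandwiching: the upper bound comes from evaluating at a well-chosen constant profile $\rho^\star$, and the lower bound from $\inf_\rho\sup_\varphi\ge\sup_\varphi\inf_\rho$ together with the pointwise identity $\inf_\rho\mc H(\rho,\varphi)=-\tfrac12\int\log(1+e^{|\varphi|})$ and the same approximation-in-$\mc F$ step. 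The payoff is a much shorter proof of the proposition itself. What the paper's approach buys is that the $(g,T(g),x_\pm,y_\pm)$ machinery is exactly what is needed later in the proof of Theorem~\ref{th:ent2} (Proposition~\ref{prop:s-}), so in the context of the full paper the extra work here is not wasted; your min--max trick, by contrast, exploits that the saddle happens to be tight for the \emph{extremal} values $E^{\pm}_{-\infty}$ and would not by itself yield the entropy function on the whole band.
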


\begin{center}
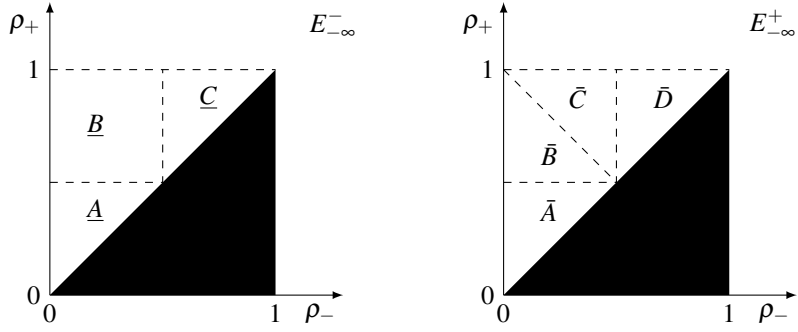
\begin{figure}[h!]
\label{fig:eb-cooperative}
\begin{tabular}{ccc}
\begin{tikzpicture}[scale=0.3]
\draw[->,>=latex] (0,0) -- (13,0);
\draw[-,>=latex,dashed] (0,10) -- (10,10);
\draw[-,>=latex,dashed] (0,5) -- (5,5) -- (5,10);
\draw[->,>=latex] (0,0) -- (0,13);
\draw (0,12) node[left]{$\rho_+$};
\draw (12,0) node[below]{$\rho_-$};
\draw (0,0) node[below]{$0$};
\draw (0,0) node[left]{$0$};
\draw (10,0) node[below]{$1$};
\draw (0,10) node[left]{$1$};
\fill (0,0) -- (10,0) -- (10,10) -- cycle;
\draw (1.25, 3.75) node[right] {${\underline A}$};
\draw (1.25, 7.5) node[right] {${\underline B}$};
\draw (6.25, 8.75) node[right] {${\underline C}$};
\draw (14, 12) node[left] {${E^-_{-\infty}}$};
\end{tikzpicture}
&
\phantom{aaa}
&
\begin{tikzpicture}[scale=0.3]
\draw[->,>=latex] (0,0) -- (13,0);
\draw[-,>=latex,dashed] (0,10) -- (10,10);
\draw[-,>=latex,dashed] (0,10) -- (5,5);
\draw[-,>=latex,dashed] (0,5) -- (5,5) -- (5,10);
\draw[->,>=latex] (0,0) -- (0,13);
\draw (0,12) node[left]{$\rho_+$};
\draw (12,0) node[below]{$\rho_-$};
\draw (0,0) node[below]{$0$};
\draw (0,0) node[left]{$0$};
\draw (10,0) node[below]{$1$};
\draw (0,10) node[left]{$1$};
\fill (0,0) -- (10,0) -- (10,10) -- cycle;
\draw (1.25, 3.75) node[right] {${\bar A}$};
\draw (1.25, 6.25) node[right] {${\bar B}$};
\draw (2.5, 8.75) node[right] {${\bar C}$};
\draw (6.25, 8.75) node[right] {${\bar D}$};
\draw (10.5, 12) node[right] {${E^+_{-\infty}}$};
\end{tikzpicture}
\end{tabular}
\caption{The phase diagram for the bottom of the energy band (left) and the top of the energy band (right) for the cooperative TASEP. We have ${\underline A}=-\log(1-\rho_+)$, $\underline B= \log (2)$, $\underline C = -\log(\rho_-)$ and $\bar A= -\log(\chi(\rho_+)) +\log (1-\rho_-)$, $\bar B = 2\log(2) + \log (1-\rho_-)$, $\bar C= 2\log (2) + \log (\rho_+)$, $\bar D = -\log (\chi(\rho_-)) +\log(\rho_+)$. All transitions are of first order.}
\label{fig:ebcoop}
\end{figure}
\end{center}

\subsection{Entropy} \label{sec entropy 2}

We are now in position to state the main result of this section.

\begin{theorem}
\label{th:ent2}
The restriction of the entropy function $S^-$ on the energy band $[E_{-\infty}^- \, ; \, E_{-\infty}^+]$ is given by
\begin{equation*}
S^- (E) =
\begin{cases}
&-(E+{\bar V}^-){\bf 1}\{(E+{\bar V}^-) \le s(\rho_0)\}\\
&+S_{\rho_0, \rho_0} (-(E+{\bar V}^-)){\bf 1}\{(E+{\bar V}^-) > s(\rho_0)\}, \quad \rho_- \leq \frac{1}{2}\leq  \rho_+,\\
&\\
&S_{\rho_+, \rho_+} (-(E+{\bar V}^-)){\bf 1}\{(E+{\bar V}^-) < s(\rho_+)\}\\
&-(E+{\bar V}^-){\bf 1}\{s(\rho_+)\leq (E+{\bar V}^-) \le s(\rho_-)\}\\
&+S_{\rho_-, \rho_-} (-(E+{\bar V}^-)) \, {\bf 1}\{(E+{\bar V}^-) > s(\rho_-)\}, \quad \rho_- < \rho_+ \leq \frac{1}{2},\\
&\\
&S_{\rho_-, \rho_-} (-(E+{\bar V}^-)) \, {\bf 1}\{(E+{\bar V}^+) < s(\rho_-)\} \\
&-(E+{\bar V}^-){\bf 1}\{s(\rho_-)\leq (E+{\bar V}^-) \le s(\rho_+)\}\\
&+ S_{\rho_+, \rho_+} (-(E+{\bar V}^-)) \,  {\bf 1}\{(E+{\bar V}^-) > s(\rho_+)\}, \quad \frac{1}{2}\leq  \rho_- < \rho_+.
\end{cases}
\end{equation*}
Moreover, the function $S^-$ is concave, its derivative $(S^-)'$ is continuous on the energy band but its second derivative $(S^-)''$ is not continuous.

The supremum in the definition of $S^-(E)$, see \eqref{eq:1.9}, for $E\in [E_{-\infty}^- \, ; \, E_{-\infty}^+]$ is realized by the profiles:
\begin{equation*}
\begin{cases}
\mathcal{M}^E_-,& \quad \textrm{if}\quad E+{\bar V}^- \le s(\rho_0) \\
\{u_{\rho_0}\},& \quad \textrm{if}\quad E+{\bar V}^- > s(\rho_0), \quad \quad \quad \quad \quad  \quad\rho_- \leq \frac{1}{2}\leq  \rho_+, \\
&\\
\{u_{\rho_+}\},& \quad \textrm{if}\quad E+{\bar V}^-< s(\rho_+),\\
\mathcal{M}^E_-,& \quad \textrm{if}\quad s(\rho_+)\leq {E+{\bar V}^-} \leq{ s(\rho_-)}, \\
\{u_{\rho_-}\},& \quad \textrm{if}\quad E+{\bar V}^- > s(\rho_-), \quad \quad \quad \quad \quad  \quad\rho_- < \rho_+ \leq \frac{1}{2},\\
&\\
\{u_{\rho_-}\},& \quad \textrm{if}\quad E+{\bar V}^+ < s(\rho_-), \\
\mathcal{M}^E_-,& \quad \textrm{if}\quad  s(\rho_-)\leq E+{\bar V}^- \le s(\rho_+),\\
\{u_{\rho_+}\},& \quad \textrm{if}\quad E+{\bar V}^- > s(\rho_+)\}, \quad \quad \quad \quad \quad  \quad\frac{1}{2}\leq  \rho_- < \rho_+, \\
\end{cases}
\end{equation*}
where for any $\rho$, the profile $u_\rho$ is the constant profile  equal  to $\frac{\log(\rho)-(E+\bar {V}^-)}{\log(\rho)-\log(1-\rho)}$ and  $\mathcal{M}^E_-$ is the set of non-increasing profiles $\rho:[-1,1]\to{[1-\rho_+,1-\rho_-]}$ such that ${\bb S} (\rho)=-(E+\bar{V}^-)$.

\end{theorem}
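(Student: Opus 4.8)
\emph{Proof plan.} The plan is to reduce the variational formula \eqref{eq:1.9} for $S^-$ to an extremal problem for the single functional $G(\rho):=\sup_{\varphi\in{\mc F}}{\mc H}(\rho,\varphi)$. Indeed, \eqref{eq:Vminus} gives $V^-(\rho)+{\bb S}(\rho)=G(\rho)-{\bar V}^-$, so that, setting $c:=E+{\bar V}^-$,
\[
S^-(E)=\sup\big\{{\bb S}(\rho)\;:\;\rho\in{\mc M},\ G(\rho)=c\big\}.
\]
I would first record the basic facts about $G$. Since ${\mc H}(\rho,\cdot)$ is $L^1$-continuous and concave, ${\mc F}$ may be replaced by its $L^1$-closure $\overline{\mc F}$, which is the compact set of non-decreasing functions valued in $[\varphi_-,\varphi_+]$; and since the integrand $(1-\rho)\varphi-\log(1+e^{\varphi})$ is maximized pointwise at $\varphi=\log((1-\rho)/\rho)$ with value $s(\rho)$, one gets the a priori bound $G(\rho)\le -{\bb S}(\rho)$, with equality exactly when $x\mapsto\log((1-\rho(x))/\rho(x))$ lies in $\overline{\mc F}$, i.e.\ when $\rho$ coincides almost everywhere with a non-increasing profile valued in $[1-\rho_+,1-\rho_-]$. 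A second elementary computation gives $G$ on constant profiles: $G(\rho)=s(\rho)$ if $\rho\in[1-\rho_+,1-\rho_-]$, while $G(\rho)=(1-\rho)\varphi_++\log(1-\rho_+)$ if $\rho<1-\rho_+$ and $G(\rho)=(1-\rho)\varphi_-+\log(1-\rho_-)$ if $\rho>1-\rho_-$; solving $G(\rho)=c$ then produces the constants $u_{\rho_\pm}$ of the statement and, using $s(t)=s(1-t)$, the identity ${\bb S}(u_{\rho_\pm})=S_{\rho_\pm,\rho_\pm}(-c)$.

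The inequality $G(\rho)\le -{\bb S}(\rho)$ yields at once the universal bound $S^-(E)\le -(E+{\bar V}^-)$, and by its equality case this bound is attained precisely when the constraint set meets the class ${\mc K}$ of non-increasing profiles valued in $[1-\rho_+,1-\rho_-]$; in that case $S^-(E)=-c$ and the maximizers are exactly $\{\rho\in{\mc K}:{\bb S}(\rho)=-c\}=\mathcal{M}^E_-$. Computing the range of $-{\bb S}$ over ${\mc K}$ (using convexity of $s$, whence $s(\rho(x))\le s(\rho_0)$ and, when $\rho_-\le 1/2\le\rho_+$, also $s(\rho(x))\ge s(1/2)=-\log 2$) shows this occurs exactly for $-\log 2\le c\le s(\rho_0)$ in the maximal-current phase (the band bottom being $c=-\log 2$), and for $\min\{s(\rho_-),s(\rho_+)\}\le c\le s(\rho_0)$ in the two ordered phases (where $s(\rho_0)=\max\{s(\rho_-),s(\rho_+)\}$). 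This produces the linear branch of $S^-$ and the profiles $\mathcal{M}^E_-$ in the theorem.

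It remains to treat the complementary part of the energy band, where the claim is $S^-(E)=S_{\rho_*,\rho_*}(-c)$ for the suitable $\rho_*$ with chemical potential $\varphi_*\in\{\varphi_0,\varphi_-,\varphi_+\}$, attained at the single constant profile $u_{\rho_*}$; the lower bound is the constant-profile computation above. For the upper bound I would use a supporting-line argument: $g(x):=S_{\rho_*,\rho_*}(-x)$ is concave, so it suffices to show ${\bb S}(\rho)\le a\,G(\rho)+b$ for every $\rho$, where $Y=ax+b$ is the tangent to $g$ at $x=c$, i.e.\ $a=g'(c)$, $b=g(c)-ac$. In all cases one evaluates $\sup_{\rho}\{{\bb S}(\rho)-a\,G(\rho)\}$ by optimizing over $\rho$ pointwise: for $a\le0$ (the typical case, with $a\le-1$ in the maximal-current phase) after writing $-a\,G(\rho)=|a|\sup_{\varphi}{\mc H}(\rho,\varphi)$, and for the residual $a>0$ occurring only near the ends of the band in the ordered phases by Sion's minimax theorem, ${\mc H}(\rho,\cdot)$ being concave and ${\mc H}(\cdot,\varphi)$ affine. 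Either way one obtains
\[
\sup_{\rho}\{{\bb S}(\rho)-a\,G(\rho)\}=\sup_{\varphi\in\overline{\mc F}}\tfrac12\int_{-1}^{1}h_a(\varphi(x))\,dx,\qquad h_a(\varphi):=a\log(1+e^{\varphi})+\log(1+e^{-a\varphi}).
\]
Because $h_a$ is even, the supremum on the right is attained at the constant function $\varphi\equiv\varphi_*$ and equals $h_a(\varphi_*)$; and a lengthy but routine Legendre computation gives $h_a(\varphi_*)=\sup_x\{S_{\rho_*,\rho_*}(-x)-ax\}=g(c)-ac$, so the bound is sharp. Unwinding the equality cases in this chain forces $\rho$ to be the constant $u_{\rho_*}$, which gives the uniqueness of the maximizer. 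The concavity and the regularity assertions then follow from the explicit formula: at each junction $c=s(\rho_*)$ the adjacent branches agree and share the slope $\tfrac{d}{dc}(-c)=\tfrac{d}{dc}S_{\rho_*,\rho_*}(-c)=-1$, so $(S^-)'$ is continuous, whereas $(S^-)''$ jumps there because $\tfrac{d^2}{dc^2}S_{\rho_*,\rho_*}(-c)\neq0$.

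I expect the main obstacle to be this upper bound on the non-linear branch. The naive estimate that bounds ${\bb S}(\rho)$ through Jensen's inequality applied to the mean density $\tfrac12\int\rho$ is far too lossy (it only gives $\log 2$ throughout the band), and what makes the argument work is the sharp identity $\sup_{\rho}\{{\bb S}(\rho)-a\,G(\rho)\}=h_a(\varphi_*)$, which hinges both on the exact description of $\overline{\mc F}$ and on the absence of a duality gap in the associated concave--convex optimization.
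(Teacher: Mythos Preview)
Your route is genuinely different from the paper's. The paper (Proposition~\ref{prop:s-} and Appendix~\ref{sec:A2}) first reduces $S^-(E)$ to a four--parameter optimization $\sup_{k\in K} F(k)-(E+\bar V^-)$, then locates the extremal points of the polytope $K$ case by case and evaluates $F$ there. You instead extract the linear branch in one stroke from the pointwise inequality $G(\rho)\le -{\bb S}(\rho)$ (sharp exactly on the class ${\mc K}$ of non-increasing profiles valued in $[1-\rho_+,1-\rho_-]$), and handle the non-linear branch by a tangent-line/duality argument which amounts to computing $\sup_\rho\{{\bb S}(\rho)-aG(\rho)\}=-P^-(a)-a\bar V^-$ directly and then Legendre-transforming back. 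This is conceptually cleaner and unifies the three phase regimes; the price is that the analysis of $h_a$ must be done carefully, whereas the paper's computation, though longer, is mechanical.

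There are two genuine gaps in your upper bound on the non-linear branch. First, the displayed identity with $\sup_{\varphi}$ is correct only for $a\le 0$; when $a>0$ the minimax step yields $\inf_{\varphi}\tfrac12\int h_a(\varphi)$, not a supremum, and on $[\varphi_-,\varphi_+]$ the sup and the inf of $h_a$ sit at opposite endpoints, so the formula as written gives the wrong value in that sub-regime. Second, ``$h_a$ is even'' is not by itself enough to locate the extremum at $\varphi_*$. From $h_a'(\varphi)=a\big[\sigma(\varphi)-\sigma(-a\varphi)\big]$ with $\sigma(t)=e^t/(1+e^t)$ one sees that $h_a$ is monotone on each half-line, but the direction flips at $a=-1$ ($h_a$ increases on $(0,\infty)$ iff $a\notin[-1,0]$). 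You then need a short case check matching this to the regime-dependent $\rho_*$: for instance in the ordered phase $\rho_-<\rho_+\le 1/2$, the branch $c>s(\rho_-)$ has $a<-1$ and $\sup_{[\varphi_-,\varphi_+]}h_a=h_a(\varphi_-)=h_a(\varphi_*)$, while on $c<s(\rho_+)$ one has $a>-1$ and the relevant extremum (sup for $a\in(-1,0]$, inf for $a>0$) equals $h_a(\varphi_+)=h_a(\varphi_*)$; in the maximal-current phase $a<-1$ and the maximum over $[\varphi_-,\varphi_+]\ni 0$ is attained at the endpoint of larger modulus, whose value is $h_a(\varphi_0)$ by evenness (note that $\varphi_0$ itself need not lie in $[\varphi_-,\varphi_+]$, so ``attained at $\varphi\equiv\varphi_*$'' is literally false there, though the value is correct). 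Once these two points are repaired your argument goes through, including the uniqueness of the maximizer by tracing the equality cases.
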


\begin{center}
\begin{figure}[h!]
\begin{tabular}{cc}
{ \includegraphics[scale=0.4]{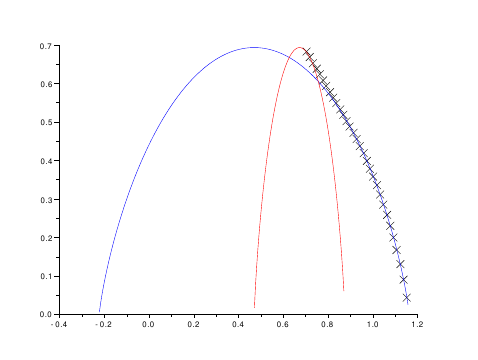}}
&
 {\includegraphics[scale=0.4]{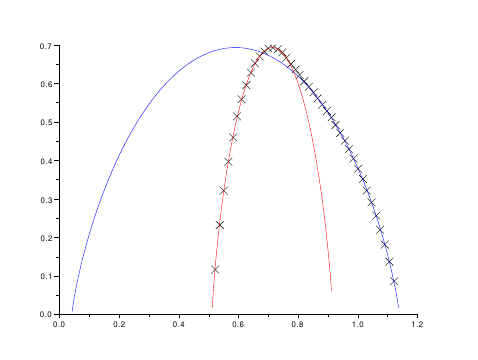}}
\end{tabular}
\caption{Graph of the function $S^-$ (cross) and graphs of the functions $E \to S_{\rho_\pm, \rho_\pm} (-(E+{\bar V}^-)$ (red and blue) for $\rho_-=0.2$, $\rho_+=0.6$ (left) and for $\rho_-=0.25$, $\rho_+=0.4$ (right).}
\end{figure}
\end{center}

\subsection{Pressure}

Let us define the function $m: \RR \to (0,+\infty)$ by
\[
m(\theta)= \inf \{f_{\theta}(t): {t\in [\varphi_-, \varphi_+]}\}, \quad \theta \in \RR,
\]
where for any $\theta \in \RR$, $f_{\theta}: \RR \to (0, +\infty)$ is given by
\begin{equation*}
f_{\theta} (t)=
\begin{cases}
\vspace{1mm}
\exp \Big\{ \cfrac{1}{\theta} \log (1+e^{\theta t} ) +\log (1+e^{-t})\Big\}, \quad \theta \ne 0,\\
\vspace{1mm}
\exp \left\{ t/2 +\log (1+e^{-t})\right\}, \quad \theta=0.
\end{cases}
\end{equation*}

It is easy to check that $f_{\theta}$ is increasing (resp. decreasing)  on $(-\infty,0)$, decreasing (resp. increasing) on $(0,+\infty)$ if $\theta<-1$ (resp. $\theta >-1$) and is constant equal to $1$ if $\theta=-1$. It follows that
\begin{equation*}
m(\theta)=
\begin{cases}
\vspace{1mm}
\min (f_{\theta} (\varphi_-), f_{\theta} (\varphi_+)), \quad 0 \notin [\varphi_-, \varphi_+]\\
\vspace{1mm}
\min (f_{\theta} (\varphi_-), f_{\theta} (\varphi_+), f_{\theta} (0)), \quad 0 \in [\varphi_-\; ; \; \varphi_+]
\end{cases}.
\end{equation*}

\begin{theorem}
\label{th:pressure-cooperative}
The pressure function $P^-$ is given by
$$P^{-} (\theta) = -\theta\,  (\log (m(\theta)) +{\bar V}^{-}). $$
Moreover, the function $P^-$ is concave, has a linear part if $\rho_- \le 1/2 \le \rho_+$, and is differentiable except for $\theta=-1$.
\end{theorem}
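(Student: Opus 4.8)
The plan is to compute the Legendre transform $P^-(\theta) = \inf_{E \ge 0}\{\theta E - S^-(E)\}$ directly from the explicit formula for $S^-$ provided by Theorem \ref{th:ent2}, and to verify that the result collapses into the compact expression $-\theta(\log m(\theta) + \bar V^-)$. First I would change variables, writing $F = E + \bar V^-$, so that $\theta E - S^-(E) = \theta F - \theta \bar V^- - S^-(E)$, and the minimization over $E$ in the energy band $[E^-_{-\infty}, E^+_{-\infty}]$ becomes a minimization over $F$ in the corresponding interval. Since $S^-$ is concave (by Theorem \ref{th:ent2}), the infimum defining the Legendre transform is attained either at an interior critical point where $(S^-)' = \theta$, or at an endpoint of the energy band; one must also keep track of the constraint $E \ge 0$, which only matters when $\bar V^- \le 0$ forces the lower endpoint up to $-\bar V^-$.

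The key step is to recognize in each of the three regimes of Theorem \ref{th:ent2} that $S^-$ is built out of the equilibrium pieces $S_{\rho_\pm,\rho_\pm}$ and the linear piece $F \mapsto -F$ (coming from the flat profiles $\mathcal{M}^E_-$ where $\mathbb{S}(\rho) = -F$). For the linear branch, $\theta E - S^-(E) = (\theta+1)F - \theta\bar V^-$, which is minimized at the relevant endpoint of the linear segment; for the $S_{\rho,\rho}$ branches one uses \eqref{eq:entropysimple} and \eqref{Pfunction} to get $\inf_F\{\theta F - S_{\rho,\rho}(-F)\} = P(\varphi,-\theta) - \theta\bar V^-$ over the appropriate sub-interval. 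The content of the theorem is then the bookkeeping identity: for each $\theta$ the minimum over all branches equals $-\theta \log f_\theta(\varphi_\pm) - \theta\bar V^-$ or, when $0\in[\varphi_-,\varphi_+]$, possibly $-\theta\log f_\theta(0) - \theta\bar V^-$, i.e. exactly $-\theta\log m(\theta) - \theta\bar V^-$ given the displayed formula for $m$. I would verify this by checking that $\theta P(\varphi_\pm,-\theta)/(-\theta)$, suitably written, equals $\log f_\theta(\varphi_\pm)$: indeed $P(\varphi,-\theta) = -\theta\log(1+e^\varphi) - \log(1+e^{-\varphi\theta})$, so $-P(\varphi,-\theta)/\theta = \log(1+e^\varphi) + \tfrac{1}{\theta}\log(1+e^{-\varphi\theta})$, and replacing $\varphi$ by $-\varphi$ inside gives the argument of the exponential defining $f_\theta$. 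A short case analysis on the sign and magnitude of $\theta$ (using that $f_\theta$ is unimodal with the monotonicity described before the theorem, and the stated formula for $m(\theta)$ as a minimum of two or three values) then identifies which branch of $S^-$ realizes the infimum for each $\theta$, matching the endpoint structure of the energy bands in Proposition \ref{lem:3123}.

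Finally, concavity of $P^-$ is automatic as a Legendre transform, but to get the qualitative claims I would argue directly on $\theta \mapsto -\theta(\log m(\theta) + \bar V^-)$: differentiability away from $\theta = -1$ follows from the fact that $m(\theta)$ is a minimum of the smooth functions $f_\theta(\varphi_-), f_\theta(\varphi_+)$ (and $f_\theta(0)$ when $0 \in [\varphi_-,\varphi_+]$), so $\log m$ is piecewise smooth with corners only where two of these cross; one checks, using the monotonicity of $f_\theta$ in $t$ flipping at $\theta = -1$ and the fact that $f_{-1} \equiv 1$, that the only genuine non-differentiability of the product $\theta(\log m(\theta) + \bar V^-)$ occurs at $\theta = -1$, where $\log m$ has a corner but is multiplied by $\theta$ which vanishes — wait, it does not vanish at $-1$; rather, the corner of $\log m$ at $\theta=-1$ is precisely where $m$ switches from being governed by one endpoint to the other, producing the kink in $P^-$. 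The linear part when $\rho_- \le 1/2 \le \rho_+$ comes from the regime where $0 \in [\varphi_-,\varphi_+]$ and the minimizing branch is $f_\theta(0)$: on the corresponding $\theta$-interval $\log m(\theta) = \theta t^*/2 + \log(1+e^{-t^*})$ evaluated at $t^* = 0$ is not linear in $\theta$ in general, so the linear part instead traces back to the linear segment of $S^-$ (the $\mathcal{M}^E_-$ profiles) whose Legendre dual is an affine function of $\theta$; I would pin this down by matching with the $-\log 2$ appearing at the bottom of the energy band in Proposition \ref{lem:3123}. The main obstacle is the last piece of bookkeeping: organizing the case analysis so that the three-regime formula for $S^-$, the endpoint constraints from the energy bands, and the definition of $m(\theta)$ all line up without gaps — this is routine but lengthy, which is why it is deferred to the Appendix.
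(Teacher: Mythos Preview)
Your strategy is correct: directly taking the Legendre transform of the piecewise formula for $S^-$ from Theorem~\ref{th:ent2}, matching the equilibrium branches to $P(\varphi_\pm,-\theta)$ and the linear branch to an affine function of $\theta$, does yield $-\theta(\log m(\theta)+\bar V^-)$. The key algebraic identity you need, $P(\varphi,-\theta)=-\theta\log f_\theta(\varphi)$, is indeed true (your sentence about ``replacing $\varphi$ by $-\varphi$'' is slightly misleading---no replacement is needed, since $\log(1+e^a)-\log(1+e^{-a})=a$ makes the two expressions coincide as written).

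The paper, however, runs the argument in the \emph{reverse} direction: it takes the claimed formula $P^-(\theta)=-\theta(\log m(\theta)+\bar V^-)$, computes its Legendre transform $\tilde S^-(E)=\inf_\theta\{\theta E-P^-(\theta)\}$ case by case, and checks that $\tilde S^-$ coincides with the $S^-$ of Theorem~\ref{th:ent2}. Since the Legendre transform is an involution on concave functions, this is equivalent to your approach. The advantage of the paper's route is organisational: starting from the explicit $P_0(\theta)=-\theta\log m(\theta)$, one differentiates a single closed-form expression and reads off the image of $P_0'$ on each half-line $\theta\lessgtr-1$, which immediately identifies the three pieces of $S^-$ and makes the non-differentiability at $\theta=-1$ and the linear part (when $0\in[\varphi_-,\varphi_+]$ one has $m(\theta)=f_\theta(0)$ for $\theta>-1$, giving $P_0(\theta)=-(\theta+1)\log 2$) transparent. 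Your forward approach requires instead keeping track of which branch of $S^-$ realises the infimum for each $\theta$ and handling the endpoint constraints from Proposition~\ref{lem:3123}; this works but, as you yourself note in your final paragraph, the bookkeeping is heavier and your discussion of the kink at $\theta=-1$ and of the linear part is not yet clean.
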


The proof of this theorem is postponed to Appendix \ref{sec:A4}.

\begin{center}
\begin{figure}[h!]
\begin{tabular}{cc}
 {\includegraphics[scale=0.4]{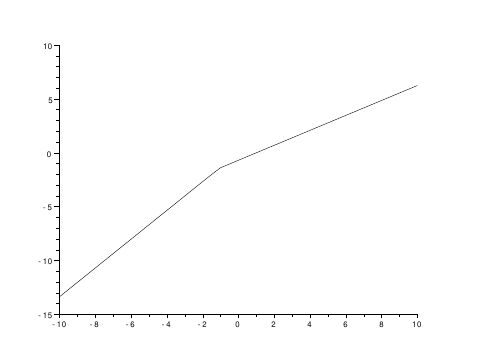}}
&
{ \includegraphics[scale=0.4]{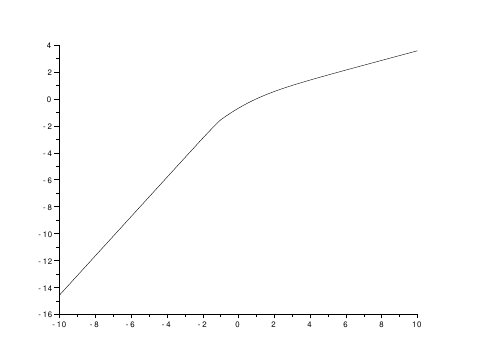}}
\end{tabular}
\caption{Graph of the function $P^-$ for $\rho_-=0.15$, $\rho_+=0.95$ (left) and for $\rho_-=0.1$, $\rho_+=0.3$ (right).}
\end{figure}
\end{center}

\subsection{Consequences}
\begin{theorem}
\label{th:cons2}
In the thermodynamic limit, the Gibbs-Shanonn entropy of the non-equilibrium stationary state defined by
\begin{equation*}
{\mc S} (\mu_{ss,N}) =  \sum_{\eta \in \Omega_N} \left[ - \mu_{ss, N} (\eta) \log (\mu_{ss,N} (\eta)) \right]
\end{equation*}
is equal to the Gibbs-Shanonn entropy of the local equilibrium state, i.e.
\begin{equation*}
\lim_{N \to +\infty} \cfrac{{\mc S} (\mu_{ss,N})}{2N+1} = {\bb S} ({\bar \rho}).
\end{equation*}
Moreover, in the case $\rho_+>\rho_- \ge 1/2$ or $\rho<\rho_+ \le1/2$, the fluctuations are Gaussian with a variance $\sigma=\chi(\bar \rho)[(-s)'(\bar \rho)]^2$. In the case $\rho_-\leq1/2\leq \rho_+$ the fluctuations are not Gaussian.
\end{theorem}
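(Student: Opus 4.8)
The plan is to deduce the three assertions — convergence of the normalized Gibbs--Shannon entropy to $\mathbb S(\bar\rho)$, the Gaussian (resp. non-Gaussian) nature of the fluctuations, and the value of the variance — from the explicit formula for $S^-$ in Theorem \ref{th:ent2} together with the relation between $S^-$, $J^-(E)=E-S^-(E)$, and the large deviations of the random variables $Y_N(\eta)=-(2N+1)^{-1}\log\mu_{ss,N}(\eta)$. First I would establish the analogue of the convexity discussion carried out before Theorem \ref{th:cons1}: I need to show that $J^-(E)=E-S^-(E)$ is a nonnegative convex function vanishing at the unique point $E=\mathbb S(\bar\rho)$. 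Concavity of $S^-$ is given by Theorem \ref{th:ent2}, so $J^-$ is convex; nonnegativity of $J^-$ is the statement $S^-(E)\le E$ which, on each branch, reduces to the inequality $S_{\rho,\rho}(-E)\le E$ for the relevant reservoir density $\rho\in\{\rho_-,\rho_+,\rho_0\}$ shifted by $\bar V^-$, an elementary one-variable fact about the function in \eqref{eq:entropysimple}. The location of the zero: on the cooperative side $\bar V^- = \log(\max_{\rho\in[\rho_-,\rho_+]}\chi(\rho))$, and one checks case by case (using $s(\rho_0)$, $s(\rho_\pm)$ as the breakpoints) that $E-S^-(E)=0$ forces $E+\bar V^-$ to equal the value at which the corresponding $S_{\rho,\rho}$ touches its tangent of slope $-1$, namely $E_0(\rho)=-\rho\log(1-\rho)-(1-\rho)\log\rho$; since $-\mathbb S(\bar\rho)=\tfrac12\int s(\bar\rho)$ and $\bar\rho$ is the constant reservoir density (or the shock profile on the first-order line), this gives $E=\mathbb S(\bar\rho)$, and strict convexity of the relevant branch of $S^-$ near that point gives uniqueness.

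Granting this, the first assertion follows from Varadhan's lemma exactly as in the symmetric case \cite{BL}: by \eqref{eq:pressure-cumul} the pressure $P^-(\theta)$ is the scaled cumulant generating function of $\{Y_N\}$, and $\mathcal S(\mu_{ss,N})=\mathbb E_{\mu_{ss,N}}[(2N+1)Y_N]$, so $(2N+1)^{-1}\mathcal S(\mu_{ss,N})=\tfrac{d}{d\theta}\big|_{\theta=1} P^-(\theta)$ up to the usual justification of the interchange of limit and derivative, which is legitimate because $P^-$ is differentiable at $\theta=1$ (Theorem \ref{th:pressure-cooperative} asserts differentiability away from $\theta=-1$, and $1\neq-1$); this derivative equals $E^\star$, the unique minimizer of $\theta E-S^-(E)$ at $\theta=1$, i.e. the unique zero of $J^-$, which is $\mathbb S(\bar\rho)$. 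Equivalently one can argue directly from the large deviation principle for $Y_N$ with good rate function $J^-$: since $J^-$ has a unique zero at $\mathbb S(\bar\rho)$, $Y_N\to\mathbb S(\bar\rho)$ in probability, and a uniform integrability bound coming from the finiteness of $P^-(\theta)$ for $\theta$ in a neighbourhood of $1$ upgrades this to $L^1$ convergence of $Y_N$, hence of its mean.

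For the fluctuation statement I would analyze $Z_N:=\sqrt{2N+1}\,(Y_N-\mathbb S(\bar\rho))$. Its behaviour is governed by the local structure of $J^-$ at its minimum: if $J^-$ is twice differentiable at $E=\mathbb S(\bar\rho)$ with $(J^-)''(\mathbb S(\bar\rho))=\sigma^{-1}>0$, a standard sharp-large-deviation / local central limit argument (as in \cite{BL}) gives $Z_N\Rightarrow\mathcal N(0,\sigma)$. In the cases $\rho_-,\rho_+\ge 1/2$ or $\rho_-,\rho_+\le 1/2$, Theorem \ref{th:ent2} shows that near the zero of $J^-$ the entropy $S^-$ coincides with a single smooth branch $S_{\bar\rho,\bar\rho}(-(E+\bar V^-))$ (the breakpoints $s(\rho_\pm)$ do not coincide with $E_0(\bar\rho)$ because $E_0(\rho)$ lies strictly to the right of the maximum of $S_{\rho,\rho}$, while $s(\rho)=-S_{\rho,\rho}$ at that maximum value region — this is precisely the inequality $E_0(\rho)\ge-\varphi/2+\log(1+e^\varphi)$ recalled before Theorem \ref{th:cons1}), so $J^-$ is $C^2$ there and $\sigma=S_{\bar\rho,\bar\rho}''(\mathbb S(\bar\rho))=\chi(\bar\rho)[(-s)'(\bar\rho)]^2$, reproducing the local-equilibrium variance. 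In the MC phase $\rho_-\le 1/2\le\rho_+$, however, the first branch of $S^-$ in Theorem \ref{th:ent2} equals the \emph{linear} function $-(E+\bar V^-)$ on the interval $E+\bar V^-\le s(\rho_0)$, and one checks that the zero of $J^-$ sits in the interior of that linear piece (since $\bar V^-=-\log(\chi(\rho_0))=\log(1/4)$ on the relevant part gives $\mathbb S(\bar\rho)+\bar V^-<s(\rho_0)$ strictly): hence $J^-\equiv 0$ on a whole subinterval rather than having a nondegenerate quadratic minimum. Then $\mathrm{Var}(Y_N)$ does not decay like $(2N+1)^{-1}$ and $Z_N$ cannot be asymptotically Gaussian; I would make this precise either by exhibiting a lower bound on $\mathrm{Var}(Y_N)$ of larger order from the flat rate function, or — matching the remark in the introduction — by transporting the non-Gaussian density fluctuations of \cite{DEL,DLS3} through the supremum in \eqref{eq:1.9}, whose maximizer in this regime is the nontrivial family $\mathcal M^E_-$ rather than a single profile. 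The main obstacle is this last point: converting "the rate function is flat at its zero" into a clean description of the actual fluctuations of $Y_N$ requires either a refined (non-Gaussian) local limit theorem for $Y_N$ under $\mu_{ss,N}$ or a direct identification of the limiting law via the density large deviations, and pinning down that limiting law — as opposed to merely ruling out Gaussianity — is the delicate step.
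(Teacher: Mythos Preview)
Your overall strategy matches the paper's: use the explicit form of $S^-$ from Theorem \ref{th:ent2}, study $J^-(E)=E-S^-(E)$, locate its unique zero at $\mathbb S(\bar\rho)$, and read off the fluctuation behaviour from the local shape of $J^-$ at that zero. The treatment of the cases $\rho_-,\rho_+\le 1/2$ and $\rho_-,\rho_+\ge 1/2$ is essentially what the paper does (the paper checks interval by interval that the candidate zero on each branch actually lies in the corresponding interval for exactly one branch, namely the $S_{\bar\rho,\bar\rho}$ branch, and then Taylor expands there).

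There is, however, a genuine error in your analysis of the MC phase $\rho_-\le 1/2\le\rho_+$. On the linear branch $S^-(E)=-(E+\bar V^-)$ one has
\[
J^-(E)=E-S^-(E)=E+(E+\bar V^-)=2E+\bar V^-,
\]
which is affine with slope $2$, \emph{not} identically zero. So $J^-$ is not flat on any subinterval; it has a unique zero at $E_0=-\tfrac12\bar V^-=\log 2=\mathbb S(1/2)$. Moreover this zero is not in the interior of the linear piece but sits exactly at the bottom of the energy band $E_{-\infty}^-=-\bar V^--\log 2=\log 2$ (Proposition \ref{lem:3123} with $\bar V^-=-2\log 2$). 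The non-Gaussianity therefore comes from the fact that $J^-$ is \emph{linear} at its minimum (one-sided slope equal to $2$, with $J^-=+\infty$ to the left of the energy band), so there is no quadratic well; it is not the ``flat rate function'' mechanism you describe. Your subsequent heuristics about $\mathrm{Var}(Y_N)$ and the appeal to an interval of zeros of the rate function are based on this wrong picture and should be replaced by the simple observation that a linear (rather than quadratic) behaviour of $J^-$ at its zero already rules out a Gaussian limit for $\sqrt{2N+1}\,(Y_N-\mathbb S(\bar\rho))$.
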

\section{Proofs}
\subsection{Proof of Proposition \ref{EBcompetitive}.}

\quad

Recall from \eqref{energy band} that $E^{-}_{+\infty}= \inf_{\rho \in {\mc M}} \left\{ {\bb S} (\rho) +V^{+} (\rho) \right\}$. The computation of the bottom of the energy band is easy since we have
\begin{equation*}
\begin{split}
E^{-}_{+\infty}&=\inf_{\rho \in {\mc M}} \inf_{\varphi \in \Phi} {\mc H} (\rho, \varphi) -{\bar V}^+ \\&=  \inf_{\varphi \in \Phi} \inf_{\rho \in {\mc M}}{\mc H} (\rho, \varphi) -{\bar V}^+\\
&= \inf_{y \in [-1,1]} \frac{1}{2}\Big\{ (y+1) \left( \varphi_- \wedge 0 -\log(1+e^{\varphi_-}) \right) + (1-y) \left( \varphi_+ \wedge 0 -\log(1+e^{\varphi_+}) \right)\Big\} -{\bar V}^+\\
&= \inf \Big\{  \left( \varphi_- \wedge 0 -\log(1+e^{\varphi_-}) \right) \, ,\, \left( \varphi_+ \wedge 0 -\log(1+e^{\varphi_+}) \right) \Big\} -{\bar V}^+.
\end{split}
\end{equation*}

We  compute now the top of the energy band. Recall from \eqref{energy band} that $$E^{+}_{+\infty}=  \sup_{\rho \in {\mc M}} \left\{ {\bb S} (\rho) +V^{+} (\rho) \right\}.$$ For each profile $\rho \in {\mc M}$, we introduce the non-decreasing continuous and almost everywhere differentiable function
\begin{equation}\label{Hfunction}
H_{\rho} (x) = \int_{-1}^x (1-\rho (z)) dz
 \end{equation}
and the constant $m_{\rho}= H_{\rho} (1) \in [0,2]$. Let $y_{\rho} \in [-1,1]$ the infimum of the points of $[-1,1]$ where the infimum of the continuous function $y \to y \xi_0 -H_{\rho}(y)$ is attained. Then,
\begin{equation*}
\begin{split}
E_{+\infty}^{+}&=\sup_{\rho \in {\mc M}} \inf_{\varphi \in \Phi} {\mc H}(\rho, \varphi) -{\bar V}^+ \\
&= \frac{(\varphi_+ -\varphi_-) }{2} \sup_{m \in [0,2]} \left\{ \cfrac{\varphi_+}{\varphi_+ -\varphi_-} m - \cfrac{{2\bar V}^+}{\varphi_+ -\varphi_-} -{\hat \xi}_0 + {\bb U} (m) \right\}
\end{split}
\end{equation*}
with
$${\bb U}(m) = \sup_{\substack{\rho \in {\mc M},\\ H_{\rho} (1) =m}} \inf_{y \in [-1,1]} \left\{ y \xi_0 -H_{\rho} (y) \right\}.$$
We claim that ${\bb U} (m) =\inf\{ -\xi_0\, , \, \xi_0-m\}.$ It is trivial that ${\bb U} (m) \le \inf\{ -\xi_0, \xi_0-m\}$ (take $y=1,-1$ in the variational formula). For $m \ne \xi_0$, the supremum can be obtained by taking the piecewise linear function $H_{\rho}$ such that $H_{\rho}$ is linear on $[-1,0]$ and on $[0,1]$, with $H_{\rho} (-1)=0$, $H_{\rho} (1)=m$ and
\begin{equation*}
H_{\rho} (0) =
\begin{cases}
\vspace{0.1cm}
\xi_0, \quad - \xi_0 \le \xi_0 -m, \quad m > \xi_0,\\
\vspace{0.1cm}
0, \quad -\xi_0 \le \xi_0 -m, \quad m < \xi_0,\\
\vspace{0.1cm}
1, \quad -\xi_0 \ge \xi_0 -m, \quad m-\xi_0 \ge 1,\\
\vspace{0.1cm}
m-\xi_0, \quad  -\xi_0 \ge \xi_0 -m, \quad m-\xi_0 \le 1.
\end{cases}
\end{equation*}
In the case $m=\xi_0$, there are two profiles $H_{\rho}$ for which the supremum is obtained, one with $H_{\rho} (0)= \xi_0$, the other one with $H_{\rho} (0)=0$. It follows that
\begin{equation*}
\begin{split}
E_{+\infty}^{+} &= \frac{(\varphi_+ -\varphi_-)}{2} \sup_{m \in [0,2]} \left\{ \cfrac{\varphi_+}{\varphi_+ -\varphi_-} m - \cfrac{{2\bar V}^+}{\varphi_+ -\varphi_-} -{\hat \xi}_0 + \inf\{ \xi_0 -m, -\xi_0\}\right\}\\
&= \frac{(\varphi_+ -\varphi_-)}{2} \left[ \sup_{m \in [2\xi_0, 2]} \left\{ \cfrac{\varphi_-}{\varphi_+ - {\varphi_-}} m +(\xi_0 -{\hat \xi}_0) \right\} \bigvee \sup_{m \in [0,2\xi_0]} \left\{ \cfrac{\varphi_+}{\varphi_+ - {\varphi_-}} m -(\xi_0 +{\hat \xi}_0) \right\} \right]\\
&\quad -{\bar V}^+.
\end{split}
\end{equation*}
We have now to optimize a piecewise linear function and we get the result.

\subsection{Proof of Theorem \ref{th:ent1}.}

\quad

We notice that once the form of $S^+$ is obtained its concavity is easy to establish. The computation of ${S}^+$ is accomplished in several steps. For any $E \ge 0$, let $D:=D(E)$ be the  (possibly empty) compact convex domain of $\RR^2$ defined by
\begin{equation}
\label{eq:condm}
(y,m)\in D \Leftrightarrow \begin{cases}
\vspace{1mm}
&-1 \le y \le 1, \quad 0 \le m \le 2,\\
\vspace{1mm}
&\sup\{\xi_0, m-\xi_0\} \le E(m) \le m- (m-1)\xi_0,\\
\vspace{1mm}
&0 \le {y  \xi_0 + E(m) }\le {y +1}, \quad 0 \le {m-(y  \xi_0 +E(m)) } \le {1-y},
\end{cases}
\end{equation}
where
\begin{equation*}
E(m) := \cfrac{\varphi_+ m -2{\bar V}^+ -2E}{\varphi_+ -\varphi_-} -{\hat \xi}_0.
\end{equation*}
The fact that $D$ is a convex compact domain follows from the fact that $E(m)$ is a linear function of $m$ so that $D$ is the intersection of half-planes of $\RR^2$.\\

Define now the function $F:{D} \to \RR$  on $(y,m) \in D$ by
\begin{equation*}
F (y,m) = - (y +1) s\left( \cfrac{y\,  \xi_0 + E(m)}{y +1}\right) - (1-y )s\left( \cfrac{m- (y \, \xi_0 + E(m))}{1-y}\right).
\end{equation*}
 It is understood here that if $y=\pm 1$ then the indefinite terms are equal to $0$. The function $F$ is continuous on $D$ and smooth on $\mathring{D}$.

\begin{proposition}
\label{prop:ent1}
The entropy function ${S}^+$ is given by
\begin{equation*}
{S}^{+} (E) = \frac{1}{2} \; \sup_{(y,m) \in D } F (y,m).
\end{equation*}
\end{proposition}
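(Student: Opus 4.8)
The plan is to start from the variational formula \eqref{eq:1.9}, namely $S^+(E)=\sup\{{\bb S}(\rho):\rho\in{\mc M},\ V^+(\rho)+{\bb S}(\rho)=E\}$, and to reduce the infinite-dimensional optimization over profiles to the two-dimensional optimization over $(y,m)\in D$. Using the expression $V^+(\rho)=-{\bb S}(\rho)+\inf_{\varphi\in\Phi}{\mc H}(\rho,\varphi)-{\bar V}^+$, the constraint $V^+(\rho)+{\bb S}(\rho)=E$ becomes $\inf_{\varphi\in\Phi}{\mc H}(\rho,\varphi)={\bar V}^++E$. Since each $\varphi\in\Phi$ is a step function jumping from $\varphi_-$ to $\varphi_+$ at some point $y\in[-1,1]$, I would first compute ${\mc H}(\rho,\varphi_y)$ explicitly. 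Recalling ${\mc H}(\rho,\varphi)=\tfrac12\int_{-1}^1[(1-\rho(x))\varphi(x)-\log(1+e^{\varphi(x)})]dx$ and introducing $H_\rho(x)=\int_{-1}^x(1-\rho(z))dz$ and $m_\rho=H_\rho(1)\in[0,2]$ as in \eqref{Hfunction}, one gets that ${\mc H}(\rho,\varphi_y)$ depends on $\rho$ only through $H_\rho(y)$ and $m_\rho$; explicitly it is an affine function of $H_\rho(y)$ and $m$, so $\inf_{\varphi\in\Phi}{\mc H}(\rho,\varphi_y)$ is obtained by minimizing an affine function of $H_\rho(y)$ over $y$, which I would express in terms of $\xi_0,\hat\xi_0$ from \eqref{chi0}.

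The key observation is then a change of variables: instead of optimizing over $\rho$, optimize over the triple $(y,m,t)$ where $y$ is the location of the jump of the optimal $\varphi$, $m=m_\rho$, and $t=H_\rho(y)$. The constraint $\inf_\varphi{\mc H}(\rho,\varphi)={\bar V}^++E$ fixes $t$ as an affine function of $m$ — this is precisely the quantity $E(m)$, so $t=y\xi_0+E(m)$ after incorporating the $y$-dependence. One must also translate the admissibility conditions: given prescribed boundary values $H_\rho(-1)=0$, $H_\rho(y)=y\xi_0+E(m)$, $H_\rho(1)=m$, there exists a profile $\rho\in{\mc M}$ realizing these values if and only if $H_\rho$ can be chosen non-decreasing with slope in $[0,1]$ on each of $[-1,y]$ and $[y,1]$; this is exactly the inequalities $0\le y\xi_0+E(m)\le y+1$ and $0\le m-(y\xi_0+E(m))\le 1-y$ in \eqref{eq:condm}, together with the requirement that $y$ actually attains the infimum defining $\inf_\varphi{\mc H}$, which gives the remaining constraint $\sup\{\xi_0,m-\xi_0\}\le E(m)\le m-(m-1)\xi_0$. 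These conditions together define $D$.

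Next I would show that among all profiles $\rho$ with the prescribed $H_\rho(-1)$, $H_\rho(y)$, $H_\rho(1)$, the Gibbs–Shannon entropy ${\bb S}(\rho)=-\tfrac12\int_{-1}^1 s(\rho(x))dx=-\tfrac12\int_{-1}^1 s(1-H_\rho'(x))dx$ is maximized by the piecewise-linear choice of $H_\rho$ (linear on $[-1,y]$ and on $[y,1]$). This is a straightforward application of Jensen's inequality to the convex function $s$: since $s(1-H_\rho'(\cdot))$ has fixed integral-average on each subinterval determined by the endpoint data, the average of $s$ is at least $s$ of the average, with equality for the constant (i.e. linear-$H_\rho$) profile. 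Evaluating ${\bb S}$ on this extremal profile gives exactly $\tfrac12 F(y,m)$: the first subinterval $[-1,y]$ of length $y+1$ carries constant $\rho$-value $1-\frac{y\xi_0+E(m)}{y+1}$, contributing $-\tfrac12(y+1)s\!\left(\frac{y\xi_0+E(m)}{y+1}\right)$ after using $s(r)=s(1-r)$, and similarly for $[y,1]$. Hence $S^+(E)=\tfrac12\sup_{(y,m)\in D}F(y,m)$.

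The main obstacle I anticipate is the bookkeeping in translating the condition ``$y$ attains $\inf_{y'}\{y'\xi_0-H_\rho(y')\}$'' into clean inequalities: one has to be careful that not every $(y,m)$ with $H_\rho$ feasible actually corresponds to the infimum over $\varphi$ being attained at that particular $y$, so the domain $D$ must encode this self-consistency (this is the source of the middle line $\sup\{\xi_0,m-\xi_0\}\le E(m)\le m-(m-1)\xi_0$ and the non-negativity constraints on $t$ and $m-t$). A secondary subtlety is handling the degenerate endpoints $y=\pm1$, where the indefinite forms $0\cdot s(\pm\infty)$ appear and must be interpreted as $0$; continuity of $F$ up to $\partial D$ takes care of this, but it should be checked that the supremum is not lost by restricting to the interior. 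Once the reduction to $\sup_D F$ is in place, deriving the closed-form cases in Theorem \ref{th:ent1} is the subsequent (lengthy but elementary) optimization carried out in the later parts of the proof.
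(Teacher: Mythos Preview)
Your proposal is correct and follows essentially the same approach as the paper: rewrite the constraint $V^+(\rho)+{\bb S}(\rho)=E$ as $\inf_{y}\{y\xi_0-H_\rho(y)\}=-E(m_\rho)$, parametrize by $(y_\rho,m_\rho)$, bound ${\bb S}(\rho)$ above by $\tfrac12 F(y_\rho,m_\rho)$ via Jensen on each of $[-1,y_\rho]$ and $[y_\rho,1]$, and for the reverse inequality construct the piecewise-linear $H_\rho$ with the prescribed nodes, checking that $(y,m)\in D$ guarantees both feasibility ($H_\rho'\in[0,1]$) and that $y$ genuinely realizes the infimum. The paper makes the self-consistency step you flag as the ``main obstacle'' explicit by first deriving $\sup\{\xi_0,m-\xi_0\}\le E(m)\le m-(m-1)\xi_0$ as a necessary condition and then verifying for the piecewise-linear construction that $\inf_z\{z\xi_0-H_\rho(z)\}=-E(m)$ and $y_\rho=y$; your sketch identifies this correctly but you should be aware that this verification (not just the slope bounds) is where all three lines of \eqref{eq:condm} are used together.
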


\begin{proof}
With the notations introduced above, we have
\begin{equation*}
{S}^+ (E) = \sup_{m \in [0,2]} \sup_{\rho \in {\mc M}} \left\{ {\bb S} (\rho) \; ; \; m_\rho=m, \, \inf_{y \in [-1, 1]} \left\{ y\xi_0 -H_{\rho} (y) \right\} = -E(m)\right\}.
\end{equation*}

Assume that there exists a profile $\rho_m \in {\mc M}$ such that  $\inf_{y \in [-1, 1]} \left\{ y\xi_0 -H_{\rho_m} (y) \right\} = -E(m)$ and $ H_{\rho_m} (1)=m$. Then, by taking $y=\pm 1$ in the infimum we see that this is possible only if $-E(m) \le \inf\{-\xi_0, \xi_0 -m\}=-\sup\{\xi_0, m-\xi_0\}$. Moreover, the existence of $\rho_m$ implies that
$$-E(m) = \inf_{y \in [-1, 1]} \left\{ y\xi_0 -H_{\rho_m} (y) \right\} \ge \inf_{\substack{\rho \in {\mc M},\\ H_{\rho} (1) =m}} \inf_{y \in [-1, 1]} \left\{ y\xi_0 -H_{\rho} (y) \right\}.$$

By inverting the two infimums, the right hand side of the previous inequality can  be rewritten as
$$\inf_{y \in [-1, 1]} \Big\{y \xi_0 -\sup_{\substack{\rho \in {\mc M},\\ H_{\rho} (1) =m}} H_{\rho} (y) \Big\}.$$
Since $0 \le H^{\prime}_\rho \le 1$, we have that $\sup_{\substack{\rho \in {\mc M},\\ H_{\rho} (1) =m}} H_{\rho} (y)$ is equal to $y+1$ if $y+1 \le m$ or $m$ if $y+1 \ge m$. It follows that
$$\inf_{\substack{\rho \in {\mc M},\\ H_{\rho} (1) =m}} \inf_{y \in [-1, 1]} \left\{ y\xi_0 -H_{\rho} (y) \right\} =(m-1)\xi_0 -m.$$ Thus the existence of $\rho_m$ is only possible if $\sup\{\xi_0, m-\xi_0\} \le E(m) \le m- (m-1)\xi_0.$


Let us denote $\rho_m$ by $\rho$. Recall that $y_{\rho}$ is the smallest point in $[-1,1]$ such that the infimum in $\inf_{y\in[-1,1]} \{y\xi_0 -H_{\rho} (y) \}$ is realized. If $y_\rho \in (-1,1)$ then we have $y_{\rho} \xi_0 -H_{\rho} (y_\rho) =-E(m)$ which implies that
\begin{equation*}
\begin{split}
&\cfrac{y_{\rho} \xi_0 + E(m) }{y_{\rho} +1}= \cfrac{H_{\rho} (y_\rho) -H_{\rho} (-1) }{y_{\rho} +1} \, \in [0,1],\\
& \cfrac{m-(y_{\rho} \xi_0 +E(m)) }{1-y_\rho} = \cfrac{H_{\rho} (1) -H_{\rho} (y_\rho)}{1-y_\rho}\,  \in [0,1].
\end{split}
\end{equation*}
If $y_{\rho} =-1$, then we have $\xi_0 =E(m)$ and for any $z\ge-1$, $H_{\rho} (z) \le z \xi_0 +E(m)$ and in particular, for $z=1$, we get $m \le 2\xi_0$. If $y_{\rho} =1$, similarly, we have $\xi_0 +E (m) =m$ and $2\xi_0 \le m$.\\

Thus, if a profile $\rho$ is such that $\bar{V}^+(\rho)+{\bb S}(\rho)=E$ then $(y_\rho,m_\rho)$ belongs to the set composed of couples
 $(y,m) \in [-1,1] \times [0,2]$ satisfying

\begin{equation*}
\label{eq:condm1}
\begin{cases}
\vspace{2mm}
&\sup\{\xi_0, m-\xi_0\} \le E(m) \le m- (m-1)\xi_0,\\
\vspace{2mm}
& \cfrac{y  \xi_0 + E(m) }{y +1} \in [0,1], \quad  \cfrac{m-(y  \xi_0 +E(m)) }{1-y} \in [0,1], \quad {\text{ if }} y \in (-1,1),\\
\vspace{2mm}
&-\xi_0 + E(m) = 0, \quad m \le 2\xi_0, \quad {\text{if }} y=-1,\\
\vspace{2mm}
&\xi_0 + E(m) =m, \quad m \ge 2\xi_0, \quad {\text{if }} y=1.
\end{cases}
\end{equation*}\\

These conditions are equivalent to $(y,m) \in D$. By concavity of the function $-s$ together with Jensen's inequality, if $y_{\rho} \in [-1,1]$, we have
 \begin{equation}
 \label{eq:in123}
 \begin{split}
 {\bb S} (\rho)={\bb S} (H_{\rho}^{\prime}) &= - \frac{1}{2} \int_{-1}^{y_\rho} s(H_{\rho}^{\prime}(x)) dx - \frac{1}{2} \int_{y_\rho}^1 s(H_{\rho}^{\prime}(x))dx\\
 &\le - \frac{(y_{\rho} +1)}{2} s\left( \cfrac{y_{\rho} \, \xi_0 + E(m)}{y_{\rho} +1}\right) - \frac{(1-y_{\rho})}{2} s\left( \cfrac{m- (y_{\rho}\,  \xi_0 + E(m))}{1-y_{\rho}}\right)
 \end{split}
 \end{equation}
 with the convention that if $y_{\rho}=\pm 1$ the indefinite terms have to be replaced by $0$. This proves that
 \begin{equation*}
 {S}^+ (E) \le \frac{1}{2} \; \sup_{(y,m) \in D} \; F(y,m).
 \end{equation*}

To prove the opposite inequality, consider any $(y,m) \in D$ and let $H^y$ be the continuous function, linear on $[-1,y]$ and on $[y, 1]$ such that $H^y (-1)=0, H^{y} (1)=m, H^{y} (y) =y\xi_0 + E(m)$. Since $(y,m) \in D$, the profile $\rho$ such that $H_{\rho} =H^y$ belongs to ${\mc M}$ and satisfies $H_{\rho} (1)=m$, $y_{\rho} =y$ and  $\inf_{z} \{z \xi_0 -H_{\rho} (z)  \}=-E(m)$, i.e. ${\bar V}^+(\rho)+{\bb S}(\rho)=E$. Observe now that the equality in (\ref{eq:in123}) is satisfied for $H_{\rho}:=H^{y_{\rho}}$. This shows that $F(y,m) =2 {\bb S }(\rho) \le 2 {S}^+ (E)$ and finishes the proof.

\end{proof}

To prove Theorem \ref{th:ent1} it remains to compute the supremum appearing in the statement of Proposition \ref{prop:ent1}. This is done in Appendix \ref{sec:A1}. The last part of Theorem \ref{th:ent1} concerning the values of the maximizers is also postponed to Appendix \ref{sec:A1}.

\subsection{Proof of Theorem \ref{th:cons1}.}

\quad

We just give the proof in the case $1/2<1-\rho_-<\rho_+$ which corresponds to $0 < -\varphi_- < \varphi_+$ since the other cases are similar. Then, we have $\bar \rho =\rho_+$ and ${\bar V}^+ = \log ( \rho_+ (1-\rho_+))$. Since, in the energy band, $J^+ (E) = E+{\bar V}^+ - S_{\rho_+, \rho_+} (-(E+{\bar V}^+)) -{\bar V}^+$, we conclude that $J^+ (E) \ge 0$ with equality if and only if $-(E+{\bar V}^+)=E_0 (\rho_+)$ and $-E_{0} (\rho_+) -{\bar V}^+ $ belongs to $[E_{+\infty}^- \, , \, E_{+\infty}^+]$. This last condition is equivalent to
\begin{equation}
\label{eq:e0000}
-E_{0} (\rho_+)  \in [-\log (1+e^{\varphi_+}) \, , \, W(\varphi_-,\varphi_+)].
\end{equation}
Since $-E_0 (\rho_+)= -(1-\rho_+ ) \log (1+e^{-\varphi_+})  - \rho_+ \log (1+e^{\varphi_+})$ we get easily that $-E_{0} (\rho_+)  \ge -\log (1+e^{\varphi_+})$. To prove the other inequality we write $W(\varphi_-, \varphi_+) + E_{0} (\rho_+) = g(\varphi_-)$,
where the function $g:[-\varphi_+, 0] \to \RR$ is defined by
\begin{equation*}
g(x)= (1-\rho_+) \log (1+e^{-\varphi_+ }) + \rho_+ \log (1+e^{\varphi_+}) + \cfrac{x \log (1+e^{\varphi_+}) -\varphi_+ \log (1+e^{x})}{\varphi_+ - x}.
\end{equation*}
 Since
 $$g' (x) = \frac{\varphi_+} {(\varphi_+ -x)^2} \left\{ \log (1+e^{\varphi_+}) + \log (1+e^x) - (\varphi_+ -x) \frac{e^{x}}{1+e^x}  \right\}$$
there exists ${\tilde \varphi}_x \in [x,\varphi_+]$ such that
$$g'(x)= \frac{\varphi_+} {(\varphi_+ -x) }\left\{ \frac{e^{{\tilde \varphi}_x}}{1+e^{{\tilde \varphi}_x}} - \frac{e^{x}}{1+e^x}  \right\} \ge 0.$$
The last inequality follows from the fact that the function $t \to e^t /(1+e^t)$ is increasing. Thus $g(\varphi_-) \le g(-\varphi_+)$ and $g(-\varphi_+)= (\rho_+ -1/2) \varphi_+ \ge 0$ which proves (\ref{eq:e0000}). Thus, $J^+$ is a non-negative convex function vanishing only for $-E_0 (\rho_+) -{\bar V}^+= (-s)(\rho_+)={\bb S} (\bar \rho)$. The function $J^+$ is smooth around ${\bb S} (\bar \rho)$ and $J^+ (E)=E+{\bar V}^+ - S_{\rho_+, \rho_+} (-(E+{\bar V}^+)) -{\bar V}^+$. By performing a second Taylor expansion of $J^+$ around ${\bb S}({\bar \rho})$ we can determine the value of the variance of the Gaussian fluctuations and we get the desired result.

\subsection{Proof of Proposition \ref{lem:3123}.}

\quad

Let us first compute the top of the energy band.
Recall from \eqref{energy band} that $$E^{+}_{-\infty}=  \sup_{\rho \in {\mc M}} \left\{ {\bb S} (\rho) +V^{-} (\rho) \right\}.$$ We have
\begin{equation*}
\sup_{\rho \in {\mc M}} \sup_{\varphi \in {\mc F}} {\mc H}(\rho, \varphi) = \sup_{\varphi \in {\mc F}} \sup_{\rho \in {\mc M}} {\mc H} (\rho,\varphi).
\end{equation*}
For any $\varphi \in {\mc F}$ we define $x_{\varphi} =\sup \left\{ x \in [-1,1] \; : \; \varphi (x) \le 0 \right\}$. Then, $\sup_{\rho \in {\mc M}} {\mc H} (\rho,\varphi)$ is realized for $\rho(x)={\bf 1}\{-1\leq x\leq x_{\varphi}\}$. It follows that
\begin{equation*}
\begin{split}
&\sup_{\rho \in {\mc M}} \sup_{\varphi \in {\mc F}} {\mc H}(\rho, \varphi) =\sup_{x \in [-1,1]} \sup_{ \varphi \in {\mc F}_x} \Big\{ \frac{1}{2}\int_x^1  (\varphi(u) -\log(1+e^{\varphi(u)}))du - \frac{1}{2}\int_{-1}^x  \log(1+e^{\varphi(u)}) du\Big\}
\end{split}
\end{equation*}
where ${\mc F}_x$ is the set of functions $\varphi \in {\mc F}$ such that ${\varphi} (x) =0$ if $x\in (-1,1)$, $\varphi (x) \le 0$ if $x=1$ and $\varphi (x) \ge 0$ if $x=-1$. Assume for example that $\varphi_- \leq 0 \leq  \varphi_+$ (the other cases are similar). By using the fact that the function $t \to t -\log (1+e^t)$ is increasing, we see that
\begin{equation*}
\sup_{\rho \in {\mc M}} \sup_{\varphi \in {\mc F}} {\mc H}(\rho, \varphi)
=\sup_{x \in [-1,1]}  \Big\{ -\frac{(x+1)}{2} \log (1+e^{\varphi_-}) + \frac{(1-x)}{2} \left( \varphi_+ -\log(1+e^{\varphi_+})\right)   \Big\}
\end{equation*}
because the supremum over ${\mc F}_x$ is realized by a sequence of functions in ${\mc F}_x$ converging to the step function $\varphi_{-} {\bf 1}\{-1\leq u\leq x\} + \varphi_+ {\bf 1} \{ x\leq u\leq1\}$. The last supremum is equal to $- \log (1+ e^{-\varphi_0})$ which concludes the computation of the top of the energy band.\\

We compute now the bottom of the energy band. Recall from \eqref{energy band} that $$E^{-}_{-\infty}= \inf_{\rho \in {\mc M}} \left\{ {\bb S} (\rho) +V^{-} (\rho) \right\}.$$ We first recall some results of  \cite{DLS3}. Recall the definition of $H_\rho$ from \eqref{Hfunction} and let $G_{\rho}$ be the convex envelop of $H_{\rho}$, i.e. the biggest convex function $G$ such that $G \leq H_{\rho}$. We recall that any convex function is almost everywhere differentiable. Then, the supremum in (\ref{eq:Vminus}) is given by $- {\bb S} (\rho) +{\mc H} (\rho, \varphi_{G_\rho})+ {\bar V}^-$ where
\begin{equation*}
\varphi_{G_\rho} (x) =
\begin{cases}
\vspace{2mm}
&\varphi_-, \quad G^{\prime}_{\rho} (x) \le \rho_-, \\
\vspace{2mm}
&\log \left( \cfrac{G^{\prime}_{\rho} (x) }{1- G_{\rho}^{\prime} (x)} \right), \quad \rho_- \le G^{\prime}_{\rho} (x) \le \rho_+,\\
\vspace{2mm}
&\varphi_+, \quad G^{\prime}_{\rho} (x) \ge \rho_+.
\end{cases}
\end{equation*}
Moreover, by \eqref{H function} we have that $${\mc H} (\rho, \varphi_{G_\rho})=  \frac{1}{2}\int_{-1}^{1} [ G^{\prime}_{\rho}(x)  \varphi_{G_\rho}(x)  - \log (1+e^{\varphi_{G_\rho}(x)})\,] dx \,.$$ This shows that ${\mc H} (\rho, {\varphi_{G_{\rho}}})$ does not depend on $\rho$ but only on $G_{\rho}$. Since as $\rho$ describes the set ${\mc M}$, $G_{\rho}$ describes exactly the set of non-decreasing convex functions $G$ such that $G  (-1)=0$ and $0 \le G^{\prime} \le 1$, then we have
\begin{equation*}
\inf_{\rho \in {\mc M}} \sup_{\varphi \in {\mc F}} {\mc H} (\rho, \varphi) =\inf_{\rho \in {\mc M}} {\mc H} (\rho, \varphi_{G_{\rho}})= \inf_{G} \left\{  \frac{1}{2}\int_{-1}^{1} [ G^{\prime}(x)  \varphi_{G}(x)  - \log (1+e^{\varphi_{G}(x)})\,] dx \,\right\}
\end{equation*}
where the last infimum is carried over the set of non-decreasing convex functions $G$ such that $G (-1)=0$ and $0 \le G^{\prime} \le 1$.

Let us now consider the set of non-decreasing functions $g$ such that $g \in [0,1]$ and for such $g$ let $T (g):[-1,1]\to{[\varphi_-,\varphi_+]}$ be defined by
\begin{equation*}
T(g) (x) =
\begin{cases}
\vspace{2mm}
&\varphi_-, \quad g (x) \le \rho_-, \\
\vspace{2mm}
&\log \left(\cfrac{ g (x) }{1- g (x)} \right), \quad \rho_- \le g (x) \le \rho_+,\\
\vspace{2mm}
&\varphi_+, \quad g(x) \ge \rho_+.
\end{cases}
\end{equation*}
We have then
\begin{equation}
\label{eq:infenergyband435}
\begin{split}
&\inf_{\rho \in {\mc M}} \sup_{\varphi \in {\mc F}} {\mc H} (\rho,\varphi)= \inf_g \left\{ \frac{1}{2}\int_{-1}^1  [g(x) T(g)(x) -\log (1+e^{T(g)(x)}) ] dx\right\}
\end{split}
\end{equation}
where the infimum is taken over the set of non-decreasing functions $g:[-1,1] \to [0,1]$.
To each non-decreasing function $g: [-1,1] \to [0,1]$ we associate $-1 \le x_- \le x_+ \le 1$ and $0 \le y_- \le \rho_- < \rho_+ \le y_+ \le 1$ defined by
\begin{equation*}
\begin{split}
&x_- = \sup \{ x \in [-1,1] \; : \; g(x) \le \rho_-\}, \quad x_+ = \inf\{ x \in [-1,1] \; : \; g(x) \ge \rho_+\},\\
&y_- = \cfrac{1}{x_- +1} \int_{-1}^{x_-} g(x)dx, \quad y_+ = \cfrac{1}{1-x_+}\int_{x_+}^1 g(x) dx.
\end{split}
\end{equation*}
In the case $x_- =-1$ (resp. $x_+ =1$) we adopt the convention that $y_-=\rho_-$ (resp. $y_+=\rho_+$). With these definitions we can write
\begin{equation*}
\begin{split}
&\int_{-1}^1  [g(x) T(g)(x) -\log (1+e^{T(g)(x)})] dx\\=
 &\Big\{ (x_- +1) \left( \varphi_- y_- -\log(1+e^{\varphi_-}) \right) + (1-x_+) \left( \varphi_+ y_+ -\log(1+e^{\varphi_+}) \right)  \Big\}+ \int_{x_-}^{x_+} s(g(x)) dx.
\end{split}
\end{equation*}
The infimum can be computed by fixing first $x_-, x_+,y_-, y_+$, optimizing separately over the restrictions of $g$ to $[-1,x_-]$, $[x_-, x_+]$ and $[x_+, 1]$ and then taking the infimum over $x_-, x_+,y_-, y_+$. These parameters shall satisfy $-1 \le x_- \le x_+ \le 1$ and $0 \le y_- \le \rho_- < \rho_+ \le y_+ \le 1.$

   By convexity of the function $s(\cdot)$ and by Jensen's inequality we get that the infimum of $\int_{x_-}^{x_+} s(g(x))dx$ is given by $(x_+ -x_-) \inf_{\rho\in [\rho_-, \rho_+]} s(\rho)$. It follows that (\ref{eq:infenergyband435}) is equal to
\begin{equation*}
\begin{split}
\inf_{\substack{-1 \le x_- \le x_+ \le 1\\ 0 \le y_- \le \rho_- < \rho_+ \le y_+ \le 1}} &\frac{1}{2}\Big\{ (x_- +1) \left( \varphi_- y_- -\log(1+e^{\varphi_-}) \right) + (1-x_+) \left(\varphi_+ y_+ -\log(1+e^{\varphi_+}) \right) \\
&+ \left. (x_+ -x_-)\left(\inf_{\rho \in[\rho_-, \rho_+]} s(\rho) \right)\right\}
\end{split}
\end{equation*}
\begin{equation*}
\begin{split}
=\inf_{\substack{-1 \le x_- \le x_+ \le 1}} & \frac{1}{2}\Big\{ (x_- +1) \left( (\varphi_- \wedge 0) \rho_{-} -\log(1+e^{\varphi_-}) \right) + (1-x_+) \left( \varphi_+ \wedge (\varphi_+ \rho_+) \right) \\
& -\log(1+e^{\varphi_+})+ (x_+ -x_-)\left(\inf_{\rho \in[\rho_-, \rho_+]} s(\rho) \right)\Big\}\\
\end{split}
\end{equation*}
We observe now that $\rho_{\pm} = e^{\varphi_{\pm}}/ (1+e^{\varphi_{\pm}})$ and that the function $t \to t e^t /(1+e^t) - \log(1+e^t)$ is even, increasing on $[0,+\infty)$ and negative. The result then follows.

\subsection{Proof of Theorem \ref{th:ent2}.}

\quad

We notice that once the form of $S^-$ is known, the fact that it is concave and continuously differentiable on its energy band is trivial. For completeness, we prove here that for $\rho_+\geq1/2\geq\rho_-$ and for $\rho_0=\rho_+$ the entropy function $S^-$ is continuously differentiable but not twice continuously differentiable. The rest of the cases is completely similar. By the expression for  $S^-$, it is enough to check that $(S^-)'(-s(\rho_+))=1$ and that $(S^-)''(-s(\rho_+))\neq 0$. But this follows from a simple computation using \eqref{eq:entropysimple} and the expression for  $s(\cdot)$.

 In order to  obtain the form of $S^-$ we start to reduce the computation of $S^{-}$ to a $4$ dimensional optimization problem. Some notations shall be introduced. Let $m = \min_{\rho \in [\rho_-,\rho_+]} (-s (\rho))$ and $M = \max_{\rho \in [\rho_-,\rho_+]} (-s (\rho))$. We define the linear functions $\gamma_{\pm} : \RR \to \RR$ by
$$\gamma_\pm(y)= y \varphi_{\pm} -\log (1+e^{\varphi_{\pm}}), \quad y \in \RR.$$

For any $E \ge 0$, let $K:=K(E)$ be the (possibly empty) compact convex domain of $\RR^4$ composed of $4$-tuples $(x_-,x_+,y_-,y_+)$ such that the following conditions are satisfied
\begin{equation*}
\begin{cases}
\vspace{1mm}
&0 \le y_- \le \rho_- < \rho_+ \le y_+ \le 1,\\
\vspace{1mm}
&-1 \le x_- \le x_+ \le 1, \\
\vspace{1mm}
& \frac{1}{2}\Big((x_- +1) (\gamma_- (y_-) +m)+ (1-x_+) (\gamma_+(y_+) +m)\Big) \ge (E +{\bar V}^-) +m,\\
\vspace{1mm}
&\frac{1}{2}\Big( (x_- +1) (\gamma_- (y_-) +M)+ (1-x_+) (\gamma_+ (y_+) +M) \Big)\le (E +{\bar V}^-) +M.
\end{cases}
\end{equation*}

Let $F:K \to \RR$ be the function defined by
\begin{equation*}
F (x_-,x_+,y_-,y_+)=\frac{1}{2}\Big((x_- +1) \left( (-s)(y_-) +\gamma_-(y_{-})\right) + (1-x_+) \left( (-s)(y_+) +\gamma_+(y_{+})\right)\Big).
\end{equation*}

\begin{proposition}
\label{prop:s-}
For any $E \ge 0$, we have that

\begin{equation*}
S^- (E) = \sup_{k \in K} F(k)  \, - \, (E+ {\bar V}^-).
\end{equation*}
Moreover $K\ne \emptyset$ if and only if $E \in [E_{-\infty}^-\,;\, E_{-\infty}^+]$.
\end{proposition}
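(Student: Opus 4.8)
The plan is to reduce the variational problem \eqref{eq:1.9} defining $S^-(E)$ to the finite-dimensional problem over $K$, exactly in the spirit of the proof of Proposition \ref{prop:ent1}, but now using the description of $V^-$ via the convex envelope $G_\rho$ obtained in the proof of Proposition \ref{lem:3123}. First I would rewrite, for a profile $\rho\in{\mc M}$, the constraint ${\bb S}(\rho)+V^-(\rho)=E$. Using \eqref{eq:Vminus} and the identity ${\mc H}(\rho,\varphi_{G_\rho})=\frac12\int_{-1}^1[G'_\rho\varphi_{G_\rho}-\log(1+e^{\varphi_{G_\rho}})]dx$ established earlier, this constraint becomes $-{\bb S}(\rho) + {\mc H}(\rho,\varphi_{G_\rho}) - {\bar V}^- + {\bb S}(\rho) = E$, i.e. a condition that depends on $\rho$ only through $G_\rho$. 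Then I introduce, as in the energy-band computation, the parameters $x_-,x_+$ (the endpoints of the interval where $G'_\rho$ takes values in $[\rho_-,\rho_+]$, equivalently where $\varphi_{G_\rho}$ is in $(\varphi_-,\varphi_+)$) and $y_-,y_+$ (the averages of $G'_\rho$ over $[-1,x_-]$ and $[x_+,1]$). One then has the decomposition
\begin{equation*}
{\mc H}(\rho,\varphi_{G_\rho}) = \frac12\Big\{(x_-+1)\gamma_-(y_-) + (1-x_+)\gamma_+(y_+)\Big\} + \frac12\int_{x_-}^{x_+} s(G'_\rho(x))\,dx .
\end{equation*}

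Next, I separate the roles of $\rho$ and $G_\rho$: the Gibbs--Shannon entropy ${\bb S}(\rho)=-\frac12\int_{-1}^1 s(\rho(x))dx$ must be maximized, so for fixed $G_\rho$ one takes $\rho$ as large as possible (concentrated near $1$) on the region contributing to the boundary terms and, on $[x_-,x_+]$, one maximizes $-\frac12\int_{x_-}^{x_+}s(\rho(x))dx$. Since $G_\rho$ only constrains $\int \rho$ on sub-intervals and $0\le G'_\rho\le1$, the optimal $\rho$ over $[-1,x_-]$ resp. $[x_+,1]$ is the constant profile $y_-$ resp. $y_+$ (by Jensen for the concave $-s$), contributing $(x_-+1)(-s)(y_-)$ and $(1-x_+)(-s)(y_+)$; this gives the function $F$. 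On $[x_-,x_+]$, the entropy term and the term $\frac12\int_{x_-}^{x_+}s(G'_\rho)$ appearing inside $V^-$ must be balanced: the constraint ${\bb S}(\rho)+V^-(\rho)=E$, after extremizing, forces $\frac12\int_{x_-}^{x_+}s(\rho(x))dx$ to lie between $(x_+-x_-)m$ and $(x_+-x_-)M$ — which, written out, are precisely the two inequality constraints defining $K$ — and within that range the contribution of the middle interval to $S^-(E)=\sup[{\bb S}(\rho)]$ collapses, after using the constraint to substitute, to the single term $-(E+{\bar V}^-)$. Here I would use convexity/concavity of $s$ and an intermediate-value argument: for any target value of $\int_{x_-}^{x_+}s(g)$ strictly between the extremes one can choose $g$ (hence $\rho$) realizing it with $\rho$ matching $G'_\rho$ on that interval, so the supremum is attained.

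Assembling the pieces gives $S^-(E)\le \sup_{k\in K}F(k) - (E+{\bar V}^-)$ from the Jensen inequalities, and the reverse inequality by exhibiting, for each $k=(x_-,x_+,y_-,y_+)\in K$, an explicit profile $\rho$ — constant $=y_\pm$ on the outer intervals, and on $[x_-,x_+]$ a profile chosen so that $\rho=G'_\rho$ there and $\frac12\int_{x_-}^{x_+}s(\rho)$ hits the value dictated by the constraint $E$ — which satisfies ${\bb S}(\rho)+V^-(\rho)=E$ and achieves ${\bb S}(\rho)=F(k)-(E+{\bar V}^-)$, much as the test profile $H^y$ was used in Proposition \ref{prop:ent1}. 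Finally, $K\neq\emptyset\iff E\in[E_{-\infty}^-;E_{-\infty}^+]$ follows because the defining inequalities of $K$ are exactly feasibility of the entropy-level constraint, and optimizing the linear functionals over $K$ (taking $x_\pm\to\pm1$, etc.) reproduces the computation of $E^\pm_{-\infty}$ in the proof of Proposition \ref{lem:3123}. The main obstacle I expect is the middle-interval argument: showing rigorously that one may simultaneously realize an arbitrary prescribed value of $\int_{x_-}^{x_+}s(\rho)$ in the admissible range \emph{and} keep $\rho$ equal to $G'_\rho$ there (so that $G_\rho$ is genuinely the convex envelope and the formula for $\varphi_{G_\rho}$ applies), i.e. that the constraint from $V^-$ and the freedom in ${\bb S}$ decouple cleanly on $[x_-,x_+]$; this requires a careful choice of $\rho$ (e.g. a two-value step profile inside $[x_-,x_+]$) and a monotonicity/continuity check, which I would defer to the Appendix.
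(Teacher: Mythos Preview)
Your overall plan is on the right track, but the paper's route avoids the very obstacle you flag at the end, and you are partly tangled by a $\rho$ versus $g=G'_\rho$ confusion. The clean step you are missing is this: since $G_\rho$ is the convex envelope of $H_\rho$, on every maximal interval where $G_\rho<H_\rho$ the function $G'_\rho$ is the constant average of $H'_\rho=1-\rho$, so Jensen for the convex $s$ gives ${\bb S}(\rho)={\bb S}(H'_\rho)\le{\bb S}(G'_\rho)$. Conversely, any admissible non-decreasing convex $G$ is realized as $G_\rho$ by the choice $\rho=1-G'$, for which $H_\rho=G$ and ${\bb S}(\rho)={\bb S}(G')$. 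These two observations immediately give
\[
S^-(E)=\sup_{g}\Big\{{\bb S}(g):\ \tfrac12\!\int_{-1}^1\!\big[g\,T(g)-\log(1+e^{T(g)})\big]=E+{\bar V}^-\Big\},
\]
where the supremum is over non-decreasing $g:[-1,1]\to[0,1]$. From here the decomposition over $[-1,x_-]$, $[x_-,x_+]$, $[x_+,1]$ is straightforward: Jensen on the outer intervals (applied to $g$, not to $\rho$) gives the terms $(x_-+1)(-s)(y_-)$ and $(1-x_+)(-s)(y_+)$; on the middle interval the value of $\tfrac12\int_{x_-}^{x_+}s(g)$ is \emph{fixed} by the constraint, and substituting it produces exactly $F(k)-(E+{\bar V}^-)$. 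The existence of a non-decreasing $g:[x_-,x_+]\to[\rho_-,\rho_+]$ with a prescribed value of $\int s(g)$ in $[(x_+-x_-)m,(x_+-x_-)M]$ is just the intermediate value theorem, and this is precisely the pair of inequalities defining $K$.

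Two concrete issues in your write-up: first, ``take $\rho$ as large as possible (concentrated near $1$)'' is not how one maximizes ${\bb S}$, and your outer-interval optimizers should be $g\equiv y_\pm$ (equivalently $\rho\equiv 1-y_\pm$), not $\rho\equiv y_\pm$. Second, your proposed test profile ``constant $=y_\pm$ on the outer intervals'' would have $G'_\rho=1-y_-\ge 1-\rho_-$ there, violating $G'_\rho\le\rho_-$; you should be prescribing $g$, not $\rho$. Once you adopt the reduction to non-decreasing $g$, there is no need to ``keep $\rho=G'_\rho$'' or worry about decoupling: you are simply optimizing over $g$, and the proof closes in a few lines as above.
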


\begin{proof}
The last part of the proposition follows directly from the computations performed during the determination of the energy band. We assume now that $E \in [E_{-\infty}^+ \, ; \, E_{-\infty}^+]$.

We use the notations introduced in the proof of Proposition \ref{lem:3123}. Then we have
\begin{equation*}
{ S}^{-} (E) = \sup_{\rho \in {\mc M}} \left\{ {\bb S} (\rho)\; ;  \; {\mc H} (\rho, \varphi_{G_\rho}) =E+{\bar V}^-\right\}.
\end{equation*}

Since, by convexity of the function $s$, we have ${\bb S} (\rho) \le {\bb S} (G^{\prime}_{\rho})$, we get
\begin{equation*}
{ S}^{-} (E) \le \sup_{G} \left\{ \; {\bb S} (G^{\prime})\; ;  \; \frac{1}{2}\int_{-1}^{1}   G^{\prime}(x) \varphi_G(x) - \log (1+e^{\varphi_G(x)}) \, dx \, =E+{\bar V}^-\right\}
\end{equation*}
where the supremum is carried over the set of non-decreasing convex functions $G$ such that $G(-1)=0$, $0 \le G' \le 1$. On the other hand, given any non-decreasing convex function $G$ such that $G(-1)=0$ and $0 \le G' \le 1$, let $\rho = 1-G' \in {\mc M}$. We have $H_{\rho}=G=G_{\rho}$ and ${\bb S} (\rho)= {\bb S} (H_{\rho}^{\prime})={\bb S} (G)$. It follows that
\begin{equation*}
{ S}^{-} (E) = \sup_{G} \left\{ \; {\bb S} (G^{\prime})\; ;  \; \frac{1}{2}\int_{-1}^{1}  G^{\prime}(x) \varphi_G(x) - \log (1+e^{\varphi_G(x)}) \, dx \,=E+{\bar V}^-\right\}.
\end{equation*}

This can be written as
\begin{equation}
\label{eq:s-g}
{ S}^{-} (E) = \sup_{g} \left\{ \; {\bb S} (g)\; :  \; \frac{1}{2}\int_{-1}^{1}  g(x) T(g)(x) - \log (1+e^{T(g)(x)}) \,dx \, =E+{\bar V}^-\right\}
\end{equation}
where the supremum is taken over the set of non-decreasing functions $g$ such that $g \in [0,1]$. Then,  the constraint in \eqref{eq:s-g} is given by
\begin{equation}
\label{eq:cons1}
\begin{split}
&\frac{1}{2}\int_{-1}^{1} g(x) T(g)(x) - \log (1+e^{T(g)(x)}) \, dx \, \\
=&\frac{1}{2}\Big( (x_- +1) \gamma_-(y_-) + (1-x_+) \gamma_+(y_{+}) + \int_{x_-}^{x_+} s(g(x)) dx\Big)\\
=& E +{\bar V}^-.
\end{split}
\end{equation}

Fix $x_-,x_+, y_-,y_+$. We decompose the integral appearing in ${\bb S} (g)$ into the three integrals corresponding to the intervals $[-1,x_-]$, $[x_-,x_+]$ and $[x_+,1]$ so that we can optimize independently over the restrictions of $g$ to $[-1,x_-]$ and $[x_+,1]$. Then the value of the integral of $(-s) (g)$ over $[x_-, x_+]$ is fixed by the constraint (\ref{eq:cons1}).

By using that $\sup_g \int_{-1}^{x_-} (-s) (g(x)) dx$ over the constraint that $\int_{-1}^{x_-} g(x)dx = (x_- +1) y_-$ is given by $(x_- +1) (-s) (y_-)$ (and similarly for  $\sup_g \int_{x_+}^{1} (-s) (g(x)) dx$), we conclude that ${ S}^- (E)$ is given by
\begin{equation*}\label{exp entropy}
{ S}^- (E)=\sup\Big\{F(x_-,x_+,y_-,y_+)-(E+{\bar V}^-):(x_-,x_+,y_-,y_+)\in{}\mathcal{A}\Big\}.
\end{equation*}
Above ${\mc A}$ is the set of $4$-tuples $(x_-,x_+,y_-,y_+)$ such that
\[
-1 \le x_- \le x_+ \le 1, \quad 0 \le y_- \le \rho_- < \rho_+ \le y_+ \le 1
\]
and that there exists a non-decreasing function $h:[x_-,x_+] \to [\rho_-,\rho_+]$ satisfying
 \begin{equation*}
\frac{1}{2} \int_{x_-}^{x_+} (-s) (h(x))dx = -(E+{\bar V}^-) +\frac{1}{2}\Big( (x_- +1) \gamma_-(y_{-}) + (1-x_+) \gamma_+(y_{+})\Big).
 \end{equation*}
This last condition can be stated as
\begin{equation}
\label{eq:cons2}
-(E+{\bar V}^-) + \frac{1}{2}\Big((x_- +1) \gamma_-(y_{-}) + (1-x_+)\gamma_+(y_{+})\Big) \in   \left[ \frac{x_+ -x_-}{2} m\, ; \, \frac{x_+ -x_-}{2} M  \right].
\end{equation}
It is easy to see that ${\mc A}=K$ and we have proved the proposition.
\end{proof}

Assume from now on that $E$ belongs to the energy band $[E_{-\infty}^- \, ; \, E_{-\infty}^+]$. We have to compute the supremum of the function $F$ over the non-empty convex compact set  $K$. To do this we first fix $y_- \in [0,\rho_-]$ and $y_+ \in [\rho_+,1]$ and optimize over the couples $(x_-,x_+)$ such that  $(x_-,x_+,y_-,y_+) \in K$.

Observe now that writing
$$(-s)(y_\pm) +\gamma_{\pm}(y_{\pm}) = (-s) (y_{\pm}) - \left[ (-s) (\rho_\pm) + (-s') (\rho_\pm) (y_{\pm} -\rho_{\pm})\right]$$
and using the concavity of $-s$, we get that $F(x_-,x_+,y_-,y_+) \le 0$ for all $(x_-,x_+,y_-,y_+)$; and $F(x_-,x_+,y_-,y_+)=0$ if and only if $(x_ \pm \mp 1) (y_{\pm} -\rho_{\pm}) =0$.

 If $-(E+{\bar V}^-)$ belongs to $[m, M]$ then by taking $x_\pm =\pm 1$, we conclude that $\sup_{k \in K} F (k) =0$
 and consequently that
\begin{equation}\label{linear entropy}
{ S}^- (E) = - (E+{\bar V}^-).
\end{equation}

 Consider now the case where $E$ belongs to the energy band but $ -(E+{\bar V}^-) \notin [m, M]$. Fix first $y_{\pm}$. In order to keep notation simple and since $y_{\pm}$ are fixed, we use the notation $\gamma_{\pm}:=\gamma_{\pm}(y_{\pm})$. We have first to maximize the function $F(\cdot,\cdot, y_-,y_+)$ in the compact convex domain $D:=D(y_-, {y_+})$ composed of $(x_-,x_+)$ such that
\begin{equation}
\label{eq:cons3}
D=\begin{cases}
\vspace{1mm}
&-1 \le x_- \le x_+ \le 1, \\
\vspace{1mm}
& \frac{1}{2}\Big((x_- +1) (\gamma_- +m)+ (1-x_+) (\gamma_++m)\Big) \ge (E +{\bar V}^-) +m,\\
\vspace{1mm}
&\frac{1}{2}\Big( (x_- +1) (\gamma_- +M)+ (1-x_+) (\gamma_+ +M) \Big)\le (E +{\bar V}^-) +M.
\end{cases}
\end{equation}
The two last conditions are obtained from \eqref{eq:cons2}.

%
%
%
%

Since $F(\cdot,\cdot,y_-,y_+)$ is an affine function, the supremum of $F(\cdot,\cdot,y_-,y_+)$ is attained at one of the extremal points of the domain $D$. Consider the lines $D_m$ and $D_M$ defined by
\begin{equation*}
\begin{split}
& D_m = \left\{ (x_-, x_+) \in \RR^2 \, ; \,\frac{1}{2}\Big( (x_- +1) \left( \gamma_- +m \right) + (1-x_+) \left( \gamma_+ +m \right)\Big) = (E +{\bar V}^-) +m\right\},\\
& D_M =\left \{(x_-, x_+) \in \RR^2 \, ; \, \frac{1}{2}\Big((x_- +1) \left( \gamma_- +M \right) + (1-x_+) \left( \gamma_+ +M \right)\Big) =
(E +{\bar V}^-) +M \right\}.
\end{split}
\end{equation*}
There are $3,4$ or $5$ of such extremal points.
The line $D_m$ intersects the lines $x_-=x_+$, $x_-=-1$ and $x_+=1$ at the points
\begin{equation*}
\begin{split}
&X_0:=\left( \cfrac{2(E+{\bar V}^-) - (\gamma_- +\gamma_+)}{\gamma_- - \gamma_+}, \cfrac{2(E+{\bar V}^-) - (\gamma_- +\gamma_+)}{\gamma_- -\gamma_+}\right),\\
&X_m:=\left( -1, 1- \cfrac{2(E+{\bar V}^-) +2m }{\gamma_+ +m}\right)\\
&Y_m:=\left(\cfrac{2(E+{\bar V}^-) +2m }{\gamma_- +m}-1,1\right),
\end{split}
\end{equation*}
respectively.
The line $D_M$ intersects the same lines at the points $X_0$,
\begin{equation*}
 \begin{split}
 &X_M:=\left( -1, 1- \cfrac{2(E+{\bar V}^-) +2M}{\gamma_+ + M}\right)\\
 &Y_M:=\left(\cfrac{2(E+{\bar V}^-) +2M }{\gamma_- +M}-1,1\right),
\end{split}
\end{equation*}
respectively.
Observe that the point $(x_-,x_+)=(-1,1)$ does not belong to the domain $D$ because $-(E+{\bar V}^-) \notin [m,M]$.

The rest of the proof consists in determining what are the extremal points of $D$ according to the position of $E$ in the energy band, find what is the supremum of $F(\cdot,\cdot,y_-,y_+)$ among these extremal points, and then to maximize over $y_-,y_+$. This is accomplished in Appendix \ref{sec:A2}.
The proof of the last statement of the theorem is also postponed to the Appendix.

\subsection{Proof of Theorem \ref{th:cons2}.}

\quad

We start by giving the proof in the case $1/2 \leq 1-\rho_- < \rho_+$, which corresponds to $0\leq{-\varphi_-}< \varphi_+$. The case $1/2 \leq \rho_+ <  1-\rho_-$ is similar and for that reason is omitted.

By the definition of ${\bar V}^-$, we have that ${\bar V}^- = \log (\chi(1/2))=-2\log(2)$. Also, by the results of the previous sections, defining $J^- (E) = E-S^-(E)$, we have for $E+{\bar V}^-\in I_1=[-\log(2);-s(\rho_+)]$, that $J^- (E)=E +  (E+{\bar V}^-)=2E+{\bar V}^-$. Since $J^-$ is linear and increasing, we conclude that  for $E+{\bar V}^-\in I_1$ is holds that $J^- (E) \ge 0$ with equality if and only if $E_0=-\frac{1}{2}{\bar V}^-=\log(2)$,
which satisfies $E_0+\bar{V}^-=-\log(2)\in I_1$. Now, for $E+{\bar V}^-\in I_2=(s(\rho_+); -\log(1+e^{-\varphi_+})]$, $J^- (E)=E - S_{\rho_+,\rho_+}(-(E+{\bar V}^-))$. As in the previous chapter we conclude that  $J^- (E) \ge 0$ with equality if and only if
$E_0:=E_0(\rho_+)=-\rho_+\log(1-\rho_+)-(1-\rho_+)\log(\rho_+)$ and $-E_0\in I_2$.
Now, we notice that by a simple computation $-E_0$ can be written as $-\log(1+e^{\varphi_+})+\frac{\varphi_+}{1+e^{\varphi_+}}$. Since $s(\rho_+)=-\log(1+e^{\varphi_+})+\frac{\varphi_+}{1+e^{-\varphi_+}}=-\log(1+e^{\varphi_+})+\frac{\varphi_+e^{\varphi_+}}{1+e^{\varphi_+}}$ and since $\varphi_+>0$ we easily conclude that $-E_0\leq{s(\rho_+)}$ and as a consequence $-E_0\notin{I_2}$. Then $J^-$ vanishes for a unique value $E_0:=\log(2)=\mathbb{S}(\bar\rho)$ for $\bar\rho=1/2$. Thus, the function $J^-$ is linear around $\log(2)$ and the fluctuations are not Gaussian.

Now we consider the case $\rho_+\leq{1/2}$, which corresponds to $\varphi_- \le \varphi_+<0$.
By the definition of ${\bar V}^-$, we have that ${\bar V}^- = \log (\chi(\rho_+))$.
By the previous  results, for $E+{\bar V}^-\in I_1=[s(\rho_+);s(\rho_-)]$, we have that $J^- (E)=E +  (E+{\bar V}^-)=2E+{\bar V}^-$. We conclude that for $E+{\bar V}^-\in I_1$ it holds that $J^- (E) \ge 0$ with equality if and only if $E_0=-\frac{1}{2}{\bar V}^-=\frac{1}{2}\log (\chi(\rho_+))$. But in this case a simple computation shows that $E_0\notin{I}_1$.
On the other hand for $E+{\bar V}^-\in I_2=[-\log(1+e^{-\varphi_+});s(\rho_+))$ we have that $J^- (E)=E - S_{\rho_+,\rho_+}(-(E+{\bar V}^-))$. As above we conclude that for $E+{\bar V}^-\in I_2$ it holds that  $J^- (E) \ge 0$ with equality if and only if
$E_0:=E_0(\rho_+)=-\rho_+\log(1-\rho_+)-(1-\rho_+)\log(\rho_+)$.
Repeating the same computations as above, one shows that $-E_0<s(\rho_+)$ so that $-E_0\in{I_2}$. In the remaining case, namely $E+{\bar V}^-\in I_3=(s(\rho_-); -\log(1+e^{\varphi_-})]$ we have $J^- (E)=E - S_{\rho_-,\rho_-}(-(E+{\bar V}^-))$ and
$J^-(E_0)=0$ for $E_0:=E_0(\rho_-)=-\rho_-\log(1-\rho_-)-(1-\rho_-)\log(\rho_-)$ but in this case $E_0\notin{I_3}$. Thus, $J^-$ vanishes for a unique value of $-E_0-\bar{V}^-:=-\rho_+\log(\rho_+)-(1-\rho_+)\log(1-\rho_+)=\mathbb{S}(\bar\rho)$ for $\bar\rho=\rho_+$.

The case $0\le \varphi_- < \varphi_+$ is analogous to the previous one and for that reason we omitted its proof.

\section{A local equilibrium statement}
\label{sec:ls}

In this section we give a derivation of the strong form of local equilibrium that we need in order to establish the variational formula (\ref{eq:1.9}). For any $\ve>0$, we split the set $\{-N,\ldots,N\}$ into $K=\ve^{-1}$ boxes of size $\ve N$ (we assume $\ve N$ to be an integer to simplify). To each configuration $\eta \in \Omega_N$, let $\bM(\eta)=(M_1(\eta), \ldots, M_K(\eta))$ with $M_i(\eta)$ begin the number of particles in the $i^{\rm{th}}$ box in the configuration $\eta$. For $\bM=(M_1, \ldots, M_K)$ fixed, we denote by $\Omega_N (\bM)$ the configurations $\eta$ such that for any $i \in \{1,\ldots,K\}$, the number of particles in the $i^{\rm{th}}$ box is $M_i$ and by $Z_N (\bM)$ its cardinal.

The strong form of local equilibrium is the following statement:
\begin{equation*}
\tag*{({\bf H})}
\lim_{\ve \to 0} \,\limsup_{N \to \infty}\, \sup_{\bM}
\,\sup_{\eta \in \Omega_N (\bM)} \, \Big| \log \big(Z_{N} (\bM) \, \mu_{ss,N} (\eta|\bM)\big)
\,\Big|\;=\; 0\; .
\end{equation*}

The stationary state $\mu_{ss,N}$ can be expressed in terms of a product of (infinite) matrices (\cite{D}). We consider the TASEP with $p=1$ but we do not assume in this section that $\rho_- < \rho_+$. Thus, the case $\rho_- < \rho_+$ corresponds to the competitive TASEP and the case $\rho_- > \rho_+$ to the cooperative TASEP (up to a trivial left-right symmetry). Moreover, to have notations consistent with \cite{D}  we consider the boundary driven TASEP on the lattice $\{1, \ldots, N\}$ rather than on $\{-N, \ldots,N\}$. Let $\Sigma_N = \{0,1\}^{\{1, \ldots,N\}}$. \\

By \cite{D}, there exist matrices $D$, $E$ and vectors $|V\rangle,
\langle W|$ such that $\langle W \, | \, V \rangle =1$,
\begin{equation*}
\begin{split}
& DE \;=\;  D\;+\; E \; , \\
&(1- \rho_+) \,  D \,|\, V \rangle \;=\; |\, V \rangle\;, \\
&\langle W \,|\, \rho_-\,  E \;=\; \langle W \, |
\end{split}
\end{equation*}
and
\begin{equation*}
\label{a01}
\mu_{ss,N} (\eta) \;=\; \cfrac{\omega_N (\eta)}
{\langle W | (D+E)^{N} |V \rangle}\;,
\end{equation*}
where the weight $\omega_N (\eta)$ is given by
\begin{equation*}
\omega_N (\eta) \;=\; \langle W | \prod_{x=1}^{N}
\left\{ \eta(x) D + [1-\eta(x)] E\right\} | V \rangle\;.
\end{equation*}

\begin{lemma}
\label{lem:45678}
For any $N \ge 2$, any  $\eta \in \Sigma_N$ such there exists a site $x\in \{1, \ldots,N-1 \}$ for which $\eta_x =1$, $ \eta_{x+1} =0$, we have that
\begin{equation*}
s_N (x,\eta)= \omega_N (\eta) - \omega_N (\sigma^{x,x+1} \eta)
\end{equation*}
has the same sign as $\rho_- -\rho_+$.
\end{lemma}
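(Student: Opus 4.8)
The plan is to exploit the matrix product representation directly. Fix $x$ with $\eta_x=1$, $\eta_{x+1}=0$; then $\sigma^{x,x+1}\eta$ differs from $\eta$ only in that positions $x$ and $x+1$ carry $E$ then $D$ instead of $D$ then $E$. Writing $\omega_N(\eta)=\langle W|\,A\,DE\,B\,|V\rangle$ and $\omega_N(\sigma^{x,x+1}\eta)=\langle W|\,A\,ED\,B\,|V\rangle$, where $A=\prod_{y<x}\{\eta_y D+(1-\eta_y)E\}$ and $B=\prod_{y>x+1}\{\eta_y D+(1-\eta_y)E\}$ are fixed (identical) products with nonnegative matrix entries, we get
\[
s_N(x,\eta)=\langle W|\,A\,(DE-ED)\,B\,|V\rangle.
\]
So the whole problem reduces to understanding the commutator $DE-ED$ and checking that, sandwiched between the nonnegative "environment" $\langle W|A$ on the left and $B|V\rangle$ on the right, it has a definite sign governed by $\rho_--\rho_+$.

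The key algebraic step is to compute $DE-ED$ using the DEHP relation $DE=D+E$. One does not get a one-line answer from $DE=D+E$ alone, because $ED$ is not determined by it; instead one should use the standard representation of $D$, $E$ as semi-infinite (e.g.\ bidiagonal) matrices together with the boundary vectors. In the canonical representation one has $D=\id+d$, $E=\id+e$ with $d,e$ the shift-type operators satisfying $de=\id$ (so that $DE=D+E$ holds), while $ed$ is the projection off the first basis vector; hence $DE-ED=de-ed=|0\rangle\langle 0|$ is the rank-one projection onto the first basis vector. Consequently
\[
s_N(x,\eta)=\big(\langle W|A\,|0\rangle\big)\,\big(\langle 0|\,B|V\rangle\big),
\]
a product of two scalars. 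The remaining task is to show that the sign of this product is that of $\rho_--\rho_+$. Since $A$ and $B$ have nonnegative entries and $\langle W|$, $|V\rangle$ are (for $\rho_-<1$, $\rho_+>0$) vectors with entries of a fixed sign determined by the boundary eigenvalue equations $\langle W|\rho_- E=\langle W|$ and $(1-\rho_+)D|V\rangle=|V\rangle$, one shows that $\langle W|A|0\rangle$ and $\langle 0|B|V\rangle$ are each nonzero with signs depending on $\rho_\pm$ in a way that combines to give $\mathrm{sign}(\rho_--\rho_+)$; the appearance of the factor $\rho_--\rho_+$ itself comes from comparing the normalizations forced by the two boundary relations (the geometric series $\sum(1-\rho_+)^{-k}$ versus $\sum\rho_-^{-k}$ converge to quantities whose difference carries that sign).

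I expect the main obstacle to be the bookkeeping in the last step: one must pin down the precise representation of $D,E,|V\rangle,\langle W|$ (there is some freedom) and verify positivity/sign of the partial matrix elements $\langle W|A|0\rangle$ and $\langle 0|B|V\rangle$ uniformly over all admissible environments $A,B$ — i.e.\ over all configurations of the sites away from $\{x,x+1\}$. A clean way to organize this is to argue by induction on $N$ (peeling off the leftmost or rightmost factor), reducing $s_N(x,\eta)$ to a positive combination of analogous quantities in smaller systems plus an explicit boundary term whose sign is manifestly that of $\rho_--\rho_+$; the case $N=2$ (where $\eta=(1,0)$ and $s_2=\langle W|(DE-ED)|V\rangle=\langle W|0\rangle\langle 0|V\rangle$) is the base case and is a direct computation. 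Once the sign of $DE-ED$ against the boundary-adapted positive cone is established, monotonicity of all the intermediate factors makes the general $N$ case follow.
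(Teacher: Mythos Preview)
Your reduction to $s_N(x,\eta)=\langle W|\,A\,(DE-ED)\,B\,|V\rangle$ is exactly right, and the idea of exploiting the rank-one commutator in the Fock representation is natural. The difficulty is that this particular representation only does half the job. With $D=\id+d$, $E=\id+e$, $de=\id$, the boundary relations force $w_n=w_0\big(\tfrac{1-\rho_-}{\rho_-}\big)^n$ and $v_n=v_0\big(\tfrac{\rho_+}{1-\rho_+}\big)^n$, so that
\[
\langle W|V\rangle \;=\; w_0 v_0 \sum_{n\ge 0}\Big(\tfrac{(1-\rho_-)\rho_+}{\rho_-(1-\rho_+)}\Big)^n,
\]
which converges (and then equals $1$ with $w_0v_0=\tfrac{\rho_--\rho_+}{\rho_-(1-\rho_+)}>0$) \emph{only} when $\rho_->\rho_+$. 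In that regime your positivity argument is clean: all entries of $A,B$ are nonnegative, all $w_n,v_n$ are positive, and $s_N=\langle W|A|0\rangle\langle 0|B|V\rangle>0$. But when $\rho_-<\rho_+$ the geometric ratio exceeds $1$, the pairing $\langle W|V\rangle$ and indeed every $\omega_N(\eta)$ diverges in this representation, and the factorization $s_N=\langle W|A|0\rangle\langle 0|B|V\rangle$ is no longer a statement about finite numbers. Your remark that ``the difference carries the sign of $\rho_--\rho_+$'' is the shadow of this divergence, not a proof; there is no choice of signs of $w_0,v_0$ that makes the Fock representation produce the correct finite positive weights when $\rho_-<\rho_+$.

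The paper sidesteps this entirely by doing what you mention only as a fallback: a purely algebraic induction on $N$ using nothing but $DE=D+E$, $\langle W|E=\rho_-^{-1}\langle W|$, $D|V\rangle=(1-\rho_+)^{-1}|V\rangle$, $\langle W|V\rangle=1$. The base case $N=2$ gives $\langle W|(DE-ED)|V\rangle=\rho_-^{-1}+(1-\rho_+)^{-1}-\rho_-^{-1}(1-\rho_+)^{-1}=(\rho_--\rho_+)/(\rho_-(1-\rho_+))$ directly from the relations, with no representation in sight. The inductive step then peels off a neighboring letter: if there is a $1$ just left (or a $0$ just right) of the marked $10$, the relation $DE=D+E$ collapses $s_N$ to a single $s_{N-1}$; if the local pattern is $\ldots 0\,10\,1\ldots$, one applies $DE=D+E$ at an adjacent $10$ pair to write $s_N$ as a \emph{sum} of two $s_{N-1}$'s; and the residual configurations $0\cdots0\,10\,1\cdots1$ are handled by pushing $E$'s to the left and $D$'s to the right against the boundary vectors, yielding a positive multiple of the base-case quantity. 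This is representation-free and covers both signs of $\rho_--\rho_+$ at once. Your sketch would become a complete proof if you dropped the Fock-space route and developed this induction in full.
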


\begin{proof}
Let  us define $s=\frac{1}{\rho_{-}} +\frac{1}{1-\rho_+} -\frac{1}{(1- \rho_+)\rho_-}$ which has the same sign as $\rho_- -\rho_+$. We prove the lemma by induction. A configuration of $\Sigma_N$ is identified with a sequence of $0$'s and $1$'s of length $N$. For example $011$ is the configuration $\eta \in \Sigma_3$ such that $\eta_{1} =0, \eta_2= 1, \eta_3 =1$.  For $N=2$, the induction hypothesis is trivial since
\begin{equation*}
\begin{split}
\omega_2 (10) -\omega_2 (01) = \langle W \, | \, DE-ED \, | \, V \rangle = s.
\end{split}
\end{equation*}
Assume that the induction hypothesis is valid for $N-1$. Consider a configuration $\eta \in \Sigma_N$ such that $\eta_x=1$, $\eta_{x+1} =0$, $x \in \{ 1, \ldots,N-1\}$. We write $\eta$ in the form $\eta=\alpha 10 \beta$ where the $1$ is at position $x$.
If $\alpha=\alpha' 1$ then by using the relation $DE= D+E$, we have
\begin{equation*}
\begin{split}
s_N (x,\eta)&= \omega_{N} (\alpha' 110 \beta) - \omega_N (\alpha' 101 \beta)\\
&= \omega_{N-1} (\alpha' 11 \beta) + \omega_{N-1} ( \alpha' 10 \beta) - \omega_{N-1} (\alpha' 11 \beta) - \omega_{N-1} ( \alpha' 01 \beta)\\
&= s_{N-1} (x-1, \eta')
\end{split}
\end{equation*}
where $\eta' = \alpha' 10 \beta$. Thus, by the induction hypothesis applied to $\eta'$, $s_N (x,\eta)$ has the same sign as $s$. If $\beta=0\beta'$, the same conclusion holds. Thus we can assume that $\eta$ is in the form $\alpha 0101\beta$. If $\beta= 0 \beta'$, by using the relation $DE=D+E$ applied at position $(x+2, x+3)$, we get
\begin{equation*}
\begin{split}
s_N (x, \eta)&= \omega_{N-1} (\alpha0101\beta') -\omega_{N-1} (\alpha0011\beta')+\omega_{N-1}(\alpha0100\beta') -\omega_{N-1} (\alpha0010\beta')\\
&= s_{N-1}(x,\eta^1) + s_{N-1} (x,\eta^0)
\end{split}
\end{equation*}
where the $\eta^1= \alpha0101\beta'$ and $\eta^0=\alpha0100\beta'$. By the induction hypothesis, this has the same sign as $s$. The same conclusion holds if $\alpha=\alpha' 1$. By iterating this procedure, one can prove that $s_N (x,\eta)$ has the same sign as $s$ if there is a $1$ to the left of $x-1$ or a $0$ to the right of $x+2$. The only remaining case is if $\eta$ is in the form $\eta=0\ldots 0\, 10\, 1 \ldots 1$ with $m$ zeroes to the left of the leftmost one and $n$ ones to the right of the rightmost zero. But in this case we have
\begin{equation*}
\begin{split}
s_N (x,\eta) = \langle W \, | \, E^m (DE -ED) D^n \, | \, V \rangle &= \frac{1}{\rho_-^m (1-\rho_+)^n} \langle W \, | \, (DE -ED) \, | \, V \rangle\\
& = \frac{s}{\rho_-^m (1-\rho_+)^n}
\end{split}
\end{equation*}
which has the same sign as $s$ and the lemma is proved.
\end{proof}

This lemma is sufficient to prove the local equilibrium statement as in \cite{BL}.

\appendix

\section{Proof of Theorem \ref{th:ent1}}
\label{sec:A1}
Recall the definition of $D$ from \eqref{eq:condm}. We can rewrite the set $D$ in a more convenient form by introducing
\begin{equation*}
a_m = \cfrac{E(m)-1}{1-\xi_0} \quad \textrm{and}\quad \quad b_m= \cfrac{m-E(m)}{\xi_0}.
\end{equation*}
It is clear that if $E(m) \ge \sup\{\xi_0, m-\xi_0\}$ then $a_m \ge -1$ and $b_m \le 1$. Also,  if $E(m) \le m+(1-m) \xi_0$ then $a_m \le b_m$. We have $a_m=-1$ (resp. $b_m=1$) if and only if $E(m)=\xi_0$ (resp. $m-E(m)=\xi_0$).

We have
\begin{equation*}
(y,m) \in D \Leftrightarrow
\begin{cases}
\vspace{1mm}
&0\leq m\leq2, \quad \sup\{\xi_0, m-\xi_0\} \le E(m) \le m- (m-1)\xi_0,\\
\vspace{1mm}
&a_m \le y \le b_m.
\end{cases}
\end{equation*}

It is easy to check that $D \ne \emptyset  $ is equivalent to $\{ m \in [0,2]\, ;\, \sup\{\xi_0, m-\xi_0\} \le E(m) \le m- (m-1)\xi_0  \}\ne \emptyset$ which is equivalent to $E\in [E_{+\infty}^-, E_{+\infty}^+]$.

Assume from now on that $E\in [E_{+\infty}^-, E_{+\infty}^+]$.

 We denote by $\alpha:=\alpha(E)$ (resp. $\beta:=\beta(E)$, resp. $\gamma:=\gamma (E)$) the solution to the linear equation $E(m)=\xi_0$ (resp. $m-E(m)=\xi_0$, resp. $E(m)-m+(m-1)\xi_0=0$). We have that $\{ m \in [0,2]\, ;\, \sup\{\xi_0, m-\xi_0\} \le E(m) \le m- (m-1)\xi_0  \}$ is given by $m\in [0,2]$ such that
\begin{equation*}
\begin{cases}
\vspace{1mm}
E(\alpha) \le E(m)\\
\vspace{1mm}
m-E(m) \le \beta -E(\beta)\\
\vspace{1mm}
E(m)-m+(m-1)\xi_0\le E(\gamma)-\gamma+(\gamma-1)\xi_0.
\end{cases}
\end{equation*}
Since $E(m)$ is a linear function of $m$, $\{ m \in [0,2]\, ;\, \sup\{\xi_0, m-\xi_0\} \le E(m) \le m- (m-1)\xi_0  \}$ is a closed interval $[m_-, m_+]$ (with $m_-:=m_-(E)$ and $m_+:=m_+(E)$) of $[0,2]$ and it is easy to show by inspection of the different cases that we have:

\begin{table}[htbm]
\renewcommand{\arraystretch}{1.8}
\label{eq:valuesalphabetagamma}
\begin{center}
\begin{tabular}{|c|c|c|c|}
\hline
$\frac{1}{2} \le \rho_-<\rho_+ $& $\rho_- < \rho_+ \le \frac{1}{2}$ & $\frac{1}{2}\le 1-\rho_-\le  \rho_+$& $\frac{1}{2}\le \rho_+\le 1-\rho_-$\\
\hline
 $m_- =\sup\{ \alpha,\beta\}$&$m_- =\gamma$&$ m_-=\alpha$&$ m_-=\gamma$\\
\hline
$m_+ =\gamma$&$m_+ =\inf\{ \alpha,\beta\}$&$m_+=\gamma$&$m_+=\beta$ \\
\hline
$\inf \{\alpha,\beta\} \notin (m_-,m_+)$& $\sup\{\alpha,\beta\} \notin (m_-,m_+)$ & $\beta \notin (m_-,m_+)$ & $\alpha \notin (m_-,m_+)$\\
\hline
\end{tabular}
\end{center}
\vspace{0.2cm}
\caption{Values of $m_-$ and $m_+$ in terms of $\alpha,\beta$ and $\gamma$.}
\end{table}

This shows that $\gamma$ is always equal to $m_{-}$ or $m_+$, that $\alpha$ or $\beta$ is the other boundary of the interval $[m_-, m_+]$ and that the remaining point among $\{\alpha,\beta,\gamma\}$ does not belong to $(m_-,m_+)$.

Let $f: [m_-, m_+]\to \mathbb{R}$ be defined  by
\begin{equation}
\label{eq:defoff}
f(m)=\sup_{y \in [a_m, b_m]} F(y,m)
\end{equation}
so that
\begin{equation}
\label{eq:sfg0}
\begin{split}
{S}^+ (E) &=\frac{1}{2} \, \sup_{m \in [m_-, m_+] } f(m).
\end{split}
\end{equation}

Observe that $\alpha \ne \beta\ne \gamma$ apart from a finite (at most three) number of explicit values of $E$. If $E$ is different from these values we say that $E$ is a regular value of the energy. For simplicity we restrict the study to the case where $E$ is a regular value but the same analysis could be performed for the non-regular values of $E$. Since $\alpha=\beta$ is equivalent to $\alpha=\beta=2\xi_0$ we will always assume that it is not the case.

%
%

\begin{lemma}\label{lemma use}
Let $E \in [E_{+\infty}^-\,;\, E_{+\infty}^+]$ be a regular value of the energy.\\
For any $m\in (m_-,m_+)$, we have that $f(m)=F(y(m),m)$ for a unique $y(m) \in (a_m,b_m)$ which is solution to the equation $\partial_{y} F(y(m),m)=0$.\\
If $\alpha$ (resp. $\beta$) belongs to $\{m_-, m_+\}$  then $f(\alpha)= -2 s(\alpha/2)$ (resp. $f(\beta)= -2 s (\beta /2)$) and the supremum appearing in the definition of $f$ is uniquely realized for $y=-1$ (resp. $y=1$). \\
We have $f(\gamma)=0$ and the supremum in the definition of $f$ is uniquely realized for $y=\gamma-1$.
\end{lemma}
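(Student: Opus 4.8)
The plan is to analyze the one-dimensional optimization problem $f(m)=\sup_{y\in[a_m,b_m]}F(y,m)$ by studying the sign of $\partial_y F(y,m)$ on the closed interval $[a_m,b_m]$, together with a careful look at the boundary cases $y=a_m$ and $y=b_m$, which can only occur when $a_m=-1$ or $b_m=1$, i.e.\ when $E(m)=\xi_0$ (that is $m=\alpha$) or $m-E(m)=\xi_0$ (that is $m=\beta$). First I would compute $\partial_y F(y,m)$ explicitly. Writing $u(y)=\frac{y\xi_0+E(m)}{y+1}$ and $v(y)=\frac{m-(y\xi_0+E(m))}{1-y}$, so that $F(y,m)=-(y+1)s(u(y))-(1-y)s(v(y))$, and using the identity $\frac{d}{dy}\big[(y+1)s(u(y))\big]=s(u)-s'(u)(u-\xi_0\cdot\text{(something)})$ — more precisely, since $(y+1)u(y)=y\xi_0+E(m)$ one gets $(y+1)u'(y)=\xi_0-u(y)$, hence $\frac{d}{dy}[(y+1)s(u)]=s(u)+s'(u)(\xi_0-u)$, and similarly $\frac{d}{dy}[(1-y)s(v)]=-s(v)-s'(v)(\xi_0-v)$ — one finds
\[
\partial_y F(y,m)= -s(u)-s'(u)(\xi_0-u)+s(v)+s'(v)(\xi_0-v).
\]
Because $s'(r)=\log\frac{r}{1-r}$, the function $r\mapsto s(r)+s'(r)(\xi_0-r)=\log(1-r)+\xi_0\log\frac{r}{1-r}$ is strictly decreasing on $(0,1)$ for $\xi_0\in(0,1)$ (its derivative is $\frac{\xi_0-r}{r(1-r)}\cdot$... — in fact a short computation shows it is monotone), so $\partial_y F(y,m)$ vanishes iff $u(y)=v(y)$, and it changes sign exactly once. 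This gives strict concavity of $y\mapsto F(y,m)$ in the relevant direction and shows that the supremum over $y\in[a_m,b_m]$ is attained at a unique point.

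Next I would translate the location of that unique maximizer into the three regimes of the lemma. When $m\in(m_-,m_+)$ the constraints defining $D$ are slack, i.e.\ $a_m<-1$ is impossible but we have $a_m>-1$ and $b_m<1$ strictly (this is exactly the content of Table~\ref{eq:valuesalphabetagamma}: for a regular value, at most one of $\alpha,\beta,\gamma$ lies at an endpoint of $[m_-,m_+]$, and none lies strictly inside). Hence the interval $[a_m,b_m]$ has $-1<a_m$ and $b_m<1$, the endpoints $y=\pm1$ are not in play, and the unique critical point $y(m)$ solving $\partial_yF(y(m),m)=0$ must lie in the open interval $(a_m,b_m)$: indeed at $y=a_m$ one has $u(a_m)=1$ hence $s(u)=0$, $s'(u)=+\infty$, which forces $\partial_yF\to+\infty$ (so $F$ is increasing there), and symmetrically $\partial_yF\to-\infty$ as $y\to b_m$ where $v(b_m)=1$; by the intermediate value theorem and the uniqueness of the zero, $y(m)\in(a_m,b_m)$ and $f(m)=F(y(m),m)$. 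For $m=\alpha\in\{m_-,m_+\}$ we have $a_m=-1$ exactly, so the left endpoint $y=-1$ is admissible; there $u(-1)$ is indeterminate but the $(y+1)s(u)$ term is, by convention, $0$, and $F(-1,\alpha)=-(1-(-1))s\!\big(\tfrac{\alpha-(-\xi_0+E(\alpha))}{2}\big)=-2s(\alpha/2)$ using $E(\alpha)=\xi_0$. One then checks $\partial_yF<0$ for all $y\in(-1,b_\alpha]$ (the critical point $u=v$ cannot be reached since $v<u$ on this range, because at $y=-1^+$, $v\to$ a finite value while $u\to$ something larger — this is where Table~\ref{eq:valuesalphabetagamma} is used to rule out the critical point lying in $(-1,b_\alpha)$), so the supremum is uniquely at $y=-1$. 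The case $m=\beta$ is symmetric, giving $f(\beta)=-2s(\beta/2)$ attained uniquely at $y=1$.

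Finally, for $m=\gamma$, by definition $E(\gamma)-\gamma+(\gamma-1)\xi_0=0$, which is precisely the condition $a_\gamma=b_\gamma$; the feasible interval for $y$ collapses to the single point $y=\gamma-1$ (one checks $a_\gamma=b_\gamma=\gamma-1$ directly from the formulas for $a_m,b_m$). At that point $u(\gamma-1)=v(\gamma-1)=1$, so both $s$-terms vanish and $F(\gamma-1,\gamma)=0$; the supremum is trivially and uniquely realized there. The main obstacle I anticipate is the monotonicity claim for $r\mapsto \log(1-r)+\xi_0\log\frac{r}{1-r}$ and, relatedly, verifying in the endpoint cases $m=\alpha,\beta$ that the unique critical point $u=v$ does \emph{not} fall inside the admissible $y$-range — this is exactly where one must invoke the last line of each column of Table~\ref{eq:valuesalphabetagamma} (the statement that the "third" point among $\{\alpha,\beta,\gamma\}$ is not in $(m_-,m_+)$) to control the geometry; everything else is a routine sign analysis once $\partial_yF$ is in hand.
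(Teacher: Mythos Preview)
Your overall strategy is close to the paper's, and several pieces are correct (the reduction to studying $\partial_y F$, the treatment of $m=\gamma$, and the endpoint blow-up giving existence of an interior critical point for $m\in(m_-,m_+)$). But there is a genuine gap at the heart of your uniqueness argument.

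You claim that
\[
g(r)\;=\;s(r)+s'(r)(\xi_0-r)\;=\;(1-\xi_0)\log(1-r)+\xi_0\log r
\]
is strictly monotone on $(0,1)$, and deduce from $\partial_y F(y,m)=-g(u)+g(v)$ that $\partial_y F=0$ iff $u=v$, with exactly one sign change. This is false: $g'(r)=\dfrac{\xi_0-r}{r(1-r)}$, so $g$ is increasing on $(0,\xi_0)$, decreasing on $(\xi_0,1)$, with a strict maximum at $r=\xi_0$. Hence $g(u)=g(v)$ does \emph{not} force $u=v$, and your ``changes sign exactly once'' conclusion does not follow from monotonicity of $g$. (Note also that both $u$ and $v$ are non-increasing in $y$ on $[a_m,b_m]$, since $E(m)\ge\xi_0$ and $E(m)\ge m-\xi_0$; so even a monotone $g$ would not immediately give a single sign change.) A small related slip: at $y=b_m$ one has $v(b_m)=0$, not $v(b_m)=1$; your conclusion $\partial_y F\to-\infty$ is still correct because $g(v)\to-\infty$ as $v\to0$.

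The paper bypasses this by computing the second derivative directly and showing strict concavity:
\[
\partial_y^2 F(y,m)
=-\frac{(\xi_0-E(m))^2}{(y+1)^3}\,s''(u)\;-\;\frac{(m-E(m)-\xi_0)^2}{(1-y)^3}\,s''(v)\;<\;0,
\]
which, combined with your (correct) endpoint behaviour $\partial_y F\to+\infty$ at $a_m$ and $\partial_y F\to-\infty$ at $b_m$, gives a unique interior maximizer $y(m)$. For the endpoint case $m=\alpha$, the paper does not argue via ``$u=v$ cannot be reached'': it uses that at $m=\alpha$ one has $u(y)\equiv\xi_0$, and then computes $\lim_{y\to-1}\partial_y F(y,\alpha)=(-s)(\xi_0)-(-s)(\alpha/2)-(\xi_0-\alpha/2)(-s)'(\alpha/2)<0$ by strict convexity of $s$ together with $\alpha\neq 2\xi_0$ (regularity). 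Combined with strict concavity this forces the maximum to sit at $y=-1$. Your framework actually recovers this cleanly once you drop the false monotonicity: since $u\equiv\xi_0$ is the unique maximizer of $g$, one has $\partial_y F=-g(\xi_0)+g(v)<0$ for every $y\in(-1,b_\alpha]$ unless $v=\xi_0$, which would force $\alpha=2\xi_0$, excluded by regularity. No appeal to Table~1 is needed here.
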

\begin{proof}

We notice that if $m=\gamma$ then $a_m=b_m=m-1$ and $f(\gamma)=0$. This shows the last sentence of the lemma.

Now, let $m\in (m_-,m_+)$.
Then $a_m < b_m$. For any $y \in (a_m,b_m)$, we have
\begin{equation*}
\begin{split}
\partial_{y} F (y,m) &= -\cfrac{\xi_0 -E(m)}{y+1} s^{\prime} \left( \cfrac{y\,  \xi_0 + E(m)}{y +1}\right) - s  \left( \cfrac{y\,  \xi_0 + E(m)}{y +1}\right)\\
&-  \cfrac{m-E(m)-\xi_0}{1-y}s^{\prime} \left( \cfrac{m- (y \, \xi_0 + E(m))}{1-y}\right) +  s\left( \cfrac{m- (y \, \xi_0 + E(m))}{1-y}\right),\\
\partial_{y}^2 F (y,m) &=- \cfrac{(\xi_0 -E(m))^2}{(y+1)^3} s^{\prime\prime} \left( \cfrac{y\,  \xi_0 + E(m)}{y +1}\right) \\
&- \cfrac{(m-E(m)-\xi_0)^2}{(1-y)^3}s^{\prime\prime} \left( \cfrac{m- (y \, \xi_0 + E(m))}{1-y}\right).
\end{split}
\end{equation*}
Since $s$ is a strictly convex function, $\partial_{y}^2 F (y,m)<0$, i.e. $F (\cdot,m)$ is strictly concave on $(a_m,b_m)$ so that $\sup_{[a_m,b_m]} F (\cdot,m)$ is attained for a unique point $y(m)$ of $[a_m,b_m]$. If $m \ne \alpha$ then $a_m \ne -1$ and if $m \ne \beta$ then $b_m \ne 1$.  Noticing  that $(y\xi_0+E(m))/(y+1)$ (resp. $(m-(y\xi_0+E(m)))/(1-y)$) goes to $1$ (resp. $0$) as $y$ goes to $a_m$ (resp. $b_m$), we conclude that $\partial_{y} F (y,m)$ goes to $+\infty$ (resp. $-\infty$). This implies that $y(m) \in (a_m,b_m)$ and $(\partial_y F)(y(m),m)=0$. If $m=\alpha \in [m_-,m_+]$, then
$$F(y,\alpha)= -(y+1)s (\xi_0) -(1-y) s\left( \cfrac{\alpha- \xi_0 (y + 1)}{1-y}\right)$$
for $y\in [-1,b_\alpha] \subset [-1,1)$, $F(\cdot,\alpha)$ is strictly concave on$[-1,b_\alpha]$ and
\begin{equation*}
\begin{split}
\lim_{ y \to -1} \partial_y F(y,\alpha) = (-s)(\xi_0) - (-s) (\alpha/2) - (\xi_0 -\alpha/2) (-s)' (\alpha/2) < 0
\end{split}
\end{equation*}
because $s$ is strictly convex and $\alpha \ne 2\xi_0$ (otherwise $E$ is not regular). The maximum of $F(\cdot, \alpha)$ is uniquely attained for $y(\alpha):=-1$ and $F(y(\alpha),\alpha)= -2 s(\alpha/2)$.

Similarly, if $m=\beta \in [m_-,m_+]$, the supremum is uniquely attained for $y(\beta)=1$ and $F(y(\beta),\beta)=-2s(\beta/2)$.

\end{proof}

\begin{lemma}
\label{lem:C1func}
Let $E \in [E_{+\infty}^-\,;\, E_{+\infty}^+]$ be a regular value of the energy. The function $f:[m_-,m_+] \to [0,2\log(2)]$ is a continuously differentiable function on $(m_-,m_+)$ and, when $\alpha,\beta \in [m_-,m_+]$,
\begin{equation*}
f'(\alpha)= - s'( \alpha /2)\quad and \quad \quad f'(\beta)= - s'( \beta /2).
\end{equation*}
\end{lemma}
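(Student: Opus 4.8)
The plan is to combine the implicit function theorem with the envelope theorem on the open interval, and then to obtain the values at the endpoints $\alpha$, $\beta$ by passing to the limit in the resulting formula for $f'$.

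\emph{Interior.} By Lemma \ref{lemma use}, for each $m\in(m_-,m_+)$ the supremum defining $f(m)$ is attained at a unique point $y(m)\in(a_m,b_m)$, characterized by $\partial_y F(y(m),m)=0$, and the proof of that lemma shows $\partial_y^2 F(y(m),m)<0$. Since $F$ is smooth on $\mathring D$ and $E(m)$ is affine in $m$, the implicit function theorem applied to $\partial_y F=0$ shows that $m\mapsto y(m)$ is $C^1$ on $(m_-,m_+)$; hence $f(m)=F(y(m),m)$ is $C^1$ there, and differentiating and using $\partial_y F(y(m),m)=0$,
$$ f'(m)=\partial_m F(y(m),m)=-\,s'(u(m))\,E'(m)-s'(v(m))\,(1-E'(m)), $$
where $u(m)=\frac{y(m)\xi_0+E(m)}{y(m)+1}$, $v(m)=\frac{m-y(m)\xi_0-E(m)}{1-y(m)}$ are the two slopes of the optimal piecewise-linear profile and $E'(m)=\varphi_+/(\varphi_+-\varphi_-)$. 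The right-hand side is continuous in $m$.

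\emph{Endpoints.} Assume $\alpha\in[m_-,m_+]$; by the case analysis of $m_-,m_+$, $\alpha$ is then an endpoint of the interval and $E(m)\ge\xi_0$ throughout, with $E(\alpha)=\xi_0$ and $a_\alpha=-1$. I would first show $y(m)\to-1$ as $m\to\alpha$ inside $(m_-,m_+)$: if $y(m_k)\to y_\infty\in(-1,1]$ along a subsequence, then by continuity of $f$ and $F$ one gets $f(\alpha)=\lim_k F(y(m_k),m_k)=F(y_\infty,\alpha)$, contradicting the fact (Lemma \ref{lemma use}) that $F(\cdot,\alpha)$ attains its maximum on $[-1,b_\alpha]$ only at $y=-1$. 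I then compute $\lim_{m\to\alpha}f'(m)$ from the envelope formula: here $v(m)\to\alpha/2$, and $u(m)$ is controlled via the identity $u(m)=\xi_0+(1-\xi_0)\frac{a_m+1}{y(m)+1}$ (immediate from $a_m=\frac{E(m)-1}{1-\xi_0}$) together with the optimality relation $\partial_y F(y(m),m)=0$, which simplifies to $s(u(m))-s'(u(m))(u(m)-\xi_0)=s(v(m))-s'(v(m))(v(m)-\xi_0)$; these pin down $\lim_{m\to\alpha}u(m)$, and the limit of the envelope expression then equals $-s'(\alpha/2)$, which by continuity of $f$ is $f'(\alpha)$. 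The computation at $\beta$ is the mirror image ($y(m)\to+1$, $b_\beta=1$), giving $f'(\beta)=-s'(\beta/2)$.

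\emph{Main difficulty.} The interior step is routine once Lemma \ref{lemma use} is available. The delicate point is the endpoint analysis: one must control the rate at which $y(m)\to-1$ relative to $a_m\to-1$ in order to identify $\lim_{m\to\alpha}u(m)$, and then check that the resulting value of $f'$ collapses to $-s'(\alpha/2)$. This has to be done carefully, because $\partial_m F$ is discontinuous across $y=\pm1$ (the $0\cdot\log 0$ terms in $F$ are set to $0$ there by convention), and because one uses that $E$ is a regular value, i.e. $\alpha\neq 2\xi_0$, which keeps $g(\theta):=s(\theta)-s'(\theta)(\theta-\xi_0)$ strictly monotone on each side of $\xi_0$ near the values of interest.
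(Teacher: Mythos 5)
Your interior argument (implicit function theorem at the nondegenerate critical point $y(m)$, plus the envelope identity $f'(m)=\partial_m F(y(m),m)$) is correct and is exactly the paper's. Your route to $y(m)\to-1$ at the endpoint, via continuity of the constrained supremum and uniqueness of the maximizer of $F(\cdot,\alpha)$, is a legitimate variant of the paper's argument (which instead passes to the limit in the first-order condition), provided you justify the continuity of $f$ at $m=\alpha$ (Berge's maximum theorem, say).

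The genuine gap is the step ``these pin down $\lim_{m\to\alpha}u(m)$'': it cannot be closed as you describe, and you have correctly located the danger without defusing it. On the whole domain one has $E(m)\ge\xi_0$ and $m-E(m)\le\xi_0$, hence $u(m)=\xi_0+\frac{E(m)-\xi_0}{y(m)+1}\ge\xi_0$ and $v(m)\le\xi_0$ for every $m$; moreover $\alpha\le 2\xi_0$, with strict inequality for a regular value, so $\alpha/2<\xi_0$. Passing to the limit in the optimality relation $g(u(m))=g(v(m))$ therefore yields $g(u_\infty)=g(\alpha/2)$ with $u_\infty\ge\xi_0>\alpha/2$. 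Since $g'(\theta)=-s''(\theta)(\theta-\xi_0)$, the function $g$ is increasing on $(0,\xi_0)$ and decreasing on $(\xi_0,1)$: it is injective on each side of $\xi_0$ but not across $\xi_0$, and the equation selects the conjugate point $u^{\ast}\in(\xi_0,1)$ with $g(u^{\ast})=g(\alpha/2)$, which is never equal to $\alpha/2$. The limit of your envelope expression is then $-\tfrac{\varphi_+}{\varphi_+-\varphi_-}\,s'(u^{\ast})+\tfrac{\varphi_-}{\varphi_+-\varphi_-}\,s'(\alpha/2)$, not $-s'(\alpha/2)$. (Sanity checks: for $\varphi_-=-\varphi_+$ one has $\xi_0=1/2$, the first-order condition reads $v=1-u$, and $f$ turns out to be constant on $(m_-,m_+)$, so $f'(\alpha)=0\ne -s'(\alpha/2)$; in an asymmetric case such as $\varphi_-=-1$, $\varphi_+=2$, $E=0.5$, a numerical difference quotient gives $f'(\alpha)\approx-1.24$ against $-s'(\alpha/2)\approx+1.48$, but matches the conjugate-point formula.) To be fair, the paper's own proof hides the same problem in the assertion that $z\mapsto(z-\xi_0)s'(z)-s(z)$ is monotone, whereas its derivative $(z-\xi_0)s''(z)$ changes sign at $\xi_0$. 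Note that nothing downstream uses the endpoint values of $f'$: the computation of $S^+$ relies only on $f(\alpha)$, $f(\beta)$, $f(\gamma)$, continuity of $f$ up to the endpoints, strict concavity and the absence of interior critical points. So you should either prove the corrected formula for $f'(\alpha)$, $f'(\beta)$, or observe that the claimed values are not actually needed.
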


\begin{proof}
It is clear that $f$ is smooth on $(m_-,m_+)$ and that the implicit function theorem applies. Thus the continuity and differentiability problems are only around the points $m_-$ and $m_+$.

Let us prove that $f(m)$ goes to $f(\alpha)$ and that $f' (m)$ has a limit as $m$ goes to $\alpha$ (assuming that $\alpha\in\{m_-,m_+\}$) equal to $-s' (\alpha/2)$. The other case can be treated similarly. We will show that
\begin{equation}
\label{eq:limyuv}
\lim_{m \to \alpha} y(m)=-1,\quad \lim_{m \to \alpha} \cfrac{y(m)\,  \xi_0 + E(m)}{y(m) +1}=\lim_{m \to +\infty}  \frac{m- (y(m) \, \xi_0 + E(m))}{1-y(m)} = \alpha/2.
\end{equation}
By the implicit function theorem, for any $m \ne \alpha, \beta$ we have
\begin{equation*}
\begin{split}
f' (m) &= \partial_m F (y(m),m) \\
&= - \frac{\varphi_+}{\varphi_+ - \varphi_-} s' \left( \cfrac{y(m)\,  \xi_0 + E(m)}{y(m) +1} \right) + \frac{\varphi_-}{\varphi_+ - \varphi_-} s' \left(\cfrac{m- (y(m) \, \xi_0 + E(m))}{1-y(m)} \right).
\end{split}
\end{equation*}
From (\ref{eq:limyuv}) we deduce that $\lim_{m \to \alpha} f(m)= f(\alpha)$ and $\lim_{m \to \alpha} f' (m) =-s' (\alpha/2)$.

Let $(m_n)_{n \ge 0}$ be a sequence in $(m_-,m_+)$ ($m_n \ne \alpha,\beta,\gamma$ for any $n$) converging to $\alpha$. Since for any $m$,
$$y(m) \in [-1,1], \quad \cfrac{y(m)\,  \xi_0 + E(m)}{y(m) +1} \in [0,1] ,\quad \cfrac{m- (y(m) \, \xi_0 + E(m))}{1-y(m)} \in [0,1] ,$$
up to a subsequence we can assume that $y_n:=y(m_n)$ converges to some $a\in [-1,1]$ and that $\frac{y_n\,  \xi_0 + E(m_n)}{y_n +1}$ converges to $u\in [0,1]$ and  $\frac{m_n- (y_n \, \xi_0 + E(m_n))}{1-y_n}$ converges to $v\in [0,1]$.

If $a \in (-1,1)$, then $u= \xi_0 \in (0,1)$, $v= \xi_0 + \frac{\alpha -2 \xi_0}{1-a}$ and $v\ne u$ since $\alpha \ne 2\xi_0$. By continuity of the functions involved and taking into account that $\partial_{y} F(y_n, m_n)=0$, we get that
\begin{equation*}
-s (u) -\frac{\alpha -2\xi_0}{1-a} s' (v) +s(v) =0.
\end{equation*}
The term on the left hand side of the previous equality can be written as $s(v) -s(u) -(v-u) s'(v)$ which is negative, by the convexity of the function $s$ (if $v=0$ or $v=1$ then $s'(v) =- \infty$ or $s'(v)=+\infty$ and the inequality is still valid).
Therefore $a \in \{-1,1\}$.

If $a=1$, then since $\frac{m_n- (y_n \, \xi_0 + E(m_n))}{1-y_n}$ converges to $v\in [0,1]$ and $y_n \to a=1$, $m_n \to \alpha$ it implies that $\alpha=2\xi_0$ that is in contradiction with our assumptions.

It follows that $a= -1$, so that $v=\alpha/2$. Observe that $\frac{E(m_n) -\xi_0}{y_n +1}$ converges to $u -\xi_0$. Using the fact that $\partial_{y} F(y_n, m_n)=0$, we get
\begin{equation*}
\begin{split}
0&=(u-\xi_0) s' (u) -s(u) -\frac{\alpha -2\xi_0}{2} s' (\alpha /2) + s(\alpha/2).
\end{split}
\end{equation*}
Since $s$ is convex, the function $z \to (z-\xi_0)s'(z) -s(z)$ is monotone, so that the equality is uniquely satisfied for $u=\alpha/2$. This proves (\ref{eq:limyuv}).
\end{proof}

\begin{lemma}
Let $E \in [E_{+\infty}^-\,;\, E_{+\infty}^+]$ be a regular value of the energy. The function $f$ is strictly concave on $(m_-,m_+)$.
\end{lemma}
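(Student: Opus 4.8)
The plan is to show that $f$ is real-analytic and concave on $(m_-,m_+)$, so that it is automatically either affine on this interval or strictly concave on it, and then to exclude the affine alternative using the boundary values of $f$ and $f'$ furnished by Lemmas~\ref{lemma use} and~\ref{lem:C1func}.

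First I would establish concavity of $f$. Setting $p=p(y,m):=y\xi_0+E(m)$ and $q=q(y,m):=m-p$, one checks directly that
\begin{equation*}
-F(y,m)=h\bigl(p,\,y+1\bigr)+h\bigl(q,\,1-y\bigr),\qquad h(a,b):=a\log a+(b-a)\log(b-a)-b\log b=b\, s(a/b),
\end{equation*}
so that $-F$ is a sum of two perspectives of the strictly convex function $s$ precomposed with the affine maps $(y,m)\mapsto(p,y+1)$ and $(y,m)\mapsto(q,1-y)$; hence $-F$ is convex on the convex set $D$, i.e. $F$ is concave on $D$, and therefore $f(m)=\sup_{y\in[a_m,b_m]}F(y,m)$ is concave on $(m_-,m_+)$ (partial maximization of a jointly concave function over the slices of a convex set). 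Next I would note that $f$ is real-analytic on $(m_-,m_+)$: the arguments $u=p/(y+1)$ and $v=q/(1-y)$ stay in $(0,1)$ on $\mathring D$ and $s$ is real-analytic there, so $F$ is real-analytic on $\mathring D$; since $\partial_y^2F(y(m),m)<0$ for $m\in(m_-,m_+)$ by the proof of Lemma~\ref{lemma use}, the analytic implicit function theorem applied to $\partial_yF(y(m),m)=0$ gives that $m\mapsto y(m)$, hence $f$, is real-analytic on $(m_-,m_+)$. Consequently $f''$ is real-analytic and $\le 0$ there, so either $f''\equiv0$ and $f$ is affine, or $\{f''=0\}$ is discrete and $f$ is strictly concave.

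It then remains to exclude the affine case. Recall from the discussion preceding Lemma~\ref{lemma use} that one endpoint of $[m_-,m_+]$ is $\gamma$ and the other is $\alpha$ or $\beta$; assume it is $\alpha$ (the case $\beta$ is identical). If $f$ were affine on $(m_-,m_+)$, then, using $f(\gamma)=0$ and $f(\alpha)=-2s(\alpha/2)$ from Lemma~\ref{lemma use} together with the fact that the endpoint derivative of $f$ at $\alpha$ equals $-s'(\alpha/2)$ by Lemma~\ref{lem:C1func} (which is then the constant slope of $f$), evaluating the affine function at $\gamma$ gives
\begin{equation*}
0=f(\gamma)=-2s(\alpha/2)-s'(\alpha/2)\,(\gamma-\alpha),\qquad\text{i.e.}\qquad s(\alpha/2)+s'(\alpha/2)\,\tfrac{\gamma-\alpha}{2}=0 .
\end{equation*}
But the left-hand side is the value at $\gamma/2$ of the tangent line to the strictly convex function $s$ at the point $\alpha/2$, and since $\gamma\neq\alpha$ this value is strictly smaller than $s(\gamma/2)$; hence $s(\gamma/2)>0$, contradicting $s\le 0$ on $[0,1]$. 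Thus $f$ is not affine, and therefore $f$ is strictly concave on $(m_-,m_+)$.

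The step I expect to be the main obstacle is getting concavity of $f$ cheaply and, above all, ruling out a flat piece: a more computational route is to differentiate the envelope identity $f'(m)=\partial_mF(y(m),m)$ once more to obtain $f''(m)=\det\mathrm{Hess}\,F(y(m),m)/\partial_y^2F(y(m),m)$, and to write, via the perspective representation above, $\det\mathrm{Hess}\,F$ as a nonnegative multiple of $\bigl(\xi_0(\varphi_+-\varphi_-)+\varphi_-u(m)-\varphi_+v(m)\bigr)^2$; using the critical identity $\partial_yF=G(u)-G(v)$ with $G(t)=-\xi_0\log t-(1-\xi_0)\log(1-t)$ strictly convex (which forces $(u(m)-\xi_0)(v(m)-\xi_0)\le0$), this gives $f''<0$ immediately when $0\notin[\varphi_-,\varphi_+]$, but in the maximal-current regime $\varphi_-<0<\varphi_+$ — and in particular on the first-order line $\rho_-+\rho_+=1$, where the relevant branch of $\{G(u)=G(v)\}$ coincides with the zero locus of $\det\mathrm{Hess}\,F$ — one must still fall back on the analyticity argument above to preclude $f''$ vanishing on a whole subinterval.
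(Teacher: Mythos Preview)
Your proof is correct and takes a genuinely different route from the paper's. The paper proceeds computationally: it differentiates the envelope identity to get $f''(m)=\det\mathrm{Hess}\,F(y(m),m)/\partial_y^2 F(y(m),m)$, computes the Hessian determinant explicitly as
\[
\frac{s''(a)s''(b)}{(\varphi_+-\varphi_-)^2(y^2-1)^2}\Bigl[\varphi_+\sqrt{\tfrac{1+y}{1-y}}\,(m-E(m)-\xi_0)+\varphi_-\sqrt{\tfrac{1-y}{1+y}}\,(\xi_0-E(m))\Bigr]^2,
\]
and asserts (``by convexity of $s$'') that this is strictly positive for $m\ne\alpha,\beta$, whence $f''<0$ on $(m_-,m_+)$ since $\partial_y^2F<0$. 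You instead get concavity of $F$ for free via the perspective representation $-F=h(p,y+1)+h(q,1-y)$, pass to concavity of $f$ by partial maximization over the convex set $D$, upgrade to real-analyticity of $f$ through the analytic implicit function theorem (legitimate since $u(m),v(m)\in(0,1)$ and $\partial_y^2F<0$ at the critical point), and then kill the affine alternative with a clean tangent-line contradiction using the boundary data $f(\gamma)=0$, $f(\alpha)=-2s(\alpha/2)$, $f'(\alpha)=-s'(\alpha/2)$.

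Your remark in the final paragraph is on target: when $\varphi_-,\varphi_+$ share a sign, the two summands in the paper's bracket have the same sign (since $m-E(m)-\xi_0<0$ and $\xi_0-E(m)<0$ for $m\ne\alpha,\beta$), so the bracket is visibly nonzero and the paper's argument is complete. But when $\varphi_-<0<\varphi_+$ the two summands have opposite signs and the bracket \emph{does} vanish for some $y\in(-1,1)$ at each such $m$; the paper's sentence ``by convexity of $s$'' does not by itself exclude that this zero sits at $y=y(m)$. In your variables, the zero locus is the line $v-\xi_0=(\varphi_-/\varphi_+)(u-\xi_0)$ through $(\xi_0,\xi_0)$ with negative slope, which can meet the non-diagonal branch of $\{G(u)=G(v)\}$. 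Your analyticity-plus-boundary argument is exactly what closes this: even if $f''$ had isolated zeros, the identity theorem forces either $f''\equiv 0$ (excluded by your Step~5) or strict concavity.

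The trade-off: where it applies cleanly (same-sign $\varphi_\pm$), the paper's computation gives the pointwise $f''<0$ in one stroke; your route uses softer structure and the two preceding lemmas, but is uniform across all phases and yields precisely what the lemma asserts.
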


\begin{proof}
On $\mathring D$ we have
\begin{equation*}
\begin{split}
\partial^{2}_m F (y,m) &= - \left( \cfrac{\varphi_+}{\varphi_+ -\varphi_-} \right)^2 \cfrac{1}{y+1} s^{\prime \prime} \left( \cfrac{\xi_0 -E(m)}{y+1} \right)\\
& - \left( \cfrac{\varphi_-}{\varphi_+ -\varphi_-} \right)^2 \cfrac{1}{1-y} s^{\prime \prime} \left( \cfrac{m- (y \, \xi_0 + E(m))}{1-y} \right) \\
\partial^2_{y,m} F (y,m) &= - \left( \cfrac{\varphi_+}{\varphi_+ -\varphi_-} \right) \cfrac{\xi_0 -E(m)}{(y+1)^2} s^{\prime \prime} \left( \cfrac{\xi_0 -E(m)}{y+1} \right) \\
&+ \left( \cfrac{\varphi_-}{\varphi_+ -\varphi_-} \right) \cfrac{(m-E(m)-\xi_0)}{(1-y)^2} s^{\prime \prime} \left( \cfrac{m- (y \, \xi_0 + E(m))}{1-y} \right).
\end{split}
\end{equation*}

One easily checks that
\begin{equation*}
\begin{split}
&(\partial_{y}^2 F)(\partial_m^2 F) - (\partial^2_{y,m} F)^2 \\
&= \cfrac{s^{\prime\prime} (a) s^{\prime \prime } (b)}{(\varphi_+ -\varphi_-)^2(y^2-1)^2} \left[ \varphi_+ \sqrt{\cfrac{1+y}{1-y}} (m-E(m)-\xi_0) + \varphi_- \sqrt{\cfrac{1-y}{1+y}} (\xi_0 -E(m))\right]^2,
\end{split}
\end{equation*}
where

$$ a= \cfrac{y\xi_0 -E(m)}{y+1} \quad \textrm{and} \quad b= \cfrac{m- (y \, \xi_0 + E(m))}{1-y}.$$
By convexity of the function $s$ it follows that $(\partial_{y}^2 F)(\partial_m^2 F) - (\partial^2_{y,m} F)^2 >0 $ if $m \ne \alpha,\beta$.

By the implicit function theorem, the function $m \to F(y(m),m)$ is smooth on $(m_-, m_+)$ and
\begin{equation*}
\begin{split}
&f'' (m)= \partial^2_m F(y(m),m) + \partial^2_{y,m} F ( y(m),m)\,  y' (m)\\
&\cfrac{d}{dm} \left[ \partial_y F  (y(m),m)\right]=0=(\partial_{m,y}^2 F)(y(m),m)+(\partial_y^2 F)(y(m),m) \, y' (m),
\end{split}
\end{equation*}
so that
\begin{equation*}
f'' (m) = \frac{\left[ (\partial_{y}^2 F)(\partial_m^2 F) - (\partial^2_{y,m} F)^2\right](y(m),m) }{\partial_y^2 F(y(m),m)} .
\end{equation*}
Recalling from the proof of lemma \ref{lemma use} that $\partial_y^2 F <0$, we get  that $f'' (m) < 0$.

\end{proof}

We have to compute $\sup_{m \in [0,2]} f(m)$. Since $f$ is strictly concave there exists a unique $m_0 \in [m_-,m_+]$ for which the supremum of $f$ is attained.

The point $m_0$ belongs to $(m_-,m_+)$ if and only if there exists $m \in (m_-,m_+)$ such that $f'(m)=0$. This is equivalent to the existence of $m \in (m_-,m_+)$ and $y \in (a_m, b_m)$ such that
\begin{equation}
\label{eq:Fym}
\partial_m F(y,m) =0, \quad \partial_{y} F (y,m)=0.
\end{equation}
To simplify notations we introduce
\begin{equation*}
\tilde{a}= s^{\prime} \left( \cfrac{y\xi_0 +E(m)}{y+1} \right)\quad \textrm{and} \quad \tilde{b}= s^{\prime} \left( \cfrac{m- (y \, \xi_0 + E(m))}{1-y}\right),
\end{equation*}
so that $\tilde{a}:=s'(a)$ and $\tilde{b}:=s'(b)$, where $a$ and $b$ were introduced above.
Then (\ref{eq:Fym}) is equivalent to
\begin{equation*}
\tilde b=\cfrac{\varphi_+}{\varphi_-} \;\;\tilde a\quad \textrm{and} \quad \xi_0 \left( 1-\cfrac{\varphi_-}{\varphi_+}\right) \tilde b + \log \left( \cfrac{1+e^{\frac{\varphi_-}{\varphi_+}\tilde b }}{1+e^{\tilde b}}\right) =0.
\end{equation*}
There are two solutions to the second equation, $\tilde b=0$ and $\tilde b=\varphi_+$.\\

If $\tilde b=0$ then from $a=b=1/2$ we get that $m=1$. As a consequence we obtain that $y=(1/2-E(1))/(\xi_0 -1/2)$. The condition $y \in (a_1,b_1)$ implies that $E(1) < \sup \{1-\xi_0, \xi_0\}$,
which is in contradiction with the fact that $m=1$ shall satisfy $\sup\{ \xi_0, m-\xi_0\} \le E(m) \le m - (m-1) \xi_0$.\\

 If $\tilde b=\varphi_+$ then
\begin{equation*}
a=\cfrac{y\xi_0 +E(m)}{y+1}=\rho_- \quad \textrm{and} \quad b=\cfrac{m-(y \xi_0 +E(m))}{1-y} =\rho_+.
\end{equation*}
As a consequence we obtain that $y= \cfrac{m-\rho_+ - \rho_- }{\rho_- -\rho_+}.$ Then we get that
\begin{equation*}
E(m) - \xi_0 = (\rho_- - \xi_0) (y+1), \quad (m- E(m) -\xi_0) =(\rho_+ -\xi_0) (1-y).
\end{equation*}
Thus the conditions $\sup \{ \xi_0, m-\xi_0\} \le E(m)$ and $y \in (-1,1)$ imply $\rho_- \ge \xi_0 \ge \rho_+$. But we assumed $\rho_- < \rho_+$ and we have a contradiction.

Therefore $m_0 \in \{m_-, m_+\}$. Consequently, for any regular value $E \in [E_{+\infty}^-\,;\, E_{+\infty}^+]$,
\begin{equation*}
{S}^+ (E) = \frac{1}{2} \; \sup \Big\{ \, f(m_-) \, , \, f(m_+) \, \Big\}.
\end{equation*}

Recall that the set $\{m_-,m_+\}$ is equal to $\{\alpha,\gamma\}$ or to $\{\beta, \gamma\}$ and  that $f(\gamma)=0$, $f(\alpha)=-2 s(\alpha /2)$ and $f(\beta)= -2 s(\beta/2)$. Thus, by using the results in Table 1, we have
\begin{equation*}
{S}^+ (E)=
\begin{cases}
\vspace{1mm}
- s (\sup\{\alpha,\beta\}/2), \quad 0 <\varphi_- < \varphi_+,\\
\vspace{1mm}
- s (\inf\{\alpha,\beta\}/2), \quad \varphi_- < \varphi_+<0,\\
\vspace{1mm}
- s (\alpha/2), \quad 0 < -\varphi_- \le \varphi_+,\\
\vspace{1mm}
- s(\beta/2), \quad 0 < \varphi_+ \le -\varphi_-.
\end{cases}
\end{equation*}

By definition of $\alpha$ and $\beta$ we have that
\begin{equation*}
\frac{\alpha}{2}= \frac{1}{\varphi_+} \left\{ (E+{\bar V}^+) + \log(1+e^{\varphi_+})\right\}\quad \textrm{and} \quad \frac{\beta}{2} =\frac{1}{\varphi_-} \left\{ (E+{\bar V}^+) + \log (1+e^{\varphi_-}) \right\}.
\end{equation*}
Recall from \eqref{W function} the definition of $W(\varphi_-, \varphi_+)$ and let $\varphi =\log (\rho /(1-\rho))$. Observe that the function $x \to S_{\rho, \rho} (-x)$ is a concave function equal to $- \infty$ outside $[i_1,i_2]:=[ - \log (1+e^{-|\varphi|}) \, ; \, - \log (1+ e^{|\varphi|}) ]$, positive inside, vanishing at the boundaries of the interval, attaining its maximum equal to $\log(2)$ for $x_0:=(i_1+i_2)/2= \varphi/2 -\log (1+e^{\varphi})$. It is increasing on $[i_1,x_0]$ and decreasing on $[x_0,i_2]$.

\subsection{The case $\frac{1}{2}\le 1-\rho_- \le \rho_+ $:}

\vspace{2mm}

\quad

\vspace{2mm}

Recall that the energy band is given by
$$-{\bar V}^+ + [ -\log (1+e^{\varphi_+}) \, ; \, W(\varphi_-, \varphi_+)].$$
The entropy function ${S}^+$, in the energy band, is given by ${S}^+ (E)  =S_{\rho_+, \rho_+} (-(E+{\bar V}^+)).$
Remark that we have $W(\varphi_-, \varphi_+) + \log (1+e^{\varphi_-}) = \varphi_- \xi_0 <0$ since $\xi_0 \in (0,1)$. Thus, $W(\varphi_-,\varphi_+) < -\log (1+e^{-\varphi_+})$ and the function ${S}^+$ is concave, smooth in the interior of the energy band, but does not vanish at the top of the energy band.

\subsection{The case $\frac{1}{2}\le  \rho_+ \le 1-\rho_- $:}

\vspace{2mm}

\quad

\vspace{2mm}

Recall that the energy band is given by
$$-{\bar V}^+ + [ -\log (1+e^{-\varphi_-}) \, ; \, W(\varphi_-, \varphi_+)].$$
The entropy function ${S}^+$, in the energy band, is given by ${S}^+ (E)  =S_{\rho_-, \rho_-} (-(E+{\bar V}^+)).$
The function ${S}^+$ is concave, smooth in the interior of the energy band, but does not vanish at the top of the energy band.

\vspace{2mm}

It remains now to prove the last statement of Theorem \ref{th:ent1}. Let us assume that $\rho$ is a maximizer of $S^{+} (E)$, $E$ belonging to the energy band. We use the notations of the proof of Proposition  \ref{prop:ent1}. In the proof of this proposition, we have seen that $\rho$ being a maximizer of $S^+ (E)$ is equivalent to the fact that $(y_{\rho}, H_{\rho} (1)) \in D(E)$ being a maximizer of the function $F$ over $D(E)$ and $\rho$ is such that $H_{\rho}$ is linear on $[-1,y_{\rho}]$ and on $[y_{\rho},1]$ with $H_{\rho} (y_\rho) = y_\rho \xi_0 +E(m_{\rho})$. Moreover, we have seen above that such a maximizer $(y,m)$ satisfies $y = \pm 1$ and
\begin{enumerate}[i)]
\item $m= \sup (\{\alpha,\beta\})=
\begin{cases}
\beta \text{ if } E+{\bar V}^+ > W(\rho_-, \rho_+),\\
\alpha \text{ if } E+{\bar V}^+ \le  W(\rho_-, \rho_+),\\
\end{cases}
, \quad 1/2 \le \rho_-<\rho_+$,
\item $m= \inf (\{\alpha,\beta\})=
\begin{cases}
\beta \text{ if } E+{\bar V}^+ < W(\rho_-, \rho_+),\\
\alpha \text{ if } E+{\bar V}^+ \ge  W(\rho_-, \rho_+),\\
\end{cases}
, \quad \rho_-<\rho_+\le 1/2$,
\item $m=\alpha \quad 1/2 \le 1-\rho_-\le \rho_+$,
\item $m=\beta \quad 1/2 \le \rho_+ \le 1-\rho_-$.
\end{enumerate}

 This implies in particular that if $\rho$ is a maximizer of $S^+ (E)$ then $H_{\rho}$ is linear with a slope equal to  $m_{\rho}/2$, i.e. $\rho$ is constant equal to $1\, -\, m_{\rho}/2$.  Since, by definition, we have
 \begin{equation*}
 \alpha= 2\,  \frac{\log(1+e^{\varphi_+}) + (E+{\bar V}^+)}{\varphi_+}\quad \textrm{and} \quad \beta= 2 \, \frac{\log(1+e^{\varphi_-}) + (E+{\bar V}^+)}{\varphi_-},
 \end{equation*}
we get the result.

\subsection{ The case $\rho_- < \rho_+ \le \frac{1}{2}$: }

\vspace{2mm}

\quad

\vspace{2mm}

 Recall that the energy band is given by
$$-{\bar V}^+ + [-\log(1+e^{-\varphi_-}) \, ; \, -\log (1+e^{\varphi_+})].$$
The condition $\alpha/ 2 \le \beta /2$ is equivalent to
\begin{equation*}
E +{\bar V}^+ \ge W(\varphi_-, \varphi_+).
\end{equation*}
Observe that $W(\varphi_-, \varphi_+) + \log (1+e^{\varphi_+}) = \varphi_+ \xi_0$ and $\xi_0 \in (0,1)$,  so that
\begin{equation*}
-\log (1+e^{-\varphi_-}) < -\log(1+e^{-\varphi_+}) \le W(\varphi_-, \varphi_+) \le -\log (1+e^{\varphi_+}) < -\log (1+ e^{\varphi_-}).
\end{equation*}

Moreover, we have $\xi_0 <1/2$ because there exists ${\tilde \varphi} \in [\varphi_-, \varphi_+]$ such that $\xi_0= \frac{e^{\tilde \varphi}}{1+e^{\tilde \varphi}}.$
This implies that $W(\varphi_-, \varphi_+) \ge  \frac{\varphi_+}{2} - \log (1+e^{\varphi_+}) $. We recall that $ \frac{\varphi_+}{2} - \log (1+e^{\varphi_+})$ is the first coordinate of the point for which the concave function $x \to S_{\rho_+,\rho_+} (-x)$ attains its maximum given by $\log(2)$. It follows that ${S}^+ $ is a concave function. On the energy band it is given by
\begin{equation*}
\begin{split}
{S}^+ (E) &= S_{\rho_-, \rho_-} (-(E+{\bar V}^+)) \, {\bf 1}\{(E+{\bar V}^+) \le W(\varphi_-,\varphi_+)\}\\&+ S_{\rho_+, \rho_+} (-(E+{\bar V}^+)) \,  {\bf 1}\{(E+{\bar V}^+) > W(\varphi_-,\varphi_+)\}.
\end{split}
\end{equation*}
The function ${S}^+$ is not differentiable at the point $W(\varphi_-,\varphi_+) - {\bar V}^+$.

\subsection{The case $\frac{1}{2}\le \rho_- < \rho_+$:}

\vspace{2mm}

\quad

\vspace{2mm}

Recall that the energy band is given by
$$-{\bar V}^+ + [-\log(1+e^{\varphi_+}) \, ; \, -\log (1+e^{-\varphi_-}) ].$$
The condition $\alpha/ 2 \le \beta /2$ is equivalent to $E +{\bar V}^+ \ge W(\varphi_-, \varphi_+).$ Observe that $W(\varphi_-, \varphi_+) + \log (1+e^{\varphi_-}) = \varphi_- \xi_0$ and  $\xi_0 \in (0,1)$,  so that
\begin{equation*}
-\log (1+e^{\varphi_+}) \le -\log (1+e^{\varphi_-}) \le W(\varphi_-, \varphi_+) \le -\log (1+e^{-\varphi_-}) \le -\log (1+e^{-\varphi_+}).
\end{equation*}
Moreover, we have $\xi_0 >1/2$ because there exists ${\tilde \varphi} \in [\varphi_-, \varphi_+]$ such that $\xi_0= \frac{e^{\tilde \varphi}}{1+e^{\tilde \varphi}}.$
This implies that $W(\varphi_-, \varphi_+) \ge \frac{\varphi_-}{2} - \log (1+e^{\varphi_-}) $. We recall that $ \frac{\varphi_-}{2} - \log (1+e^{\varphi_-})$ is the first coordinate of the point for which the concave function $x \to S_{\rho_-,\rho_-} (-x)$ attains its maximum given by $\log(2)$. It follows that ${S}^+$ is a concave function. On the energy band it is given by
\begin{equation*}
\begin{split}
{S}^+ (E) &= S_{\rho_-, \rho_-} (-(E+{\bar V}^+)) \, {\bf 1}\{(E+{\bar V}^+) \ge W(\varphi_-,\varphi_+)\} \\
&+ S_{\rho_+, \rho_+} (-(E+{\bar V}^+)) \,  {\bf 1}\{(E+{\bar V}^+) < W(\varphi_-,\varphi_+)\}.
\end{split}
\end{equation*}
 The function ${S}^+$ is not differentiable at the point $W(\varphi_-,\varphi_+) - {\bar V}^+$.

\section{Proof of Theorem \ref{th:pressure-competitive}}\label{sec:A3}

In order to prove the theorem, we have simply to compute the Legendre transform of $S^+$ whose explicit form is given in Theorem \ref{th:ent1}. Recall also that the Legendre transform of the function $S_{\rho,\rho}$ defined by (\ref{eq:entropysimple}) is given by the function $P(\varphi,\cdot)$ defined by (\ref{Pfunction}).

\subsection{The case $\frac{1}{2}\leq 1-\rho_-< \rho_+ $:}

\vspace{2mm}

\quad

\vspace{2mm}

For any $E \in (E_{+\infty}^- \, ; \, E_{+\infty}^+)$ we have that $\frac{d S^+}{dE}$ is a decreasing function and
$$\lim_{E \to E_{+\infty}^-} \frac{d S^+}{dE} =+\infty, \quad  \lim_{E \to {E}_{+\infty}^+} \frac{d S^+}{dE}=\theta_0^-.$$

We get that
\begin{equation*}
P^+ (\theta) =
\begin{cases}
\vspace{1mm}
&P(\varphi_-, -\theta) -\theta {\bar V}^+, \quad \theta \ge \theta_0^-,\\
\vspace{1mm}
& P^+ (\theta_0) + E_{+\infty}^+ (\theta- \theta_0^-), \quad \theta <\theta_0^-.
\end{cases}
\end{equation*}

\subsection{The case $\frac{1}{2}\leq \rho_+ < 1-\rho_- $:}

\vspace{2mm}

\quad

\vspace{2mm}

For any $E \in (E_{+\infty}^- \, ; \, E_{+\infty}^+)$ we have that $\frac{d S^+}{dE}$ is a decreasing function and
$$\lim_{E \to E_{+\infty}^-} \frac{d S^+}{dE} =+\infty, \quad  \lim_{E \to {E}_{+\infty}^+} \frac{d S^+}{dE}=\theta_0^+.$$

We get that
\begin{equation*}
P^+ (\theta) =
\begin{cases}
\vspace{1mm}
&P(\varphi_+, -\theta) -\theta {\bar V}^+, \quad \theta \ge \theta_0^+,\\
\vspace{1mm}
& P^+ (\theta_0) + E_{+\infty}^+ (\theta- \theta_0), \quad \theta <\theta_0^+.
\end{cases}
\end{equation*}

\subsection{ The case $\rho_- < \rho_+\leq \frac{1}{2}$:}

\vspace{2mm}

\quad

\vspace{2mm}

In this case we have that $\theta_0^+ \le  \theta_0^-$ because $\xi_0 \in (0,1/2)$. We get similarly that
\begin{equation*}
P^+ (\theta) =
\begin{cases}
\vspace{1mm}
&P(\varphi_-, -\theta) -\theta {\bar V}^+, \quad \theta \ge \theta_0^-,\\
\vspace{1mm}
& P(\varphi_+, -\theta) -\theta {\bar V}^+,  \quad \theta \le \theta_0^+,\\
\vspace{1mm}
&P^+ (\theta_0^+) + \cfrac{P^+(\theta_0^-) - P^+ (\theta_0^+)}{\theta_0^- - \theta_0^+}(\theta-\theta_0^+), \quad \theta \in (\theta_0^+, \theta_0^-).
\end{cases}
\end{equation*}

\subsection{The case $\frac{1}{2} \le \rho_- < \rho_+$:}

\vspace{2mm}

\quad

\vspace{2mm}

The function $S^+$ is differentiable everywhere in the interior of the energy band apart from the point $W(\varphi_-, \varphi_+) -{\bar V}^+$. For $E=W(\varphi_-, \varphi_+) -{\bar V}^+$, ${S^+}$ has a left-tangent and a right-tangent.  Moreover, $\frac{d S^+}{dE}$ is decreasing on $(E_{+\infty}^-\, ; \,  W(\varphi_-,\varphi_+) -{\bar V}^+)$ and increasing on $(W(\varphi_-,\varphi_+) -{\bar V}^+\, ; \, E_{+\infty}^+)$. We have
$$\lim_{E \to E_{+\infty}^\pm} \frac{d S^+}{dE} =\mp\infty \quad \textrm{and} \quad  \lim_{E \to [W(\varphi_-, \varphi_+) -{\bar V}^+]^\pm} \frac{d S^+}{dE}=\theta_0^\pm. $$

Observe that $\theta_0^- \le  \theta_0^+$ because $\xi_0 \in (1/2 ,1)$. We get easily that
\begin{equation*}
P^+ (\theta) =
\begin{cases}
\vspace{1mm}
&P(\varphi_+, -\theta) -\theta {\bar V}^+, \quad \theta \ge \theta_0^+,\\
\vspace{1mm}
& P(\varphi_-, -\theta) -\theta {\bar V}^+,  \quad \theta \le \theta_0^-,\\
\vspace{1mm}
&P^+ (\theta_0^-) + \cfrac{P^+(\theta_0^+) - P^+ (\theta_0^-)}{\theta_0^+ - \theta_0^-}(\theta-\theta_0^-), \quad \theta \in (\theta_0^-, \theta_0^+).
\end{cases}
\end{equation*}

\section{Proof of Theorem \ref{th:ent2}}
\label{sec:A2}

In this section we determine the extremal points of the domain $\D$ according to the position of $E$ along the energy band, we find the supremum of $F(\cdot,\cdot,y_-,y_+)$ among those points and then we maximize over $y_-$ and $y_+$.

\subsection{The case $\rho_- \leq \frac{1}{2}\leq \rho_+$:}

\vspace{2mm}

\quad

\vspace{2mm}

This case corresponds to $\varphi_- \leq 0\leq \varphi_+$, therefore $m=(-s)(\rho_0)$ and $M=(-s) (1/2)=\log(2)=-E_{-\infty}^--\bar{V}^- $. Since $\gamma_+'(y_+)=\varphi_+>0$, then the function $\gamma_+$  is increasing.
On the other hand, since $\gamma_-'(y_-)=\varphi_-<0$, the function $\gamma_-$ is decreasing.  Since
$\gamma_+(\rho_+):=\varphi_+/(1+e^{-\varphi_+})-\log(1+e^{\varphi_+})$ and the function $t\to  t/(1+e^{-t})-\log(1+e^t)$ is increasing in $(0,+\infty)$,
we obtain that $-M\leq{\gamma_+(\rho_+)}<\gamma_+(1) $. On the other hand, since
$\gamma_-(\rho_-)=\varphi_-/(1+e^-\varphi_-)-\log(1+e^{\varphi_-})$ and the function $t\to t/(1+e^{-t})-\log(1+e^t)$ is decreasing
in $(-\infty,0)$ we obtain that $-M\leq{\gamma_-(\rho_-)}\leq{\gamma_-(0)}$.

\vspace{3mm}

We only consider the case $\frac{1}{2} \leq 1-\rho_-< \rho_+$ (which corresponds to $\varphi_0=\varphi_+$), the case $\frac{1}{2} \leq \rho_+< 1-\rho_-$ (which corresponds to $\varphi_0=-\varphi_-$) being similar.

\vspace{2mm}

Since, $-m=\gamma_+(\rho_+)$ we have that  $-M<-m<\gamma_+(1)=E_{-\infty}^++\bar{V}^-$. As a consequence, $\gamma_+(y_+)\geq{-m}$ for all $y_+\in[\rho_+,1]$.
Notice that the function
$t\to t/(1+e^{-t})-\log(1+e^{t})$ is even and increasing in $(0,+\infty)$. Therefore, $\gamma_-(\rho_-)\leq{\gamma_+(\rho_+)}$.
On the other hand, $\gamma_-(0)\leq{\gamma_+(1)}$, which implies that  $-M\leq{\gamma_-(\rho_-)}\leq{-m}=\gamma_+(\rho_+)\leq{\gamma_+(1)}$.
Now, two things can happen, either $\gamma_-(0)>-m$ or $\gamma_-(0)<-m$. We start by the former.

\vspace{0.3cm}

 \textbf{(a) $\gamma_-(0)>-m$:} Since we do not know the sign of $\gamma_--\gamma_+$ we split again into two cases:  $\gamma_-<\gamma_+$ and $\gamma_->\gamma_+$. We  start by the former.

\quad

\textbf{(a.1) Case $\gamma_->\gamma_+$:} Recall the intersection points of the lines $D_m$ and $D_M$ from Section \ref{sec entropy 2}. In this case $X_0$ is in the square $[-1,1]^2$ if and only if $\gamma_+ \le E+{\bar V}^- \le  \gamma_-$. We first restrict to the case $X_0 \in [-1,1]^2$, i.e.  $\gamma_+ \le E+{\bar V}^- \le \gamma_-$.
Now, we check wether $X_m$, $X_M$, $Y_m$ and $Y_M$ are in the domain $D$. We start with $X_m$, and the same computations holds for $X_M$. For that purpose, it is enough to notice that $X_m$ satisfies the third equation in \eqref{eq:cons3}, that is
\begin{equation*}
\cfrac{(E+{\bar V}^-) +m }{\gamma_+ +m}  \le \cfrac{(E +{\bar V}^-) +M}{\gamma_+ +M}.
\end{equation*}
Since the function $t\to (E+\bar{V}^-+t)/(\gamma_++t)$ is decreasing and $m<M$ we conclude that $X_m$ is not in the domain $D$. Analogously one shows that $X_M$ is not in ${D}$. By replacing $\gamma_+$ by $\gamma_-$ in the computations above, one shows that $Y_m$ and $Y_M$ are in the domain $D$.

Now, if $X_0 \notin [-1,1]^2$, i.e. $E+{\bar V}^->\gamma_-$ then the same computation as done above shows that $Y_m$ and $Y_M$ are not in $D$ and as a consequence $D$ is empty; and if $E+{\bar V}^-<\gamma_+$ then $X_m$ and $X_M$ are not in $D$ and as a consequence $D$ is empty.

So, we are restricted to the case $\gamma_+ \le E+{\bar V}^- \le \gamma_-$. It remains to compute
\begin{equation*}
\sup_{(y_-,y_+) \in{\Gamma}}\Big\{F(X_0,y_-,y_+), F(Y_m,y_-,y_+), F(Y_M,y_-,y_+)\Big\},
\end{equation*}
where $\Gamma:=\{(y_-,y_+)\, : \,  \gamma_+\leq{E+\bar{V}^-}\leq{\gamma_-}\}$. Observe now that whatever the value of $y_-$ is, we have that
\begin{equation*}
    \begin{split}
&F(Y_m,y_-,y_+)= \cfrac{(E+{\bar V}^-) +m }{\gamma_- +m} \left( (-s) (y_-) + \gamma_- \right),\\
&F (Y_M,y_-,y_+)= \cfrac{(E+{\bar V}^-) +M }{\gamma_-- +M} \left( (-s) (y_-) + \gamma_- \right).
\end{split}
\end{equation*}
Now we notice that:
\begin{equation*}
\begin{split}
(-s(y_-)+\gamma_-)&=-\Big(s\Big(\frac{\gamma_-+\log(1+e^{\varphi_-})}{\varphi_-}\Big)-\gamma_-)\\
&=-\Big(s\Big(\frac{-\gamma_--\log(1+e^{-\varphi_-})}{\varphi_-}\Big)-\gamma_-\Big)\\
&=-\Big(-S_{1-\rho_-,1-\rho_-}(-\gamma_-)-\gamma_-\Big)\\
&=-J_{\rho_-,\rho_-}(-\gamma_-).
\end{split}
\end{equation*}
 In the second equality above, we wrote $y_-$ in terms of $\gamma_-$, in the third equality we used the fact that $s(\theta)=s(1-\theta)$ for
  all $\theta\in(0,1)$ and in fourth equality we used that for any $\rho \in [0,1]$,
\begin{equation}\label{ident S}
S_{\rho,\rho} =S_{1-\rho,1-\rho}
\end{equation}
 together with the definition of $J_{\rho,\rho}$ given above.
   The same argument also shows that $(-s(y_+)+\gamma_+)=-J_{\rho_+,\rho_+}(-\gamma_+)$. Then we conclude that
\begin{equation*}
\begin{split}
&F(Y_m,y_-,y_+)= - \cfrac{(E+{\bar V}^-) +m }{\gamma_- +m} J_{\rho_-,\rho_-} (-\gamma_-)\\
&F (Y_M,y_-,y_+)= - \cfrac{(E+{\bar V}^-) +M }{\gamma_- +M} J_{\rho_-,\rho_-} (-\gamma_-).
\end{split}
\end{equation*}

 Observe that since the function $t\to - ((E+{\bar V}^-) +t ) / (\gamma_- +t)$ is decreasing, $m \leq{M}$ and $J_{\rho_-,\rho_-} (-\gamma_-) \ge 0$,
 we get that
$F \left(Y_m, y_-, y_+ \right)\geq F \left(Y_M, y_-, y_+ \right).$

Now we recall some properties of the function $J_{\rho,\rho}$, for $\rho \in [0,1]$. At first we notice that by \eqref{ident S},
we have that $J_{\rho,\rho}=J_{1-\rho,1-\rho}$. The function $J_{\rho,\rho}$ is convex and positive apart from the point $-s(\rho)$ where it vanishes. As a consequence the function $J_{\rho_-,\rho_-}$ is convex, non-negative and finite on $[\log(1+e^{\varphi_-})\,;\,\log(1+e^{-\varphi_-})]$ and has a minimum equal to $0$ at the point $-s(\rho_-)=\gamma_-(\rho_-)$. Analogously, the function $J_{\rho_+,\rho_+}$ is convex, non-negative and finite on $[\log(1+e^{-\varphi_+})\,;\,\log(1+e^{\varphi_+})]$ and has a minimum equal to $0$ at the point $-s(\rho_+)= \gamma_+(\rho_+)$. Since $-s(\rho_-)\leq{m}=-s(\rho_+)$ then for all $y\in{I}:=[\log(1+e^{-\varphi_+})\,;\,m]$ we have that $J_{\rho_-,\rho_-}(y) \geq{J_{\rho_+,\rho_+}(y)}$. In particular, since $\gamma_-\in{I}$ we obtain that
$J_{\rho_-,\rho_-}(-\gamma_-)
\geq{J_{\rho_+,\rho_+}(-\gamma_-)}.$ Putting together the previous observations, the fact that $E+{\bar V}^-\geq{\gamma_+}\geq{m} $ and $\gamma_->-m$,
we conclude that
\begin{equation*}
F(Y_m,y_-,y_+)= - \cfrac{(E+{\bar V}^-) +m }{\gamma_- +m} J_{\rho_-,\rho_-} (-\gamma_-)
\leq {- \cfrac{(E+{\bar V}^-) +m }{\gamma_- +m} J_{\rho_+,\rho_+} (-\gamma_-)}.
\end{equation*}
On the other hand,  by computing the derivative of the function
\[
G(\gamma_-)=- \cfrac{(E+{\bar V}^-) +m }{\gamma_- +m} J_{\rho_+,\rho_+} (-\gamma_-)
\]
with respect to $\gamma_-$ and using the fact that the function  $J_{\rho_+,\rho_+} (-\gamma_-)$ is convex at the point $\gamma_-$,
 we conclude that $G(\cdot)$ is decreasing.  Then,
\begin{equation*}
 \begin{split}
\sup_{(y_-,y_+) \in{\Gamma}}F(Y_m,y_-,y_+) & \leq \sup_{ \gamma_-\geq E+\bar{V}^-}G(\gamma_-)\\
&=G(E+{\bar V}^-) \\&=-J_{\rho_+,\rho_+} \Big(-(E+{\bar V}^-)\Big).
\end{split}
\end{equation*}
Now we rewrite  $F \Big(X_0, y_-, y_+ \Big)$ as
\begin{equation*}
 F \Big(X_0, y_-, y_+ \Big)=-\cfrac{(E+{\bar V}^-) -\gamma_+ }{\gamma_--\gamma_+} J_{\rho_-,\rho_-} (-\gamma_-)-\cfrac{\gamma_- -(E+\bar{V}^-)}{\gamma_--\gamma_+}
 J_{\rho_+,\rho_+} (-\gamma_+).
\end{equation*}
By computing the derivative with respect to $\gamma_+$ of $F(X_0,y_-,y_+)$, and noticing that both $J_{\rho_+,\rho_+}$ and $J_{\rho_-,\rho_-}$ are convex, we obtain that $F(X_0,y_-,y_+)$ is increasing as a function of $\gamma_+$.  Then
\begin{equation*}
 \begin{split}
\sup\Big\{ F(X_0,y_-,y_+): \, (y_-,y_+) \in{\Gamma}\Big\} &\leq  \sup\Big\{F(X_0,y_-,y_+): \, \gamma_-\geq E+\bar{V}^-\Big\}\\
&=-J_{\rho_+,\rho_+} \Big(-(E+{\bar V}^-)\Big).
\end{split}
\end{equation*}

Putting together the previous computations we obtain that

 \begin{equation*}
 \begin{split}
\sup_{(y_-,y_+) \in{\Gamma}}\Big\{F(X_0,y_-,y_+), F(Y_m,y_-,y_+), F(Y_M,y_-,y_+)\Big\}=&\sup_{ \gamma_+ \le E+{\bar V}^- \le  \gamma_-}F(X_0,y_-,y_+)\\
 =&\sup_{\gamma_+=E+{\bar V}^-}\Big\{F(X_0,y_-,y_+)\\
=&-J_{\rho_+,\rho_+} \Big(-(E+{\bar V}^-)\Big).
\end{split}
\end{equation*}

\quad

\textbf{(a.2) Case $\gamma_-<\gamma_+$:} A simple computation shows that $X_0$ belongs to the square $[-1,1]^2$ if and only if $\gamma_- \le E+{\bar V}^- \le  \gamma_+$.
As above, we first restrict to $\gamma_- \le E+{\bar V}^- \le  \gamma_+$. A simple computation as performed above, shows that $X_m$ and $X_M$ are in
 $D$ and $Y_m$ and $Y_M$ are not in $D$.

 Now, if $X_0 \notin [-1,1]^2$, i.e. $E+{\bar V}^->\gamma_+$ then the same computation as done above shows that $X_m$ and $X_M$ are not in $D$ and as a consequence $D$ is empty; and if $E+{\bar V}^-<\gamma_-$ then $Y_m$ and $Y_M$ are not in $D$ and as a consequence $D$ is empty.

So, we are restricted to the case $\gamma_- \le E+{\bar V}^- \le \gamma_+$. Then, it remains to compute
\[
\sup_{(y_-,y_+) \in{\Gamma}}\Big\{F(X_0,y_-,y_+), F(X_m,y_-,y_+), F(X_M,y_-,y_+)\Big\},
\]
where $\Gamma:=\{(y_-,y_+): \gamma_-\leq{E+\bar{V}^-}\leq{\gamma_+}\}$.
By inverting the role of $\rho_-$ with $\rho_+$ and of $\gamma_-$ with $\gamma_+$ in the proof of the previous case, we obtain here that the previous supremum equals to $-J_{\rho_+,\rho_+} \Big(-(E+{\bar V}^-)\Big).$
\vspace{0.3cm}

 \textbf{(b) $\gamma_-(0)<-m$:} In this case we have that $\gamma_-<-m<\gamma_+$. As above, we have to check whether  the points $X_0,X_m,X_M,Y_m,Y_M$ are in the domain $D$ or not.

The point $X_0$ belongs to $[-1,1]^2$ if and only if $\gamma_- \le E+{\bar V}^- \le \gamma_+$. If $X_0 \notin [-1,1]^2$, i.e.  $E+{\bar V}^->\gamma_+$, then $X_m$ and $X_M$ are not in the domain $D$ and as a consequence $D$ is empty.

Then we restrict to $\gamma_- \le E+{\bar V}^- \le \gamma_+$. A simple computation shows that $X_m$ and $X_M$ belong to the domain $D$, but $Y_M, Y_m$ are not in $D$. So we have to compute
\[
\sup_{(y_-,y_+) \in{\Gamma}}\Big\{F(X_0,y_-,y_+), F(X_m,y_-,y_+), F(X_M,y_-,y_+)\Big\},
\]
where $\Gamma:=\{(y_-,y_+): \gamma_-\leq{E+\bar{V}^-}\leq{\gamma_+}\}$.

As above easily we can show that $F(X_m,y_-,y_+)\geq F(X_M,y_-,y_+)$. Now we have to compare $F(X_m,y_-,y_+)$ with $F(X_0,y_-,y_+)$.
A simple computation shows that $F \left(X_0, y_-, y_+ \right)$ can be written as
\begin{equation*}
F \Big(X_0, y_-, y_+ \Big)=-\cfrac{\gamma_+-(E+{\bar V}^-) }{\gamma_+-\gamma_-} J_{\rho_-,\rho_-} (-\gamma_-)
-\cfrac{\gamma_- -(E+\bar{V}^-)}{\gamma_--\gamma_+} J_{\rho_+,\rho_+} (-\gamma_+).
\end{equation*}
Since $ J_{\rho_-,\rho_-} $ is a positive function, $E+{\bar V}^- \le  \gamma_-$ and $\gamma_+>\gamma_-$ we have that
\begin{equation*}
F \Big(X_0, y_-, y_+ \Big)\leq -\cfrac{\gamma_- -(E+\bar{V}^-)}{\gamma_--\gamma_+} J_{\rho_+,\rho_+} (-\gamma_+).
\end{equation*}

Now, since the function $t\to -(E+{\bar V}^-+t)/(\gamma_++t)$ is decreasing and $m>-\gamma_-$ we obtain that $F(X_0,y_-,y_+)\leq{F(X_m,y_-,y_+)}$.

It follows that
\begin{equation*}
\begin{split}
&\sup_{(y_-,y_+) \in{\Gamma}}\Big\{F(X_0,y_-,y_+), F(X_m,y_-,y_+), F(X_M,y_-,y_+)\Big\}\\
=&\sup_{(y_-,y_+) \in{\Gamma}} F(X_m,y_-,y_+)\\=& \sup_{\gamma_+ \geq E+ {\bar V}^-}F(X_m,y_-,y_+) \\
 =& -J_{\rho_+,\rho_+} \Big(-(E+{\bar V}^-)\Big).
\end{split}
\end{equation*}

By the conclusions above together with \eqref{linear entropy}, we obtain that the restriction of the entropy function $S^-$ to $[E_{-\infty}^- \, ; \, E_{-\infty}^+]$ is given by
\begin{equation*}
\begin{split}
&S^-(E)=\\
&=\begin{cases}
\vspace{0.2cm}
-(E+\bar{V}^-), -\log(2)\leq{E+\bar{V}^-}\leq{\frac{\varphi_0}{1+e^{-\varphi_0}}-\log(1+e^{\varphi_0})},\\
\vspace{0.2cm}
S_{\rho_0,\rho_0}\Big(-(E+\bar{V}^-)\Big), \frac{\varphi_0}{1+e^{-\varphi_0}}-\log(1+e^{\varphi_0})<E+\bar{V}^-\leq{- \log (1+e^{-\varphi_0})}.
\end{cases}
\end{split}
\end{equation*}
Above we used the equality $S_{\rho_+,\rho_+}(E)=E-J_{\rho_+,\rho_+}(E)$. To conclude, we notice that for any $\rho=e^{\varphi}/ (1+e^{\varphi}) \in [0,1]$,
\begin{equation*}
s(\rho)=\frac{\varphi}{1+e^{-\varphi}}-\log(1+e^{\varphi})=\frac{-\varphi}{1+e^{\varphi}}-\log(1+e^{-\varphi}).
\end{equation*}

Now, we prove the last assertion of the theorem. As above, we consider the case $\varphi_0=\varphi_+$ the other case being similar. We have to split now into two cases, whether  $E+\bar V^-> s(\rho_+)$ or $E+\bar V^-\leq s(\rho_+)$. We start by the later.

Assume  $E+\bar V^-\leq s(\rho_+)$  and let $\rho$ be a profile such that ${\bb S} (\rho) = S^{-} (E)$ and ${\bb S} (\rho) +V^- (\rho) =E+{\bar V}^-$. With the notations of Proposition \ref{prop:s-}, we have ${S}^{-} (E)= {\bb S} (\rho) = {\bb S} (H^{\prime}_{\rho}) \le {\bb S} (G^{\prime}_{\rho})$. Moreover, we have seen in the proof of Proposition \ref{lem:3123} that $({\bb S} +V^-) (G^{\prime}_{\rho}) = ({\bb S} +V^-) (\rho)$. We claim now that $\rho ={G}^{\prime}_{\rho}$. Indeed, let $(a,b) \in [-1,1]$ be a maximal interval where $G_{\rho} < H_{\rho}$ (which implies $H_{\rho} (a) =G_{\rho} (a)$ and $H_{\rho} (b) =G_{\rho}(b)$). Since $G_{\rho}$ is the convex envelope of $H_{\rho}$, it implies that $G_{\rho}$ is linear on $[a,b]$. By Jensen's inequality one has that
\begin{equation*}
\begin{split}
\int_a^b (-s) (H^{\prime}_{\rho} (x)) dx &> (b-a) (-s) \left( \frac{1}{b-a} \int_a^b H_{\rho}^{\prime} (x) dx \right)\\
&= (b-a) (-s) \left( \frac{H_{\rho} (b) -H_{\rho} (a) }{b-a} \right)= (b-a) (-s) \left( \frac{G_{\rho} (b) -G_{\rho} (a) }{b-a} \right)\\
&= \int_a^b (-s) (G^{\prime}_{\rho} (x)) dx.
\end{split}
\end{equation*}
Thus, if $H_{\rho} \ne G_{\rho}$ we can find a profile ${\tilde \rho}$ (i.e. $1-G^{\prime}_{\rho}$) which satisfies the constraint ${\bb S} ({\tilde \rho}) +V^- ({\tilde \rho}) =E+{\bar V}^-$ and such that ${\bb S} ({\tilde \rho}) > {\bb S} (\rho)=S^{-} (E)$. Since this is not possible we get that $H_{\rho} = G_{\rho}$, i.e. $\rho$ is a non-increasing profile. In particular it implies that $1-\rho =g$ where $g$ is a maximizer of  (\ref{eq:s-g}).
In the proof of Proposition \ref{prop:s-} we have seen that $S^{-} (E)= -(E+{\bar V}^-)$ corresponds to the case where the supremum $\sup_{k \in K} F(k) =0$, which is equivalent to $(x_{\pm} \mp 1)(y_{\pm} - \rho_{\pm})=0$. For such a $4$-tuple $(x_-,x_+,y_-,y_+) \in K$, a maximizer $g$ of (\ref{eq:s-g}) is then given by any non-decreasing function on $[x_-, x_+]$ taking values in $[\rho_-, \rho_+]$, constant equal to $\rho_-$ on $[-1,x_-]$ and to $\rho_+$ on  $[x_+,1]$ and such that (\ref{eq:cons1}) is satisfied. Thus the set of maximizers $\rho$ of $S^{-}(E)$, when $E +{\bar V}^- \in [s (\rho_+) , s (\rho_-)]$ is given by the set of non-increasing profiles $\rho$ such that $1-\rho_+ \le \rho \le 1-\rho_-$ and satisfying $ {\bb S} (\rho) =-(E+{\bar V}^-)$.

Now assume that $E+\bar V^-> s(\rho_+)$. In all the cases ($a_1$), ($a_2$) and (b) above, the supremum of $F$ is attained at $X_0,X_0$ and $X_m$, respectively, and  $\gamma_+=E+\bar V^-$, which implies that $x_-=x_+=-1$. Then, the function $g$ realizing the supremum $\sup_{\rho}\int_{-1}^1(-s)(g(x))dx$ with the constraint $\int_{-1}^1g(x)dx=(1-x_+)y_+$ is constant and equals to $y_+$. Therefore, the profile $u_{\rho_+}$ is given by $1-y_+$. Using the definition of $y_+$ and the fact that $\gamma_+=E+\bar V^-$, it follows that $$u_{\rho_+}\equiv\frac{\log(\rho_+)-E+\bar V^-}{\log(\rho_+)-\log(1-\rho_+)}.$$ Finally, putting together \eqref{eq:1.9}, \eqref{eq: S}, \eqref{eq:entropysimple} and the expression for $u_{\rho_+}$ we recover the expression for $S^-(E)$  that is $S^{-} (E)= S_{\rho_+,\rho_+}(-(E+\bar V^-))$.

\vspace{0.1cm}

\subsection{ The case $\rho_- < \rho_+ \leq \frac{1}{2}$:}

\vspace{2mm}

\quad

\vspace{2mm}

This case corresponds to $\varphi_- < \varphi_+ \leq 0$, therefore $m=(-s)(\rho_-)$ and $M=(-s) (\rho_+)$. Since $\gamma'(y_{\pm})=\varphi_{\pm}<0$, then both functions $\gamma_{\pm}$ are decreasing.
Notice that $\gamma_+(1)=E_{-\infty}^-+\bar{V}^-$ and $\gamma_-(0)=E_{-\infty}^++\bar{V}^-$.
Then  $-M=\gamma_+(\rho_+)\geq{\gamma_+(1)}=-\log(1+e^{-\varphi_+})$. Analogously,
$-m=\gamma_{-}(\rho_-)\leq{\gamma_-(0)}=-\log(1+e^{\varphi_-})$.
As a consequence we have the following inequalities:
\begin{equation*}
-\log (1+e^{-\varphi_+})=\gamma_+(1)\leq  \gamma_+(\rho_+)= -M < -m \le \gamma_-(\rho_-)\leq{\gamma_-(0)} =-\log (1 +e^{\varphi_-}).
\end{equation*}

These inequalities imply that $\gamma_+< \gamma_-$.
As in the previous case we have to check whether the intersection points are in the domain $D$.
At first we notice that $X_0$ belongs to $[-1,1]^2$ if and only if $\gamma_+ \le E+{\bar V}^- \le \gamma_-$.
Since  $-(E+{\bar V}^-) \notin [m,M]$ we have only to distinguish two cases: $ \gamma_+ \le E+{\bar V}^- < -M$ and  $ -m\le E+{\bar V}^- < \gamma_-$.

If $X_0$ is not in $[-1,1]^2$, i.e. $ E+{\bar V}^-> \gamma_-$ then $X_m$ and $X_M$ are not in $D$ and as a consequence $D$ is empty; and if
$ E+{\bar V}^-< \gamma_+$ then $Y_m$ and $Y_M$ are not in $D$ and as a consequence $D$ is empty.
So we restrict to $ \gamma_+ \le E+{\bar V}^- < -M$ and  $ -m\le E+{\bar V}^- < \gamma_-$. We start by the former.

\vspace{0.3cm}

 \textbf{(a) $ \gamma_+ \le E+{\bar V}^- < -M$:} In this case, a simple computation shows that $X_m$ and $X_M$ are in $D$, but $Y_m$ and $Y_M$ are not in $D$.
Then we have to compute
\[
\sup_{ (y_-,y_+) \in{\Gamma}}\Big\{F(X_0,y_-,y_+), F(X_m,y_-,y_+), F(X_M,y_-,y_+)\Big\},
\]
where $\Gamma:=\{(y_-,y_+): \gamma_+ \le E+{\bar V}^- < -M\}$.

As above, by noticing that the function $t\to - ((E+{\bar V}^-) +t)/ (\gamma_+ +t)$ is increasing, $m \leq{M}$ and $J_{\rho_+,\rho_+} (-\gamma_+) \ge 0$,
we get that $F \left(X_m, y_-, y_+ \right)\le F \left(X_M, y_-, y_+ \right)$. On the other hand
\begin{equation*}
F \left(X_0, y_-, y_+ \right)= -\cfrac{(E+{\bar V}^-) -\gamma_+ }{\gamma_--\gamma_+} J_{\rho_-,\rho_-} (-\gamma_-)-\cfrac{\gamma_- -(E+\bar{V}^-)}{\gamma_--\gamma_+} J_{\rho_+,\rho_+} (-\gamma_+).
\end{equation*}
Since the function $J_{\rho_-,\rho_-} $ is positive, $\gamma_->\gamma_+$ and  $E+\bar{V}^->\gamma_+$, we have that
\begin{equation*}
F \left(X_0, y_-, y_+ \right)\leq -\cfrac{\gamma_- -(E+\bar{V}^-)}{\gamma_--\gamma_+} J_{\rho_+,\rho_+} (-\gamma_+)=
-\cfrac{(E+\bar{V}^-)-\gamma_- }{\gamma_+-\gamma_-} J_{\rho_+,\rho_+} (-\gamma_+).
\end{equation*}
Finally , since the function $t\to  - ((E+{\bar V}^-) +t) / (\gamma_+ +t)$ is increasing, $-\gamma_-<M$ and $J_{\rho_+,\rho_+} (-\gamma_+)\geq{0}$
we obtain that $F \left(X_0, y_-, y_+ \right)\leq F(X_M,y_-,y_+)$. Since  the function $J_{\rho_+,\rho_+} $ is convex, with a unique minimum at $M$ equal to $0$, it follows that
\begin{equation*}
\begin{split}
\sup_{(y_-,y_+) \in{\Gamma}}\Big\{F(X_0,y_-,y_+), F(X_m,y_-,y_+), F(X_M,y_-,y_+)\Big\}=&\sup_{(y_-,y_+) \in{\Gamma}} F(X_M,y_-,y_+)\\
=&\sup_{\gamma_+ \le (E+ {\bar V}^-)}F(X_M,y_-,y_+)\\
=&-J_{\rho_+,\rho_+} \Big(-(E+{\bar V}^-)\Big).\\
\end{split}
\end{equation*}

\vspace{0.3cm}

 \textbf{(b) $-m < E+{\bar V}^- \le \gamma_-$:} In this case, a simple computation shows that $Y_m$ and $Y_M$ are in $D$, but $X_m$ and $X_M$ are not in $D$.
Then we have to compute
\[
\sup_{(y_-,y_+) \in{\Gamma}}\Big\{F(X_0,y_-,y_+), F(Y_m,y_-,y_+), F(Y_M,y_-,y_+)\Big\},
\]
where $\Gamma:=\{(y_-,y_+): -m < E+{\bar V}^- \le \gamma_-\}$.

As above, by noticing that the function $t\to -((E+{\bar V}^-) +t) / (\gamma_- +t)$ is increasing, $m \leq{M}$ and $J_{\rho_-,\rho_-} (-\gamma_-) \ge 0$,
we get $F \left(Y_m, y_-, y_+ \right)\le F \left(Y_M, y_-, y_+ \right).$

As above we can show that $F(X_0,y_-,y_+)\leq{F(Y_M,y_-,y_+)}$ and as a consequence
\begin{equation*}
\begin{split}
\sup_{(y_-,y_+) \in{\Gamma}}\Big\{F(X_0,y_-,y_+), F(Y_m,y_-,y_+), F(Y_M,y_-,y_+)\Big\}=&\sup_{(y_-,y_+) \in{\Gamma}} F(Y_M,y_-,y_+)\\
=&\sup_{\gamma_- \le (E+ {\bar V}^-)}F(Y_M,y_-,y_+)\\
=&-J_{\rho_-,\rho_-} \Big(-(E+{\bar V}^-)\Big).\\
\end{split}
\end{equation*}

Collecting the previous facts and by \eqref{linear entropy}, we have that the restriction of the entropy function $S^-$ to $[E_{-\infty}^- \, ; \, E_{-\infty}^+]$ is given by

\begin{equation*}
\begin{split}
&S^-(E)=\\
&=\begin{cases}
\vspace{0.2cm}
S_{\rho_+,\rho_+} \Big(-(E+{\bar V}^-)\Big),  -\log(1+e^{-\varphi_+})\leq{E+\bar{V}^-}
<\frac{-\varphi_+}{1+e^{\varphi_+}}-\log(1+e^{-\varphi_+}),\\
\vspace{0.2cm}
-(E+\bar{V}^-),  \frac{-\varphi_+}{1+e^{\varphi_+}}-\log(1+e^{-\varphi_+})\leq{E+\bar{V}^-}\leq{\frac{-\varphi_-}{1+e^{\varphi_-}}-\log(1+e^{-\varphi_-})},\\
\vspace{0.2cm}
S_{\rho_-,\rho_-} \Big(-(E+{\bar V}^-)\Big),  \frac{-\varphi_-}{1+e^{\varphi_-}}-\log(1+e^{-\varphi_-})<E+\bar{V}^-\leq{-\log (1+e^{\varphi_-})}
\end{cases}
\end{split}
\end{equation*}

Now, we prove the last assertion of the theorem. As above, we have to split into several cases, whether  $E+\bar V^-< s(\rho_+)$, $s(\rho_+)\leq E+\bar V^-\leq s(\rho_-)$ or $E+\bar V^->s(\rho_-)$. We start by the first case, the third being completely similar. Analogously to what we have done for $\rho_-\leq 1/2\leq \rho_+$, it is enough to notice that in the cases ($a$) and (b) above, the supremum is attained at $X_M$ and $Y_M$, respectively, with  $\gamma_+=E+\bar V^-$, which implies that $x_-=x_+=-1$. The rest of the argument follows as above. The second case, follows by reasoning as in the case $\rho_- \leq \frac{1}{2}\leq \rho_+$.

\vspace{0.1cm}

\subsection{  The case $\frac{1}{2}\leq\rho_- < \rho_+$:}

\vspace{2mm}

\quad

\vspace{2mm}

Repeating the same computations as performed in the previous situation, we can show that the restriction of the entropy function $S^-$ to $[E_{-\infty}^- \, ; \, E_{-\infty}^+]$ is given by

\begin{equation*}
\begin{split}
&S^-(E)=\\
&=\begin{cases}
\vspace{0.2cm}S_{\rho_-,\rho_-}\Big(-(E+{\bar V}^-)\Big),  -\log(1+e^{\varphi_-})\leq{E+\bar{V}^-}<\frac{-\varphi_-}{1+e^{\varphi_-}}-\log(1+e^{-\varphi_-}),\\
\vspace{0.2cm}
-(E+\bar{V}^-),  \frac{-\varphi_-}{1+e^{\varphi_-}}-\log(1+e^{-\varphi_-})\leq{E+\bar{V}^-}\leq{\frac{-\varphi_+}{1+e^{\varphi_+}}-\log(1+e^{-\varphi_+})}\\
\vspace{0.2cm}
S_{\rho_+,\rho_+}\Big(-(E+{\bar V}^-)\Big),  \frac{-\varphi_+}{1+e^{\varphi_+}}-\log(1+e^{-\varphi_+})
<E+\bar{V}^-\leq{-\log(1+e^{-\varphi_+})}.
\end{cases}
\end{split}
\end{equation*}

We notice that to prove the last assertion of the theorem is is enough to invert the role of $\rho_-$
 with $\rho_+$ in the proof of the previous case.

\section{Proof of Theorem \ref{th:pressure-cooperative}}\label{sec:A4}

To prove this theorem we compute explicitly the Legendre transform of $P^-$, namely, ${\tilde S}^- (E)=\inf_{\theta \in \RR} \left\{ \theta E -P^{-} (\theta) \right\}$ and we show that it coincides with the expression for $S^{-} (E)$ obtained in the previous section. Since the Legendre transform is a one to one correspondence between concave functions, this is sufficient to conclude. We denote by $P_0$ the function defined by ${ P}_0 (\theta) = -\theta \log (m(\theta))$.

\subsection{The case $\rho_- \leq \frac{1}{2} \leq  \rho_+$:}

\vspace{2mm}

\quad

\vspace{2mm}

This case corresponds to $\varphi_- \leq 0 \leq  \varphi_+$. Recall that ${\varphi}_0 = \sup(|\varphi_-|, |\varphi_+|)$.
Since
\begin{equation*}
m(\theta)=\min (f_{\theta} (\varphi_{-}), f_{\theta} ({\varphi_+}))=
\begin{cases}
\vspace{1mm}
f_{\theta} (\varphi_0), \quad \theta <-1,\\
\vspace{1mm}
1, \quad \theta=-1,\\
\vspace{1mm}
f_{\theta} (0), \quad \theta>-1,
\end{cases},
\end{equation*}
we have that
\begin{equation*}
{P} _0 (\theta) =
 \begin{cases}
 \vspace{1mm}
  -\theta \log (1+e^{-{\varphi}_0}) - \log (1+e^{\theta {\varphi}_0}), \quad \theta <-1,\\
 \vspace{1mm}
 0, \quad \theta=-1,\\
 \vspace{1mm}
 - (\theta+1)\log 2 , \quad \theta >-1.
\end{cases}
\end{equation*}
As a consequence
\begin{equation*}
{P}' _0 (\theta) =
 \begin{cases}
 \vspace{1mm}
-\log (1+e^{-{\varphi}_0}) - \cfrac{\varphi_0}{1+e^{-\theta \varphi_0}}, \quad \theta <-1,\\
 \vspace{1mm}
 -\log(2) , \quad \theta >-1.
 \end{cases}
\end{equation*}

The function ${ P}_0$ is differentiable everywhere except for $\theta=-1$.  A simple computation show that $P_0'$ is decreasing in $(-\infty,-1)$.
 We have that
\begin{equation*}
\begin{split}
&P_0'(-1^-)=\lim_{\substack{\theta \to -1\\ \theta <-1}}{P}_0^{\prime} (\theta)=-\log (1+e^{-{\varphi}_0})-\cfrac{\varphi_0}{1+e^{\varphi_0}},\\
 &P_0'(-\infty)=\lim_{\theta \to -\infty} { P}_0^{\prime} (\theta)=- \log(1+e^{-\varphi_0}).\\
\end{split}
\end{equation*}
Observe that $P_0'(-\infty)>P_0'(-1^-)>- \log(2)$,
which is a consequence of the function $t\to -\log(1+e^{-t}) -t/(1+e^{t})$ being decreasing on $(-\infty,0]$. This implies that the function ${P}_0$ is concave. Now we compute ${\tilde S}^-(E)$. By the previous observations we have that
\begin{equation*}
{\tilde S}^-(E)=\inf\Big\{\inf_{\theta>-1}\{\theta (E+\bar{V}^-)-P_0(\theta)\},-(E+\bar{V}^-),\inf_{\theta<-1}\{\theta (E+\bar{V}^-)-P_0(\theta)\}\Big\}.
\end{equation*}
Now, for $\theta>-1$ we have that
\begin{equation*}
\inf_{\theta>-1} \Big\{\theta (E+\bar{V}^-)-P_0(\theta)\}=\log(2)+\inf_{\theta>-1}\{\theta ((E+\bar{V}^-)+\log(2))\Big\}.
\end{equation*}
This equals to $-(E+\bar{V}^-)$ if $(E+\bar{V}^-)>-\log(2)$ and equals $-\infty$ if $(E+\bar{V}^-)<-\log(2)$. On the other hand, for $\theta<-1$ we have that
\begin{equation*}
\inf_{\theta<-1}\Big\{\theta (E+\bar{V}^-)-P_0(\theta)\}=\inf_{\theta<-1}\{\theta (E+\bar{V}^-)-P(-\varphi_0,-\theta)\Big\},
\end{equation*}
where $P(\cdot,\cdot)$ is defined in \eqref{Pfunction}. Let $I_1:= \Big[\frac{-\varphi_0}{1+e^{\varphi_0}}-\log (1+e^{-\varphi_0}) \; ; \; - \log (1+e^{-\varphi_0})\Big]  \subset (-\infty,0).$ We have that
\begin{equation*}
\begin{split}
\inf_{\theta \in \RR} \Big \{\theta (E+\bar{V}^-) -P(-\varphi_0, -\theta) \Big\}&=\frac{1}{2}\inf_{\theta \in \RR} \Big\{ \theta 2(E+\bar{V}^-) -2P(-\varphi_0, -\theta) \Big\}\\
&=\frac{1}{2}\inf_{\theta \in \RR} \Big\{ \theta (-2(E+\bar{V}^-)) -2P(-\varphi_0, \theta) \Big\}\\&=-s\left(\cfrac{-(E+\bar{V}^-) -\log(1+e^{-\varphi_0})}{\varphi_0}\right)\\
&=S_{\rho_0,\rho_0} \Big(-(E+\bar{V}^-)\Big),
\end{split}
\end{equation*}
for any $E+\bar{V}^-\in I_1$. Then, we conclude that
\begin{equation*}
\inf_{\theta>-1}\Big\{\theta (E+\bar{V}^-)-P_0(\theta)\Big\}=S_{\rho_0,\rho_0}\Big(-(E+\bar{V}^-)\Big),
\end{equation*}
for $E+\bar{V}^-\in I_1$.
Now we look for ${\tilde S}^-(E)$ for $E+\bar{V}^-$ outside $I_1$. Let $I_1^-:=\Big(-\infty\, ;\,\frac{-\varphi_0}{1+e^{\varphi_0}}-\log (1+e^{-\varphi_0})\Big) $ and $I_1^+:=\Big(- \log (1+e^{-\varphi_0})\, ;\,+\infty\Big)$. Then, if $E+\bar{V}^-\in{I_1^-}$, we have that $sign(E+\bar{V}^--P_0'(\theta))=sign((E+\bar{V}^-)-P_0'(-\infty))=sign((E+\bar{V}^-)+\log(1+e^{-\varphi_0}))<0$, and as a consequence the infimum is attained at $\theta=-1$ and in this case ${\tilde S}^-(E)=-(E+\bar{V}^-)$. On the other hand if $E+\bar{V}^-\in{I_1^+}$, we have that $sign(E+\bar{V}^--P_0'(\theta))=sign((E+\bar{V}^-)-P_0'(-\infty))>0$, and as a consequence the infimum is attained at $\theta=-\infty$ and in this case ${\tilde S}^-(E)=-\infty$.

Finally we conclude that ${\tilde S}^-$ when restricted to the energy band $[E_{-\infty}^- \, ; \, E_{-\infty}^+]$ is given by
\begin{equation*}
\begin{split}
&{\tilde S}^-(E)=\\
&=\begin{cases}
\vspace{0.2cm}
-(E+\bar{V}^-), \quad -\log(2)\leq{E+\bar{V}^-}\leq{\frac{-\varphi_0}{1+e^{\varphi_0}}-\log (1+e^{-\varphi_0})},\\
\vspace{0.2cm}
S_{\rho_0,\rho_0}\Big(-(E+\bar{V}^-)\Big), \quad \frac{-\varphi_0}{1+e^{\varphi_0}}- \log (1+e^{-\varphi_0})<E+\bar{V}^-\leq{- \log (1+e^{-\varphi_0})}
\end{cases}
\end{split}
\end{equation*}
and equal to $-\infty$ outside the energy band. Thus ${\tilde S}^-$ coincides with ${S}^-$.

\subsection{ The case $\rho_-<\rho_+\leq \frac{1}{2}$:}

\vspace{2mm}

\quad

\vspace{2mm}

 In this case we have that
\begin{equation*}
m(\theta)=\min (f_{\theta} (\varphi_{-}), f_{\theta} ({\varphi_+}))=
\begin{cases}
\vspace{1mm}
f_{\theta} (\varphi_-), \quad \theta <-1,\\
\vspace{1mm}
1, \quad \theta=-1,\\
\vspace{1mm}
f_{\theta} (\varphi_+), \quad \theta>-1,
\end{cases}
\end{equation*}
and as a consequence
\begin{equation*}
{ P}_0'(\theta)=
\begin{cases}
\vspace{1mm}
-\cfrac{\varphi_-}{1+e^{-\theta \varphi_-}} -\log(1+e^{-\varphi_-}),\quad \theta <-1,\\
\vspace{1mm}
-\cfrac{\varphi_+}{1+e^{-\theta \varphi_+}}-\log(1+e^{-\varphi_+}), \quad \theta>-1.
\end{cases}
\end{equation*}
The function ${ P}_0$ is differentiable everywhere except for $\theta=-1$.  We have that ${P}_0^{\prime}$ is decreasing on $(-\infty, -1)$ and
\begin{equation*}
 P_0'(-\infty)>P_0'(-1^-)=\lim_{\substack{\theta \to -1\\ \theta <-1}} P_0^{\prime} (\theta) >  \lim_{\substack{\theta \to -1\\ \theta >-1}} P_0^{\prime}, (\theta)=P_0'(-1^+)>P_0'(+\infty)
\end{equation*}
which is a consequence of the function $t\to \log(1+e^{-t}) +t/(1+e^{t})$ being increasing on $(-\infty,0)$. This implies that the function ${P}_0$ is concave. Now we compute ${{\tilde S}}^-(E)$. From the previous observations it follows that
\begin{equation*}
{\tilde S}^-(E)=\inf\Big\{\inf_{\theta>-1}\{\theta (E+\bar{V}^-)-P_0(\theta)\},-(E+\bar{V}^-),\inf_{\theta<-1}\{\theta (E+\bar{V}^-)-P_0(\theta)\}\Big\}.
\end{equation*}
We start by the case $\theta>-1$. Since $P_0$ is concave, then $P_0'$ is decreasing. A simple computation shows that $P'_0(\theta)=E+\bar{V}^-$ for
\[
\theta(E):=-\frac{1}{\varphi_+}\log\Big(\frac{-\varphi_+-(E+\bar{V}^-)-\log(1+e^{-\varphi_+})}{E+\bar{V}^-+\log(1+e^{-\varphi_+})}\Big).
\]

Since $P_0'$ is decreasing, its image is given by $I_1:=[P_0'(+\infty)\, ; \,P_0'(-1)]$, that is $I_1=[-\log(1+e^{-\varphi_+})\, ;\, \frac{-\varphi_+}{1+e^{\varphi_+}}-\log(1+e^{-\varphi_+})].$
Then, for $E+\bar{V}^-\in{I_1}$ we have that
\begin{equation*}
{\tilde S}^-(E)=\theta(E)(E+\bar{V}^-)-P_0(\theta(E))=S_{\rho_+,\rho_+}\Big(-(E+\bar{V}^-)\Big).
\end{equation*}
Now we look for ${\tilde S}^-(E)$ for $E+\bar{V}^-$ outside $I_1$. Let $I_1^-:=(-\infty\, ; \,-\log(1+e^{-\varphi_+}))$
and $I_1^+:=( \frac{-\varphi_+}{1+e^{\varphi_+}}-\log(1+e^{-\varphi_+})\, ; \, +\infty)$.
Then, if $E+\bar{V}^-\in{I_1^-}$, we have that $sign(E+\bar{V}^--P_0'(\theta))=sign(E+\bar{V}^--P_0'(+\infty))=sign(E+\bar{V}^-+\log(1+e^{-\varphi_+}))<0$, and as a consequence the infimum is attained at $\theta=+\infty$ and in this case ${\tilde S}^-(E)=-\infty$. On the other hand if $E+\bar{V}^-\in{I_1^+}$, we have that $sign(E+\bar{V}^--P_0'(\theta))=sign(E+\bar{V}^--P_0'(+\infty))>0$, and as a consequence the infimum is attained at $\theta=-1$ and in this case ${\tilde S}^-(E)=-\infty$.

Now we look to the case $\theta<-1$. Since $P_0$ is concave, then $P_0'$ is decreasing. A simple computation shows that $P'_0(\theta)=E+\bar{V}^-$ for
\[
\theta(E):=-\frac{1}{\varphi_-}\log\Big(\frac{-\varphi_--(E+\bar{V}^-)-\log(1+e^{-\varphi_-})}{E+\bar{V}^-+\log(1+e^{-\varphi_-})}\Big).
\]
Since $P_0'$ is decreasing, its image is given by $I_2:=[P_0'(-1)\, ; \, P_0'(-\infty)]$, that is $I_2=[\frac{-\varphi_-}{1+e^{\varphi_-}}-\log(1+e^{-\varphi_-})\, ;\, -\log(1+e^{\varphi_-})].$
Then, for $E+\bar{V}^-\in{I_2}$ we have that ${S}^-(E):=\theta(E)(E+\bar{V}^-)-P_0(\theta(E))$ which can be written as  $S_{\rho_-,\rho_-}\Big(-(E+\bar{V}^-)\Big)$.
Now we look for ${\tilde S}^-(E)$ for $E+\bar{V}^-$ outside $I_2$. Let $I_2^-:=(-\infty\, ; \, \frac{-\varphi_-}{1+e^{\varphi_-}}-\log(1+e^{-\varphi_-}))$
and $I_2^+:=(-\log(1+e^{\varphi_-})\, ;\, +\infty)$.
Then, if $E+\bar{V}^-\in{I_2^-}$, we have that $sign(E+\bar{V}^--P_0'(\theta))=sign(E+\bar{V}^--P_0'(-\infty))=sign(E+\bar{V}^-+\log(1+e^{\varphi_-}))<0$, and as a consequence the infimum is attained at $\theta=-1$ and in this case ${\tilde S}^-(E)=-(E+\bar{V}^-)$. On the other hand if $E+\bar{V}^-\in{I_2^+}$, we have that $sign(E+\bar{V}^--P_0'(\theta))=sign(E+\bar{V}^--P_0'(\theta))>0$, and as a consequence the infimum is attained at $\theta=-\infty$ and in this case ${\tilde S}^-(E)=-\infty$.

Now, a simple computation shows that the function $t\to -\log(1+e^{-t}) -t/(1+e^{t})$ is decreasing on $(-\infty,0]$ so that the sets $I_1$ and $I_2$ do not intersect. The restriction of the function ${\tilde S}^-$ to $[E_{-\infty}^- \, ; \, E_{-\infty}^+]$ has the expression:
\begin{equation*}
\begin{split}
&{\tilde S}^-(E)=\\
&=\begin{cases}
\vspace{0.2cm}
S_{\rho_+,\rho_+}\Big(-(E+\bar{V}^-)\Big), \quad -\log(1+e^{-\varphi_+})\leq{E+\bar{V}^-}<\frac{-\varphi_+}{1+e^{\varphi_+}}-\log(1+e^{-\varphi_+}),\\
\vspace{0.2cm}
-(E+\bar{V}^-), \quad \frac{-\varphi_+}{1+e^{\varphi_+}}-\log(1+e^{-\varphi_+})\leq{E+\bar{V}^-}\leq{\frac{-\varphi_-}{1+e^{\varphi_-}}-\log(1+e^{-\varphi_-}),}\\
\vspace{0.2cm}
S_{\rho_-,\rho_-}\Big(-(E+\bar{V}^-)\Big), \quad \frac{-\varphi_-}{1+e^{\varphi_-}}-\log(1+e^{-\varphi_-})<E+\bar{V}^-\leq{-\log(1+e^{\varphi_-})}
\end{cases}
\end{split}
\end{equation*}
and is equal to $-\infty$ outside the energy band. Thus ${\tilde S}^-$ coincides with ${S}^-$.

\subsection{ The case $\frac{1}{2}\leq \rho_-<\rho_{+}$:}

\vspace{2mm}

\quad

 \vspace{2mm}

 In this case, by inverting the role of $\rho_-$ with $\rho_+$ and of $\varphi_-$ with $\varphi_+$ in the previous case, we obtain  that  ${\tilde S}^-$ restricted to $[E_{-\infty}^- \, ; \, E_{-\infty}^+]$ is given by
\begin{equation*}
\begin{split}
&{\tilde S}^-(E)=\\
&=\begin{cases}
\vspace{0.2cm}
S_{\rho_-,\rho_-}\Big(-(E+\bar{V}^-)\Big), \quad -\log(1+e^{\varphi_-})\leq{E+\bar{V}^-}<\frac{-\varphi_-}{1+e^{\varphi_-}}-\log(1+e^{-\varphi_-}),\\
\vspace{0.2cm}
-(E+\bar{V}^-), \quad \frac{-\varphi_-}{1+e^{\varphi_-}}-\log(1+e^{-\varphi_-})\leq{E+\bar{V}^-}\leq{\frac{-\varphi_+}{1+e^{\varphi_+}}-\log(1+e^{-\varphi_+})},\\
\vspace{0.2cm}
S_{\rho_+,\rho_+}\Big(-(E+\bar{V}^-)\Big), \quad \frac{-\varphi_+}{1+e^{\varphi_+}}-\log(1+e^{-\varphi_+})<E+\bar{V}^-\leq{-\log(1+e^{-\varphi_+})}
\end{cases}
\end{split}
\end{equation*}
and is equal to $-\infty$ outside the energy band. Thus ${\tilde S}^-$ coincides with ${S}^-$.

\vspace{0.5cm}
\noindent{\bf Acknowledgments.}\\
The authors are very grateful to Christophe Bahadoran and Bernard Derrida for very useful discussions. They acknowledge the support of  \' Egide (France) and FCT (Portugal) through the research project "Fluctuations of weakly and strongly asymmetric systems" no. FCT/1560/25/1/2012/S. CB acknowledges the support of the French Ministry of  Education through the grants ANR-10-BLAN 0108 (SHEPI).  PG thanks FCT for support through the research project PTDC/MAT/109844/2009 and the Research Centre of Mathematics of the University of Minho, for the financial support provided by "FEDER" through the "Programa Operacional Factores de Competitividade - COMPETE" and FCT through the research project PEst-C/MAT/UI0013/2011.

\end{document}